\documentclass[english]{amsart}

\usepackage{ulem}
\usepackage{graphicx}
\usepackage{amsmath,amsfonts,amsthm}
\usepackage{amssymb}
\usepackage{multirow,array}
\usepackage{comment}
\usepackage{enumitem}
\usepackage{indentfirst}
\usepackage{emptypage}
\usepackage{hyperref}
\usepackage{color}
\usepackage{mathrsfs}

\usepackage[abbrev,msc-links,alphabetic]{amsrefs}

\usepackage{url}

\usepackage[T1]{fontenc}
\usepackage{marvosym}
\usepackage{babel}
\usepackage{amsmath,amsfonts,amsthm}%
\usepackage{amsmath}%
\usepackage{amsfonts}
\usepackage{amssymb}%
\usepackage{graphicx}

\usepackage{shuffle}
\usepackage{tikz}
\usepackage{mhequ}

\usepackage[vscale=0.75]{geometry}
\usetikzlibrary{arrows,snakes,backgrounds}

\usetikzlibrary{snakes}
\usetikzlibrary{decorations}
\usetikzlibrary{positioning}
\usetikzlibrary{shapes}

\newcommand{\ome}{\omega}

\newcommand{\vep}{\varepsilon}

\newcommand{\be}{\begin{equation}}
\newcommand{\ee}{\end{equation}}
\newcommand{\bea}{\begin{eqnarray}}
\newcommand{\eea}{\end{eqnarray}}

\newcommand{\beaa}{\begin{eqnarray*}}
\newcommand{\eeaa}{\end{eqnarray*}}

\newcommand{\gam}{\gamma}

\newcommand{\MH}{\mathcal{H}}
\newcommand{\MF}{\mathcal{F}}

\newcommand{\MC}{\mathcal{C}}
\newcommand{\ML}{\mathcal{L}}

\newcommand{\MZ}{\mathcal{Z}}

\newcommand{\MA}{\mathcal{A}}
\newcommand{\ME}{\mathcal{E}}
\newcommand{\MS}{\mathcal{S}}
\newcommand{\MM}{\mathcal{M}}
\newcommand{\MU}{\mathcal{U}}

\newcommand{\BM}{\mathbb{M}}

\newcommand{\PP}{\mathbb{P}}

\newcommand{\bv}{\bar{v}}

\newcommand{\ty}{\tilde{y}}
\newcommand{\tz}{\tilde{z}}

\newcommand{\tF}{\tilde{F}}

\newcommand{\tTheta}{\tilde{\Theta}}

\newcommand{\by}{\bar{y}}
\newcommand{\bz}{\bar{z}}

\newcommand{\bu}{\bar{u}}

\newtheorem{thm}{Theorem}[section]
\newtheorem{assumption}[thm]{Assumption}
\newtheorem{lem}[thm]{Lemma}
\newtheorem{coro}[thm]{Corollary}
\newtheorem{rem}[thm]{Remark}
\newtheorem{prop}[thm]{Proposition}
\newtheorem{example}[thm]{Example}
\newtheorem{defi}[thm]{Definition}

\usepackage{lineno}
 
\usepackage[textsize=tiny]{todonotes}

\begin{document}

\title[mf equilibria via mf-qBSDEs]{Mean-field quadratic BSDEs and related mean-field portfolio games of controls}

\author{Huilin Zhang}
\address{Research Center for Mathematics and Interdisciplinary Sciences, Shandong University, Binhai Road 72, 266237, Qingdao, PRC;Department of Mathematics, Humboldt University Berlin, Berlin, Germany.}
\email{huilinzhang@sdu.edu.cn}

\begin{abstract}
We study a new class of mean-field quadratic backward stochastic differential
equations (qBSDEs) arising from mean-field portfolio games with exponential utility. Typical examples of such games include a mean-field portfolio game with price impact, and a finite-contract pricing model 
with market clearing conditions. Generators of these mean-field qBSDEs contain quadratic terms $\mathbb{E}[Z]^{\top} Z$ and $|\mathbb{E}[Z]|^2$,
instead of the classical pathwise $Z^\top Z$ term. 
We
prove local well-posedness under $L^q$-integrability assumptions on terminals and their Malliavin derivatives and global well-posedness under an extra exponential integrability condition on the Malliavin
derivatives. Then we  show the existence and uniqueness of global equilibria of the above two mean-field games via our qBSDE theory.

\end{abstract}

\subjclass{93E20, 91A16, 60H30 }
\thanks {\textit{JEL classification.} C73, G11.}

\keywords{quadratic backward stochastic differential equations, mean-field games, Malliavin calculus, price impact, market clearing}

\maketitle

\tableofcontents

\setlength{\abovedisplayskip}{1.5pt plus 1pt minus 0.5pt}
\setlength{\belowdisplayskip}{1.5pt plus 1pt minus 0.5pt}
\setlength{\abovedisplayshortskip}{0pt plus 0.75pt}
\setlength{\belowdisplayshortskip}{1pt plus 0.75pt minus 0.5pt}
\setlength{\jot}{1pt}
\setlength{\arraycolsep}{3.5pt}

%
%

%

\newcommand{\E}{\mathbb{E}}
\newcommand{\tE}{\tilde{\mathbb{E}}}
\newcommand{\R}{\mathbb{R}}
\newcommand{\N}{\mathbb{N}}
\newcommand{\Oc}{ \Omega^c}
\newcommand{\Od}{ \Omega^D}

\newcommand{\D}{\mathbb{D}}
\newcommand{\X}{\mathbb{X}}
\newcommand{\T}{\mathbb{T}}
\newcommand{\bS}{\mathbb{S}}

\newcommand{\BX}{\mathbf{X}}
\newcommand{\bP}{\mathbb{P}}
\newcommand{\bL}{\mathbb{L}}

\newcommand{\tK}{\tilde{{K}}}

\newcommand{\FC}{\mathscr{C}}
\newcommand{\FF}{\mathscr{F}}
\newcommand{\B}{\mathbb{B}}
\newcommand{\BB}{\mathbf{B}}
\newcommand{\op}{\mathcal{P}}
\newcommand{\FD}{\mathscr{D}}
\newcommand{\oq}{\mathcal{Q}}
\newcommand{\oor}{\mathcal {R}}

\newcommand{\BI}{\mathbf{1}}
\newcommand{\BZ}{\mathbf{Z}}
\newcommand{\I}{\mathcal{I}}
 \newcommand{\gl}{{\gamma_t, \eta_t}}

\newcommand{\Z}{\mathbb{Z}}
\newcommand{\M}{\mathbb{M}}
\newcommand{\bx}{\mathbf{x}}
\newcommand{\s}{\mathbb{S}}
\newcommand{\A}{\mathbb{A}}
\newcommand{\LL}{\mathbb{L}}
\newcommand{\tiop}{\tilde{\mathcal{P}}}
\newcommand{\BBI}{\mathbf{I}}
\newcommand{\BY}{\mathbf{Y}}
\newcommand{\FKF}{\mathfrak{K}_F}

\newcommand{\Bf}{\mathbf{f}}
\newcommand{\TSM}{[0,T] \times \D \times \op^D_2}

\newcommand{\Hb}{\MH^2_\text{BMO}}
\newcommand{\BMO}{ \text{BMO}}
\newcommand{\lef}{\left}
\newcommand{\rig}{\right}

\newcommand{\yone}{y^{(1)}}
\newcommand{\zone}{z^{(1)}}
\newcommand{\ytwo}{y^{(2)}}


\section{Introduction}

\subsection{A sketch of mean-field games and models of interest}

Mean-field games (MFGs) are introduced independently by Huang, Malham\'e and
Caines \cite{HuangMalhameCaines06} and by Lasry and Lions
\cite{LasryLions07} to describe Nash equilibria in large populations of
weakly interacting players.  The central reduction is to replace the
multiple-player game by a representative-player control problem with a 
measure flow fixed first and then impose a consistency condition between
that flow and the law generated by the optimal system. In the standard
formulation, players interact primarily through the distribution of their
states.  There are two complementary approaches to this fixed-point problem.
The first is the PDE approach, where one typically works with Markovian controls and characterizes the equilibrium through a second-order PDE on the Wasserstein space, known as the {\it master equation}; see, among many others, Cardaliaguet et al. \cite{cardaliaguet2019master} and Gangbo et al. \cite{GangboMeszarosMouZhang2022}. The second is the probabilistic approach, where the equilibrium is characterized through McKean--Vlasov forward-backward stochastic differential equations (FBSDEs) via stochastic maximum principle; see, for example, Bensoussan, Yam, and Zhang \cite{Ben-Yam-Zhang15}, Carmona and Delarue \cite{carmona-delarue13}, Buckdahn et al. \cite{BuckdahnDjehicheLiPeng09}, and Lauri\`ere and Tangpi \cite{LauriereTangpi22}. Comprehensive description of both
viewpoints and further applications of MFGs can be refered to monographs 
\cite{CarmonaDelarue18I,CarmonaDelarue18II,CardaliaguetPorretta20}.


The theory is subsequently extended from interactions through the state variables to interactions through the players' controls, or, more generally, through the joint distribution of states and controls.
Gomes, Patrizi, and Voskanyan  \cite{GomesPatriziVoskanyan14} establishes classical solvability for stationary extended mean-field game systems, while Gomes and Voskanyan \cite{GomesVoskanyan16} proves existence and give uniqueness conditions for deterministic extended mean-field games and study the associated master equation. Cardaliaguet and Lehalle \cite{CardaliaguetLehalle18} develops an optimal liquidation model with trade crowding. On the probabilistic side, Lauri\`ere and Tangpi \cite{LauriereTangpi22} obtains moment bounds for the convergence of finite-player Nash equilibria via forward-backward  propagation of chaos; Djete \cite{Djete23} characterizes measure-valued solutions as limits of approximate Nash equilibria; Possama\"i and Tangpi \cite{PossamaiTangpi25} derive non-asymptotic convergence rates using a McKean--Vlasov BSDE characterization in a weak formulation.

Our first model concerns an exponential utility portfolio game with price
impact. Price impact is central to optimal execution, liquidity crises,
portfolio liquidation, and trading under factor uncertainty; see
\cite{AlmgrenChriss01,CarlinLoboViswanathan07,HorstXiaZhou22}. Particularly, in the $N$-player formulation we consider, investor $i$ chooses its position
$\alpha_t^i$ in an risky asset with  idiosyncratic noise. The price impact is characterized by the empirical mean
\[
             m_t^N=\frac1N\sum_{j=1}^N\alpha_t^j
\]
which then influences the instantaneous return for every investor (see Section \ref{sec:price-impact-model} for details). The above model reveals the fact that the influence from each player is minor but cumulative. In the mean-field limit model, a
representative investor therefore treats a deterministic influence flow $m$
as given first, and then maximizes exponential utility of terminal wealth with a random
liability. Equilibrium requires that the prescribed flow coincides with
the mean of the optimal control $\widehat\alpha$, i.e. $m_t=\E[\widehat\alpha_t]$. 



Our model differs from earlier works in the following senses.
Carmona and Lacker \cite{CarmonaLacker15} develops a general MFGs of controls in the weak formulation, and moreover, controls are compact and the equilibrium can be characterized by
BSDEs with Lipschitz generators, in contrast to our strong formulation and mean-field qBSDEs here. Cardaliaguet and
Lehalle \cite{CardaliaguetLehalle18} study the strong formulation but with
deterministic coefficients, where the equilibrium is described through systems of PDEs. Djete \cite{Djete23} allows general  random
state-and-control interactions under additional boundedness and tightness
conditions, which we do not assume here.  Another related topici is the portfolio games with relative
performance, which are introduced by Espinosa and Touzi \cite{EspinosaTouzi15}. The mean-field model with relative performance is studied in Fu and Zhou \cite{FZ23}; while the finite-player
price impact games with relative investors are studied in B\"auerle and G\"oll \cite{BaeuerleGoll24}.

Our second model is an intermediated market for $d$ types of client-specific
risk-transfer contracts. A default-free intermediary quotes a common premium
within each type and takes the opposite side of every contract, whose payments
depend on the client's observable idiosyncratic risks. Clients maximize
exponential utility over signed positions of all contracts, and the quote is chosen so their
aggregate demand meets the intermediary's prescribed supply. 
The model shares
the endogenous-pricing mechanism of \cite{Radner68,Radner72}. For the multiple-player model, the common random quote couples clients, leading to a system of quadratic
BSDEs. Related equilibrium formulations using quadratic BSDEs include
Kardaras, Xing, and {\v Z}itkovi{\'c} \cite{KardarasXingZitkovic22}; Weston and {\v Z}itkovi{\'c} \cite{WestonZitkovic20}; Escauriaza, Schwarz, and Xing \cite{EscauriazaSchwarzXing22}. Fujii and Sekine \cite{FujiiSekine25,FujiiSekineHabit24} studies a
mean-field equilibrium pricing model with risky stocks driven by a common
Brownian noise and with agents' liabilities additionally exposed to
idiosyncratic Brownian noise. They formulate an exact market clearing condition
for their multiple-player model, but show that the risk premium process obtained from the
mean-field BSDE clears the market only in the asymptotic sense.
In our model, the representative client is driven by idiosyncratic
noise, and the equilibrium quote satisfies the mean-field clearing condition. For the multiple-player equilibria clear exactly when they exist; their convergence requires additional assumptions (see Remark~\ref{rem:pricing-finite-convergence} for details).


\subsection{qBSDEs--old, new and mean-field}

The qBSDE studied in this paper emerges when we solve the
mean-field games of controls from the above two models. For any fixed law of actions $m$, by the martingale optimality
argument, in the spirit of \cite{HuImkellerMuller05}, the feedback
control has the representation 
\[
 \widehat\alpha_t
 =\frac{Z_t}{\sigma_t}+\frac{b_t+m_t}{\eta\sigma_t^2},
\]
where $(b, \sigma)$ are stochastic coefficients from the risky asset, $\eta$ is the risk aversion parameter from the exponential utility function, and $Z$ is the solution to the BSDE when $m$ fixed (see e.g. \cite{Pham09} or Section \ref{sec:mf-martingale-rep} for the martingale optimization argument). Then by the consistency condition  
$m_t=\E[\widehat\alpha_t]$ and taking the expectation on both sides of the above identity, we have
$$
m_t= \left(1-\E[\frac{1}{\eta \sigma^2_t} ] \right)^{-1} \left(\E[\frac{Z_t}{\sigma_t}] + \E[\frac{b_t}{\eta \sigma_t^2}] \right).
$$
It follows that $m_t$ is linear in $\E[Z_t]$ when $\sigma$ deterministic, which later leaves quadratic terms of $\E[Z]^\top Z$ and $|\E[Z]|^2$ in the BSDE. The same structure arises in our second model. These examples lead us to study the mean-field BSDEs of the form
\be\label{intro-general-bsde}
Y_t=\xi+\int_t^T H_r\big(Y_r,Z_r,\ML(Y_r),\ML(Z_r),
G_r(\ML(Z_r))Z_r\big)dr-\int_t^T Z_r\,dB_r,
\ee
where $H$ is Lipschitz in $G_r(\ML(Z_r))Z_r$ and quadratic in $\ML(Z_r)$.

Linear BSDEs are introduced by Bismut \cite{Bismut73} in the study of stochastic control. Then the nonlinear theory is initiated by Pardoux and Peng \cite{PP90} and has
since become a basic tool in stochastic control, mathematical finance, and
continuous-time contract theory, as well as nonlinear PDE theory; see e.g. \cite{KPQ97,Pham09,CvitanicZhang13}. Quadratic BSDEs are
especially important for mathematical finance and physics. A major breakthrough in the one-dimensional case is achieved by Kobylanski \cite{Kobylanski00}, where a well-posedness result is obtained under the bounded terminal condition. Subsequent results further consider other cases such as unbounded terminal
values, convex generators, delayed equations, bounded Malliavin derivatives etc.; see e.g. Briand and Hu \cite{BriandHu2006PTRF,BriandHu2008PTRF}, Briand and Elie \cite{BriandElie2013SPA}, Delbaen et al. \cite{DelbaenHuRichou2011AIHPPS,DelbaenHuRichou2015DCD}, Barrieu and El Karoui \cite{BarrieuElKaroui2013AoP}, Tevzadze \cite{Tev08}, and Harter and Richou \cite{HarterRichou19}.

The multidimensional problem is substantially more delicate and the general case remains open  (see Peng \cite{Peng1999}, Espinosa and Touzi \cite{EspinosaTouzi15}, and Jackson \cite{Jack23}). The counterexample of Frei and dos Reis
\cite{FreiDosReis11} shows that a general multidimensional quadratic system
need not possess a solution, implying the necessity of some structure or
smallness assumptions. Solvability is
known for special structures, including the classes of Cheridito and Nam
\cite{CheriditoNam15}, diagonally quadratic systems
\cite{HuTang16,FanHuTang23}, Markovian cases \cite{XingZitkovic18}, and triangular systems
\cite{Luo20,JacksonZitkovic22,FanHuTang25}. In the recent work \cite{HorstSchmidekZhang26}, authors study a  weakly interactive case, where they show the well-posedness under a smallness assumption but independent of dimension, for the purpose of passing to the mean-field limit. More precisely, they prove multiple-player convergence under bounded data and a smallness condition. However, in this paper we study the limit for unbounded data, using polynomial Malliavin assumptions locally and entropy estimates globally, without a smallness condition on the interaction.

Mean-field BSDEs, in which the generator depends on the law of the solution,
arise from limits of weakly interacting backward systems in Buckdahn et al.
\cite{BuckdahnDjehicheLiPeng09}.  Their quadratic variants are subsequently
studied under several forms: Hao, Wen and Xiong 
\cite{HaoWenXiong22} considers a general quadratic system under smallness condition on the terminal condition; Hibon Hu and Tang \cite{HibonHuTang23} studies the local solution when the driver is quadratic in $Z$ but subquadratic in $\E[Z]$; the recent work Hao et al. \cite{HaoHuTangWen25} largely extends earlier local results to global in several cases. Via a $Y$ independent and separable form, they allow a quadratic term like $|\E[Z]|^2$, while the separable structure excludes the mixed $Z\E[Z]$ terms. More recently, Ding, Nam and Wen \cite{DingNamWen25} studies the case when generators have general quadratic interaction between $Z$ and an independent copy of $Z$. In their formulation, they allow a term of $Z \E[Z]$, but use bounded terminal data, with structural regularity in the main theorems and a smallness condition on the centered terminal value in the extension without this regularity. Moreover, the other quadratic term of kind $|\E[Z]|^2$ seems out of reach.

This paper tackles the challenging case where the quadratic part comes from $\E[Z]$. Unlike earlier works on mean-field qBSDEs, which are motivated by classical qBSDEs and focus on the quadratic growth in $Z$, the quadratic terms in the mean-field limiting system comes from a weak interaction of controls from all players, and thus are represented by a generator quadratic in $\E[Z]$. More precisely, under suitable propagation of chaos and stability conditions, the empirical interactions in the multiple-player equilibrium are expected to converge to deterministic first moment terms involving $\E[Z_t]$, while the off-diagonal components associated with other players' idiosyncratic noises disappear. After imposing the equilibrium consistency condition, this produces the quadratic terms of types $\E[Z_t]^\top Z_t$ and $|\E[Z_t]|^2$. In particular, in the limiting mean-field model, since there is no common noise, the representative agent's idiosyncratic risk is hedgeable, but with a consistency condition price to pay.

To solve \eqref{intro-general-bsde} with driver including $\E[Z]^\top Z$ term, one may expect to freeze the measure variables of \eqref{intro-general-bsde} and downgrade the equation to a BSDE with a stochastic-Lipschitz generator. However, in view of \cite{BC08}, the genenral stochastic Lipschitz coefficients would cause a degeneracy in integrability of solutions. More precisely, for any such $L^p$-coefficients, one should expect the solution to be $L^{q}$ with some $q<p.$ Our first key observation is that when the stochastic coefficient is determinisitic, we do not have this degeneracy in integrability as long as coefficients has enough integrability. This special structure allows us to work with
possibly unbounded terminal values under polynomially integrable Malliavin derivative conditions,
without assuming exponential integrability of $\xi$ or an a priori BMO
bound for the local solution. To handle the term $|\E[Z_t]|^2$, we exploits precisely the addibility structure of generators from the mean-field models. Indeed, we decompose
$Y=Y^{(1)}+Y^{(2)}$: the pair $(Y^{(1)},Z)$ solves the kind of BSDEs with stochastic-Lipschitz driver after freezing the measure, while the deterministic
finite-variation process $Y^{(2)}$ carries the law-dependent quadratic
term. The resulting BSDE--ODE system can be shown to be equivalent to the original equation under mild assumptions. Finally, to extend the local solution to a global one, we add an extra exponential integrability on its Malliavin derivatives, and exploits the entropy inequality to obtain a global apriori bound on $Y$ and its Malliavin derivative $DY$.

The main contributions of the paper are three folds. Firstly we introduce a general mean-field portfolio game with price impact, and a finite-contract pricing model where the risks depend on idyosyncratic noise, which we connect to a new kind of mean-field qBSDEs. Secondly, we develop general methods to solve this kind of qBSDEs locally under polynomial integrability conditions on the terminal data and their Malliavin derivatives, and globally under a exponential integrability condition on the Malliavin derivatives, using a BSDE--ODE decomposition and entropy estimates. Thirdly, we show the existence and uniqueness of mean-field equilibria for the two mean-field models by applying the qBSDE theory.

The remainder of the paper is organized as follows.  After the notation and
preliminaries below, Section~\ref{sec:utility-model} presents the mean-field
price-impact and finite-contract pricing models, and moreover, derives the characterization of their
equilibria via mean-field qBSDEs.  Section~\ref{sec:local-solvability} establishes local
well-posedness, first for the $\E[Z]Z$ interaction and then for the additional
$|\E[Z]|^2$ term.  Section~\ref{sec:global} globalizes the local solution
under the exponential integrability condition on the Malliavin derivatives.
Section~\ref{sec:utility-application} applies these results to
establish existence and uniqueness of equilibria in the two mean-field models.
Section~\ref{sec:conclusion} concludes.

\noindent\textbf{Acknowledgement.}
The author gratefully acknowledges financial support from the Fundamental Research Funds for the Central
Universities and the NSF of Shandong (ZR2023MA026); DFG CRC/TRR 388 ``Rough Analysis, Stochastic
Dynamics and Related Fields''. This work is started when HZ works in Humboldt University Berlin. Special thank to Ulrich Horst and Emil Schmidek for helpful discussion on related topics.

\subsection{Setups and notations}

Suppose that $(\Omega, \MF, P)$ is a probability space with $d$-dimensional Brownian motion $B$ and $\{\MF_t\}_{t\ge 0}$ is the augmented Brownian filtration. In this paper we fix time interval as $[0,T]$. Denote by $V$ a Banach space with norm $|\cdot|_V$, and if $V$ is Euclidean, we write $|\cdot|_V$ by $|\cdot|.$ Throughout, vectors in $\R^m$ are regarded as column vectors.  For a vector
or matrix $A$, the notation $A^\top$ denotes its transpose For any $p \ge 1,$ denote by
\begin{itemize}[leftmargin=1.2em, labelsep=0.4em]
\item For an $N$-dimensional Euclidean vector $\boldsymbol\alpha=(\alpha^1,..., \alpha^N)$, we write $\boldsymbol\alpha^{-i}:=(\alpha^1,...\alpha^{i-1}, \alpha^{i+1},..., \alpha^N).$ We also write $(\beta^i; \boldsymbol\alpha^{-i})=(\alpha^1,...\alpha^{i-1}, \beta^i, \alpha^{i+1},..., \alpha^N)$ for any $\beta^i \in \R;$

\item $\MC ( [0,T], V)$ the space of continuous $V$-valued paths $y$ on $[0,T]$ with norm
$\| y \|_{[0,T] }:= \sup_{t\in[0,T]} |y_t|_V,$

\item $L^p([0,T],V)$ the space of Lebesgue measurable function $a$ on $[0,T]$ with bounded 
$\| a \|_{L^p}^p:=\int_0^T |a_r|^p_V dr,$

\item $\MC^k(\R^n,\R^m)$ the space of functions from $\R^n$ to $\R^m$ with continuous derivatives up to $k$-th order, and $\MC^k_b(\R^n,\R^m)$ the subspace of $\MC^k(\R^n,\R^m)$ such that the function $f$ and its all derivatives are bounded, with norm 
$\|f\|_{\MC_b^k}:= \sum_{0\le i \le k} \sup_{x\in \R^n}|\partial_x^i f(x)|$.
For $k=0$, we write $\MC_b:=\MC_b^0$ and
$\|f\|_{\MC_b}:=\sup_{x\in\R^n}|f(x)|$;

\item $\ML^p( \R^m)$ the space of $\mathcal{F}$-measurable $\R^m$-valued random variables $\xi$ with
$\|\xi \|_{\ML^p}^p:= \E|\xi |^p< \infty,$


\item $\MM^p(\R^m) $ the space of $\R^m$-valued predictable processes $Z$ with $ \|Z \|_{\MM^p}^p:=\E [ \int_0^T |Z_r|^p dr] < \infty,$


\item $\MH^p( \R^m)$ the space of $\R^m$-valued predictable processes $Z$ with $ \|Z \|_{\MH^p}^p:=\E [ \int_0^T |Z_r|^2 dr]^{\frac{p}{2}} < \infty.$ Note that, on the finite time interval $[0,T]$, we have $\MM^p \subseteq \MH^p$ for any $p \ge 2,$ and $\MH^2=\MM^2.$

\item $\M^p(\R^m) $ the space of $\R^m$-valued progressive measurable processes $Z$ with\\
 $ \|Z \|_{\M^p}^p:=\E [ \int_0^T |Z_r| dr]^p  < \infty,$

\item $\MS^p(\R^m)$ the space of continuous adapted processes $Y$ with $\| Y \|_{\MS^p}^p:=\E[\sup_{t\in[0,T]}| Y|^p]< \infty.$
\item $\op_1(\R^m)$ the space of probability measures on $\R^m$ with finite first order moment. We equip this space with W-$1$ Wasserstein distance
$$
W_1(\mu,\nu):= \inf_{\Pi(\mu,\nu)} \int_{\R^m \times \R^m} |x-y|\ \Pi(dx,dy),
$$where the infimum is taken over all Borel probability measures on $\R^m \times \R^m$ with marginal measures given by $\mu$ and $\nu.$ For any $p \ge 1,$ we also write the $p$-th moment of $\mu \in \op_1$ as the following whenever it is finite,
$$
\| \mu\|_p:= \int_{\R^m} |x|^p \mu(dx).
$$

%

\end{itemize}

In the following we usually omit the obvious dimension index in the above notations for simplicity when no confusion is raised, i.e. $\MH^p$ short for $\MH^p(\R^d),$ and always omit the notation $\R^m$ when $m=1.$  For any local martingale $M,$ we denote by $\mathcal{E}(M).:= \exp(M_.- \frac12\langle M \rangle_.)$ the stochastic exponential of $M$, and write $\E_t^{Q}[\cdot]$ short for the conditional expectation $\E^Q[\cdot|\MF_t]$ under the probability measure $Q.$ The upper index $Q$ will be omitted if $Q=P.$ We denote by $\MH^p([t,s],\R^m)$ the space of processes $X$ restricted on $[t,s]$ with $\| X\|_{\MH^p,{[t,s]}}^p:=\E [ \int_t^s |X_r|^2 dr]^{\frac{p}{2}}< \infty,$ and notations like $\MS^p{([t,s])},$ $\MH^2_{\text{BMO}}([t,s])$ are defined in the similar way. We write $\lesssim_C$ to imply that an ineuqality holds up to the right hand side multiplied by a constant depending on $C.$

Let us briefly review some basic notations and results about Malliavin calculus on the Wiener space. We refer to \cite{N06} for details on this theory. Denote by $\MS$ the space of random variables in the form
\begin{equation*}
\xi =F(\int_0^T h^1_r   dW_r, \cdots,  \int_0^T h^k_r   dW_r)
\end{equation*}
where $h^1,\cdots, h^k \in L^2([0,T], \R^d),$ $F\in \MC^{\infty}_{b}(\R^k, \R)$. For any $\xi \in \MS,$ its Malliavin derivative is defined as the $d$-dimensional process
\begin{equation*}
D_u \xi = \sum_{j=1}^k \partial_j F(\int_0^T h^1_r  dW_r, \cdots, \int_0^T h^k_r   dW_r) h_u^j, \ \ \ u\in [0,T].
\end{equation*}
For any $\xi \in \MS,$ let 
\begin{equation*}
\| \xi \|_{\D^{1,2}}^2:= \E\left[ |\xi|^2 + \int_0^T |D_r \xi|^2 dr \right].
\end{equation*}
Then the operator $D$ is closable and we denote by $\D$ the completion of $\MS$ under norm $\|\cdot \|_{\D^{1,2}}.$
Note that if $\xi \in \MF_t,$ then $D_u \xi =0 $ for any $u \in (t,T].$ Let $\bL^{1,2}([0,T],\R^m)$ the space of progressively measurable process $A$ such that
\begin{itemize}
	\item[(i)]  $A_t \in (\D^{1,2})^m $ for a.e. $t\in [0,T];$ 
	\item[(ii)]  the process $(t,\ome) \rightarrow D_.A(t,\ome) \in L^2([0,T],\R^{d \times m}) $ admits a progressively measurable version;
	\item[(iii)] $\| A \|_{\bL^{1,2}}^2:= \E\left[ \int_0^T |A_r|^2 dr + \int_0^T \int_0^T |D_u A_r |^2 du dr \right] < \infty.$

\end{itemize}
We shall use the following standard random-field chain rule.  Let
$F:\Omega\times\R^m\to\R^k$ be such that $F(x)\in\D^{1,2}$ for every
$x$, $F(\omega,\cdot)$ is continuously differentiable with uniformly bounded spatial
derivative, and $D_uF(\cdot)$ admits a jointly measurable version such that
$D_uF(\cdot)\in\MC_b(\R^m,\R^{k\times d})$ for $du\otimes dP$-a.e.
$(u,\omega)$ and
\[
 \E\int_0^T
 \|D_uF(\cdot)\|_{\MC_b(\R^m,\R^{k\times d})}^2du<\infty.
\]
If $X\in(\D^{1,2})^m$, then
\be\label{random-field-chain-rule}
 D_u[F(X)]=D_uF(X)+\nabla F(X)D_uX.
\ee
The process-valued version holds under the analogous assumptions on
$[0,T]\times\Omega$ and the condition
\[
 \E\int_0^T\int_0^T
 \|D_uF_r(\cdot)\|_{\MC_b(\R^m,\R^{k\times d})}^2du\,dr<\infty.
\]

\section{Mean-field exponential utility equilibrium models}\label{sec:utility-model}

In this section, we present the equilibrium models that motivate the class of
quadratic BSDEs studied below.  We begin with a benchmark price impact
portfolio game.  

%

\subsection{A benchmark price impact portfolio game}
\label{sec:price-impact-model}

\subsubsection{Multiple-player motivation and mean-field limit}

Consider $N$ agents with common preferences on a probability space supporting $N$ 
independent $1$-dimensional Brownian motions $B^1,\ldots,B^N$ with the augmented filtration generated by $B^i$ denoted by $( \MF^{B^i}_t)_t$.  Agent $i$ has initial wealth
$x^i$, a terminal liability $\xi^i$, and trades an agent-specific risky
asset with return
\begin{equation*}
 \frac{dS_t^{i,N}}{S_t^{i,N}}
 =(b_t^i+ \lambda^i m_t^N)dt+\sigma_t^i dB_t^i, \qquad i=1,...,N,
\end{equation*}
where $(b^i,\sigma^i)$ are idiosyncratic market coefficients, $\lambda^i$ is a deterministic parameter describing price impact sensitivity, and $m^N$ is the empirical price impact. If $\alpha_t^i$ is the dollar of agent $i$ invested in the $i$'s risky
asset, and takes values in some admissible control set $\MA$. Then the corresponding wealth of agent $i$ is
\begin{equation*}
 X_t^{i,N;\boldsymbol\alpha}
 =x^i+\int_0^t\alpha_r^i(b_r^i+ \lambda^i m_r^N)dr
      +\int_0^t\alpha_r^i\sigma_r^i dB_r^i.
\end{equation*}
With a risk aversion $\eta^i >0$, agent $i$ evaluates an investment
$\alpha^i$, given others' investment $\boldsymbol\alpha^{-i}$, through
\begin{equation*}
 J_i^N(\alpha^i;\boldsymbol\alpha^{-i})
 :=\E\left[-\exp\left(-\eta^i
   (X_T^{i,N;(\alpha^i,\boldsymbol\alpha^{-i})}-\xi^i)\right)\right].
\end{equation*}
A Nash equilibrium is a profile
$\widehat{\boldsymbol\alpha}^{N}$ satisfying
\begin{equation*}
 J_i^N(\widehat\alpha^{i,N};
       \widehat{\boldsymbol\alpha}^{-i,N})
 =\sup_{\alpha^i \in \MA}
   J_i^N(\alpha^i;\widehat{\boldsymbol\alpha}^{-i,N}),
 \qquad i=1,\ldots,N.
\end{equation*}

Let $(B^i,\xi^i,b^i,\sigma^i)_{1\le i\le N}$ are independent copies: more
precisely, $\xi^i \in \MF^{B^i}_T$, and $b^i$ and $\sigma^i$ are progressively measurable with respect to $(\MF^{B^i}_t)_t$. Suppose all agents are all similar and share the same price impact sensitivity $\lambda^i  \equiv 1$ and risk aversion $\eta>0$. For a strategy 
$\boldsymbol\alpha=(\alpha^1,\ldots,\alpha^N)$, let the empirical price
impact given by the averaged strategies of all agents
\be\label{empirical-impact}
m_t^N:=\frac{1}{N}\sum_{j=1}^N\alpha_t^j.
\ee

In this case, the propagation of chaos implies that, in an appropriate topology and under suitable condittions,
\[
 m_t^N=\frac1N\sum_{j=1}^N\widehat\alpha_t^{j,N}
 \longrightarrow m_t:=\E[\widehat\alpha_t].
\]
For more details of the multiple-player model (even with a common noise) and its mean-field limit, see \cite{HorstSchmidekZhang26}.

We now formulate the corresponding mean-field problem.  Let $(B,\xi,b,\sigma)$ be a generic
copy of the above idiosyncratic data, and $b,\sigma$ are predictable in the
Brownian filtration.  We assume that $b$, $\sigma$, and $\sigma^{-1}$ are uniformly 
bounded.  Given a deterministic influence flow
$m\in L^2([0,T])$, the representative risky
asset has dynamics
\begin{equation*}
\frac{dS_t^m}{S_t^m}=(b_t+m_t)dt+\sigma_t dB_t.
\end{equation*}
Let $\alpha_t$ denotes the dollar amount invested in the representative risky
asset, the wealth process starting from $x\in\R$ is
\begin{equation*}
X_t^{\alpha,m}
=x+\int_0^t\alpha_r(b_r+m_r)dr
  +\int_0^t\alpha_r\sigma_r dB_r.
\end{equation*}
The representative agent has constant risk aversion parameter $\eta>0$ and
a terminal liability $\xi\in\MF_T$.  The agent tries to maximise its utility 
\begin{equation*}
\sup_{\alpha\in\MA }J(\alpha;m),\qquad
J(\alpha;m):=\E\left[-\exp\left(-\eta
  (X_T^{\alpha,m}-\xi)\right)\right].
\end{equation*}
A mean-field equilibrium is a pair $(\widehat\alpha,m)$ such that
\be\label{mf-equilibrium-condition}
J(\widehat\alpha;m)=\sup_{\alpha\in\MA }J(\alpha;m),
\qquad m_t=\E[\widehat\alpha_t]\quad\text{for a.e. }t\in[0,T].
\ee

\subsubsection{The mean-field equilibrium via the martingale optimality principle}\label{sec:mf-martingale-rep}

Fix any fixed deterministic impact flow $m$, let $(Y,Z)$ solve
the BSDE
\begin{equation*}
Y_t=\xi+\int_t^T f_r^m(Z_r)dr-\int_t^T Z_r dB_r,
\end{equation*}
where the driver is to be determined.  For every admissible control $\alpha$,
consider the candidate value process
\begin{equation*}
R_t^{\alpha,m}:=-\exp\left(-\eta
  (X_t^{\alpha,m}-Y_t)\right).
\end{equation*}
An application of It\^o's formula shows that the drift part of
$dR^{\alpha,m}$ is
\be\label{candidate-value-drift}
R_t^{\alpha,m}\left\{-\eta\big(\alpha_t(b_t+m_t)
  +f_t^m(Z_t)\big)
  +\frac{\eta^2}{2}|\alpha_t\sigma_t-Z_t|^2\right\}dt.
\ee
Since $R^{\alpha,m}<0$, the martingale optimality principle is satisfied
if the expression in the above bracket is nonnegative for every $\alpha$ and vanishes
at the optimizer.  Accordingly, we set
\begin{align}
f_t^m(z)
:=\inf_{a\in\R}\left\{\frac{\eta}{2}|a\sigma_t-z|^2
  -a(b_t+m_t)\right\} =-\frac{b_t+m_t}{\sigma_t}z
  -\frac{|b_t+m_t|^2}{2\eta\sigma_t^2},
\label{best-response-driver}
\end{align}
and the candidate optimal control is
\be\label{best-response-control}
\widehat\alpha_t
=\frac{Z_t}{\sigma_t}
 +\frac{b_t+m_t}{\eta\sigma_t^2}.
\ee
In this case, the standard martingale optimality argument (see e.g. \cite{Pham09}) verifies that it is optimal whenever the candidate control is admissible, and in particular,
\be\label{utility-value}
\sup_{\alpha\in\MA }J(\alpha;m)
=-\exp\big(-\eta(x-Y_0)\big).
\ee

It remains to impose the consistency condition in
\eqref{mf-equilibrium-condition}.  Taking expectations in
\eqref{best-response-control} we have
\be\label{impact-fixed-point}
m_t=\E\left[\frac{Z_t}{\sigma_t}\right]
  +\frac{1}{\eta}\E\left[\frac{b_t}{\sigma_t^2}\right]
  +\frac{m_t}{\eta}\E\left[\frac{1}{\sigma_t^2}\right].
\ee
Let
\be\label{no:beta-q}
 a_t:=\E[\sigma_t^{-2}],\qquad
 \beta_t:=\E[b_t\sigma_t^{-2}],\qquad
 q_t(Z):=\E[Z_t/\sigma_t].
\ee
Assume that there is a constant $\delta>0$ such that
\be\label{impact-nondegeneracy}
1-\frac{a_t}{\eta}\ge\delta,
\qquad\text{for a.e. }t\in[0,T],
\ee
and define
\begin{equation}\label{no:c}
c_t:=\left(1-\frac{a_t}{\eta}\right)^{-1}.
\end{equation}
Then by \eqref{impact-fixed-point}, we obtain
\be\label{equilibrium-impact}
m_t=c_t\left(q_t(Z)+\frac{\beta_t}{\eta}\right).
\ee
Substituting this identity into \eqref{best-response-driver} we obtain the
following mean-field quadratic BSDE
\begin{align}
Y_t={}&\xi+\int_t^T\left[
 -\frac{b_r+c_r(q_r(Z)+\beta_r/\eta)}{\sigma_r}Z_r
 -\frac{\bigl[b_r+c_r(q_r(Z)+\beta_r/\eta)\bigr]^2}
        {2\eta\sigma_r^2}
 \right]dr
 -\int_t^T Z_r dB_r.
\label{stochastic-utility-mf-qbsde}
\end{align}
%

An important case is that  $\sigma$ is deterministic. Note that in this case
$
 m_t=c_t\left(\frac{\E[Z_t]}{\sigma_t}
  +\frac{\E[b_t]}{\eta\sigma_t^2}\right).
$
Consequently, \eqref{stochastic-utility-mf-qbsde} becomes
\be
\begin{split}
Y_t=&\ \xi+\int_t^T \Big[ -\frac{c_r}{\sigma_r^2}\E[Z_r]Z_r - (\frac{b_r}{\sigma_r}
  +\frac{c_r\E[b_r]}{\eta\sigma_r^3} )Z_r\\
 &-\frac{1}{2\eta\sigma_r^2}
   \Big| b_r+\frac{c_r\E[b_r]}{\eta\sigma_r^2}
  +\frac{c_r\E[Z_r]}{\sigma_r} \Big|^2
  \Big]dr-\int_t^T Z_r dB_r.
\label{utility-mf-qbsde}
\end{split}
\ee

\begin{rem}
  Note that in the above model, we do not assume a common noise, and thus the corresponding multiple-player model is driven by indiosyncratic noises. With a common noise, both the multiple-player model and the mean-field limit can be formulated in a similar way, but a pathwise quadratic term $Z^2$ will appear. 
  The global result of Section~\ref{sec:global} does not cover this additional pathwise quadratic term  (see \cite{FreiDosReis11} for a counterexample of global well-posedness). We refer to \cite{HorstSchmidekZhang26} for a detailed discussion on this topic.

\end{rem}

We finally make explicit the connection between
\eqref{utility-mf-qbsde} and the equations analyzed in the remainder of the
paper.  For $\rho\in\op_1(\R)$, write
$G_t(\rho):=  \bar z(\rho):=\int_{\R}u\rho(du)$ and set
\begin{align*}
&F_t(y,z,\nu,\rho,\MZ):=-\frac{c_t}{\sigma_t^2}\MZ
  -\left(\frac{b_t}{\sigma_t}
    +\frac{c_t\E[b_t]}{\eta\sigma_t^3}\right)z
  -\frac{c_t}{\eta\sigma_t^3}
    \left(b_t+\frac{c_t\E[b_t]}{\eta\sigma_t^2}\right)\bar z(\rho)
  -\frac{1}{2\eta\sigma_t^2}
    \left|b_t+\frac{c_t\E[b_t]}{\eta\sigma_t^2}\right|^2\\
    &  h_t(\nu,\rho):=-\frac{c_t^2 \bar{z}(\rho)^2 }{2\eta\sigma_t^4}, \ \ \ H_t:= F_t+h_t.
\end{align*}
Then \eqref{utility-mf-qbsde} is precisely
\be\label{utility-mf-law-form}
Y_t=\xi+\int_t^T H_r\big(Y_r,Z_r,\ML(Y_r),\ML(Z_r),
  G_r(\ML(Z_r))Z_r\big)dr-\int_t^T Z_r dB_r.
\ee

\subsection{A mean-field finite-contract pricing model with idiosyncratic risks}
\label{sec:pricing-model}

A default-free intermediary offers $d$ types of client-specific risk-transfer
contracts. Within each type, contracts have a common notional unit and premium
rate, while payments depend on each client's observable idiosyncratic risk
factors. The intermediary takes the opposite side of every contract.

Let $(B^i,\xi^i)_{i=1}^N$ be independent copies of a $d$-dimensional Brownian
motion and a terminal liability measurable with respect to its filtration,
and let $\mathbb F^N$ be their augmented joint Brownian filtration.
In discounted units, the cumulative gains per unit notional satisfy
\be\label{finite-contract-prices}
 dG_t^{i,\nu^N}=\nu_t^Ndt+\Sigma_t dB_t^i,
 \qquad G_0^{i,\nu^N}=0,\qquad i=1,\ldots,N,
\ee
where $\nu^N$ is $\mathbb F^N$-predictable with values in $\R^d$ and $\Sigma$ is deterministic and
invertible, with $\Sigma$ and $\Sigma^{-1}$ bounded. The quoted premium rate is
$p^N:=-\nu^N$: a positive position pays the premium and receives the indexed
payment $\Sigma_t dB_t^i$.

For an $\mathbb F^N$-predictable vector of signed notionals $\alpha^i$, client
$i$'s wealth and exponential utility, with risk aversion $\eta>0$, are
\begin{equation*}
 X_t^{i,\alpha^i,\nu^N}
 =x^i+\int_0^t(\alpha_r^i)^\top\nu_r^Ndr
      +\int_0^t(\alpha_r^i)^\top\Sigma_r dB_r^i,
 \qquad
 J_i^N(\alpha^i;\nu^N)
 :=\E\left[-e^{-\eta(X_T^{i,\alpha^i,\nu^N}-\xi^i)}\right].
\end{equation*}
The intermediary's supply fixes aggregate signed positions $Ns_t$, for
a given bounded measurable $s:[0,T]\to\R^d$. Define per-client demand by
\be\label{finite-contract-demand}
 A_t^N:=\frac1N\sum_{i=1}^N\alpha_t^i,
\ee
so the supply condition is $A^N=s$. The intermediary's wealth satisfies
\[
 dW_t^{D,N}=-\sum_{i=1}^N(\alpha_t^i)^\top dG_t^{i,\nu^N},
 \qquad dW_t^{D,N}+\sum_{i=1}^N dX_t^{i,\alpha^i,\nu^N}=0.
\]
Then clients take the common quote as given, and the quote
adjusts to meet the supply mandate. Write $\MA_i^N(\nu^N)$ for client $i$'s
admissible portfolios.

\begin{defi}[Multiple-player finite-contract pricing equilibrium]
\label{def:finite-client-pricing-equilibrium}
For fixed $N$, a pair $(\nu^N,\widehat{\boldsymbol\alpha}^{N})$ is called an
$N$-player pricing equilibrium if $\nu^N$ is an admissible
$\mathbb F^N$-predictable process and
$\widehat{\boldsymbol\alpha}^{N}
=(\widehat\alpha^{1,N},\ldots,\widehat\alpha^{N,N})$ with
$\widehat\alpha^{i,N}\in\MA_i^N(\nu^N)$. Moreover,
\[
 J_i^N(\widehat\alpha^{i,N};\nu^N)
 =\sup_{\alpha^i\in\MA_i^N(\nu^N)}J_i^N(\alpha^i;\nu^N),
 \qquad i=1,\ldots,N,
\]
and aggregate demand meets the prescribed supply:
\be\label{eq:exact-clearing}
 \frac1N\sum_{i=1}^N\widehat\alpha_t^{i,N}=s_t
 \quad dt\otimes d\PP \text{-a.e.}
\ee
\end{defi}

The common quote couples clients through \eqref{eq:exact-clearing}.
For fixed $\nu^N$, set $\theta^N:=\Sigma^{-1}\nu^N$.
Applying It\^o's formula to
$-\exp[-\eta(X_t^{i,\alpha^i,\nu^N}-Y_t^{i,N})]$ gives the candidate BSDE driver,
for $\boldsymbol z=(z^1,\ldots,z^N)$,
\begin{align}
 f_t^{i,N,\nu^N}(\boldsymbol z)
 &:=\inf_{a\in\R^d}\left\{
   \frac{\eta}{2}\left(
      |\Sigma_t^\top a-z^i|^2+\sum_{j\ne i}|z^j|^2
   \right)-a^\top\nu_t^N\right\} \notag\\
 &=-\theta_t^N\cdot z^i-
   \frac{|\theta_t^N|^2}{2\eta}
   +\frac{\eta}{2}\sum_{j\ne i}|z^j|^2,
 \label{finite-client-fixed-driver}
\end{align}
and the minimizing portfolio is
\be\label{finite-client-fixed-control}
 \widehat\alpha_t^{i,N}
 =(\Sigma_t^\top)^{-1}\left(
 Z_t^{i,i,N}+\frac{\theta_t^N}{\eta}\right).
\ee
This identifies an optimal portfolio provided that the BSDE
\be\label{finite-client-fixed-bsde}
 Y_t^{i,N}=\xi^i+\int_t^T
 f_r^{i,N,\nu^N}\bigl((Z_r^{i,j,N})_{j=1}^N\bigr)dr
 -\sum_{j=1}^N\int_t^T Z_r^{i,j,N}dB_r^j,
\ee
admits a solution satisfying the martingale verification conditions.
Denote by $\overline Z_t^N:=N^{-1}\sum_{i=1}^NZ_t^{i,i,N}$, and we have by the supply condition
and \eqref{finite-client-fixed-control} that
\[
 \theta_t^N=\eta(\Sigma_t^\top s_t-\overline Z_t^N),
 \qquad
 \nu_t^N=\eta\Sigma_t(\Sigma_t^\top s_t-\overline Z_t^N).
\]
Substituting the above into \eqref{finite-client-fixed-bsde}, we obtain the candidate equilibrium system
\be \label{finite-client-equilibrium-bsde}
\begin{split}
 Y_t^{i,N}={}&\xi^i+\int_t^T\left[
   \eta(\overline Z_r^N-\Sigma_r^\top s_r)\cdot Z_r^{i,i,N}
   -\frac{\eta}{2}|\overline Z_r^N-\Sigma_r^\top s_r|^2
   +\frac{\eta}{2}\sum_{j\ne i}|Z_r^{i,j,N}|^2
 \right]dr \\
 &-\sum_{j=1}^N\int_t^T Z_r^{i,j,N}dB_r^j,
 \qquad i=1,\ldots,N.
\end{split}
\ee
The off-diagonal term represents other clients' risks transmitted through the
random quote, which client $i$ cannot hedge using contracts driven by $B^i$.
It disappears in the representative-client problem with a deterministic quote.
Formally, the mean-field limit is
\be\label{finite-contract-lln}
 \nu^N\longrightarrow\nu^*,
 \qquad
 \frac1N\sum_{i=1}^N\widehat\alpha^{i,N}
 =s=\E[\widehat\alpha].
\ee
Here average demand already equals $s$ for every $N$; convergence of the quote
requires additional estimates, discussed in
Remark~\ref{rem:pricing-finite-convergence}.

Now we formulate the corresponding mean-field problem.  Let $(B,\xi)$ be a generic copy of the above idiosyncratic data, and $B$ is a $d$-dimensional Brownian motion.  We assume that $\Sigma$ and $\Sigma^{-1}$ are bounded. For a representative client with data $(B,\xi)$ and a deterministic quote
$\nu\in L^q([0,T],\R^d)$, the wealth process is
\begin{equation*}
 X_t^{\alpha,\nu}
 =x+\int_0^t\alpha_r^\top\nu_rdr
      +\int_0^t\alpha_r^\top\Sigma_r dB_r.
\end{equation*}
Given $\nu$, the representative client maximizes
\begin{equation*}
 J(\alpha;\nu):=
 \E\left[-\exp\left(-\eta(X_T^{\alpha,\nu}-\xi)\right)\right]
\end{equation*}
over the admissible portfolios $\MA(\nu)$ defined in
Section~\ref{sec:pricing-application}.

A mean-field pricing equilibrium is a pair $(\widehat\alpha,\nu^*)$ with
deterministic $\nu^* \in L^q([0,T],\R^d)$ and
$\widehat\alpha\in\MA(\nu^*)$ such that
\be\label{eq:mean-field-pricing-equilibrium}
 J(\widehat\alpha;\nu^*)
 =\sup_{\alpha\in\MA(\nu^*)}J(\alpha;\nu^*),
 \qquad
 \E[\widehat\alpha_t]=s_t
 \quad\text{for a.e. }t\in[0,T].
\ee


For fixed $\nu$, set $\theta_t^\nu:=\Sigma_t^{-1}\nu_t$.
By the martingale optimality principle, we see that
\begin{align}
 f_t^\nu(z)
 &=\inf_{a\in\R^d}\left\{
    \frac{\eta}{2}|\Sigma_t^\top a-z|^2-a^\top\nu_t\right\}
 =-\theta_t^\nu z-\frac{|\theta_t^\nu|^2}{2\eta},
 \label{pricing-fixed-driver}\\
 \widehat\alpha_t^\nu
 &=(\Sigma_t^\top)^{-1}\left(Z_t+\frac{\theta_t^\nu}{\eta}\right).
 \label{pricing-fixed-control}
\end{align}
By the supply condition and
\eqref{pricing-fixed-control} we have
\be\label{pricing-return}
 \theta_t^*=\eta(\Sigma_t^\top s_t-\E[Z_t]),
 \qquad
 \nu_t^*=\eta\Sigma_t(\Sigma_t^\top s_t-\E[Z_t]).
\ee
Substituting the above into \eqref{pricing-fixed-driver}, we obtain the mean-field qBSDE
\be\label{pricing-mf-qbsde}
 Y_t=\xi+\int_t^T\left[
   \eta(\E[Z_r]-\Sigma_r^\top s_r)Z_r
   -\frac{\eta}{2}|\E[Z_r]-\Sigma_r^\top s_r|^2
 \right]dr-\int_t^T Z_r dB_r.
\ee
Once \eqref{pricing-mf-qbsde} is solved, the equilibrium candidates are
recovered by
\begin{equation*}
 \nu_t^*=\eta\Sigma_t(\Sigma_t^\top s_t-\E[Z_t]),
 \qquad
 \widehat\alpha_t
 =s_t+(\Sigma_t^\top)^{-1}(Z_t-\E[Z_t]).
\end{equation*}
Thus the premium $p_t^*=-\nu_t^*=\eta\Sigma_t(\E[Z_t]-\Sigma_t^\top s_t)$
reflects aggregate hedging demand relative to supply, while individual positions adjust according to the deviation of $Z_t$ from its expectation.
Equation~\eqref{pricing-mf-qbsde} has precisely the law form
\eqref{utility-mf-law-form} with
\[
 G_t(\rho)=\bar z(\rho)-\Sigma_t^\top s_t,\qquad
 F_t(y,z,\nu,\rho,\MZ)=\eta \MZ,
 \qquad h_t(\nu,\rho)=-\frac{\eta}{2}|\bar z(\rho)-\Sigma_t^\top s_t|^2.
\]

For example, take $d=1$, $\Sigma_t=\sigma>0$, constant $s$, and liability $\xi=\kappa B_T+\frac{\gamma}{2}B_T^2$, with $\gamma>0$.
By the ansatz \eqref{pricing-mf-qbsde}, we have
\[
 p_t^*=\eta\sigma(\kappa-\sigma s)e^{\eta\gamma(T-t)},
 \qquad \widehat\alpha_t=s+\frac{\gamma}{\sigma}B_t.
\]
Then the premium increases with the linear liability coefficient $\kappa$ and decreases with
supply $s$; when $\kappa>\sigma s$, greater liability convexity (larger $\gamma$) increases the equilibrium premium at each $t<T$.
Later we see that this example satisfies Assumption~\ref{assu:pricing-equilibrium} for our main theorem.

\section{Local solvability of mean-field quadratic BSDEs}\label{sec:local-solvability}

Throughout Sections~\ref{sec:local-solvability} and~\ref{sec:global}, the
Brownian motion have any fixed finite dimension $d\ge1$.  Accordingly,
$Z_t$, $a_t$, and $G_t(\ML(Z_t))$ take values in $\R^d$, and products of two
such vectors, for example $a_tZ_t$ and
$G_t(\ML(Z_t))Z_t$, denote Euclidean inner products.  Malliavin derivatives
are understood componentwise in the Brownian direction.

Motivated by the cross-quadratic structure of the equilibrium equation
\eqref{utility-mf-qbsde}, we first
consider the following general mean-field BSDE.  Given
$\xi \in L^q(\MF_T,\R)$ and
$f:[0,T] \times \Omega_T \times \R^{d+1} \times \R^{d+1} \times \R
\longrightarrow \R$, let
\be\label{BSDE1}
Y_t=\xi+ \int_t^T f_r(Y_r,Z_r, \E[Y_r], \E[Z_r],
 \E[Z_r] Z_r) dr - \int_t^T Z_r dB_r.
\ee
where the dependence of functions on $\ome$ is omitted as usual.  For the
local well-posedness analysis, we first consider the following frozen
equation, which will be used in the fixed-point arguments below:
\be\label{BSDE2}
Y_t= \xi + \int_t^T g_r( Y_r, Z_r, a_r  Z_r) dr- \int_t^T Z_r dB_r.
\ee
For the well-posedness of \eqref{BSDE2} and a priori estimate, we assume that 
\begin{assumption}\label{assu:H0}
\textbf{(i)} $\xi \in \ML^q$ with some $q \ge 2$  and $g:[0,T] \times \Omega \times \R \times \R^d \times \R \rightarrow \R$ is progressively measurable with 
$$
\|g(0)\|_{\M^q}^q:=\E  [ \int_0^T |g_r(0,0,0) | dr]^q  < \infty. 
$$
\textbf{(ii)} For any $t \in [0,T] , $ $g_t(\cdot, \cdot, \cdot)$ is uniformly Lipschitz in the sense that for any $(y,z,\MZ),(y',z',\MZ')\in  \R^{d+2} $, for some constant $K>0,$
\begin{equation*}
|g_t(y,z,  \MZ)-g_t(y', z', \MZ')| \le K  ( |y-y'|+ |z-z'|+|\MZ-\MZ'|   ).
\end{equation*}
\end{assumption}

\begin{lem}\label{a-bsde}
Suppose that Assumption~\ref{assu:H0} holds for some $q\ge2$ and that
$a\in L^2([0,T],\R^d)$ is deterministic. Then there exists a unique
solution $(Y,Z)\in\MS^q([0,T])\times\MH^q([0,T],\R^d)$ to
\eqref{BSDE2}. Moreover,
\be\label{a-bsde-est}
\|Y\|_{\MS^q,[0,T] }^q + \| Z \|_{\MH^q,[0,T]  }^q \le C (\| \xi \|_{\ML^q}^q + \|g(0) \|_{\M^q,[0,T]}^q),
\ee
where $C$ depends only on $K,q,T$ and $\|a\|_{L^2([0,T])}$.
	
\end{lem}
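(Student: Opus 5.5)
The approach is to reduce \eqref{BSDE2} to a standard Lipschitz BSDE whose Lipschitz constant in $z$ is replaced by the deterministic, square-integrable weight $K(1+|a_r|)$. The map $(y,z)\mapsto g_r(y,z,a_rz)$ is Lipschitz in $y$ with constant $K$ and in $z$ with constant $K(1+|a_r|)$. Since $a$ is deterministic, there is no integrability loss of the type in \cite{BC08}; we only need that $\int_0^T|a_r|^2dr<\infty$, so that $\int_0^T K^2(1+|a_r|)^2dr$ is finite.

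\textbf{Step 1: a priori estimate \eqref{a-bsde-est} on short intervals.} Subdivide $[0,T]$ into finitely many subintervals $[t_{i},t_{i+1}]$ on which $\int_{t_i}^{t_{i+1}}K^2(1+|a_r|)^2dr\le\varepsilon$ for a small $\varepsilon$ depending only on $q$. The number of pieces depends only on $K$, $\|a\|_{L^2}$, $T$ and $q$. On one such interval, with terminal value $Y_{t_{i+1}}$, write
\[
Y_t=Y_{t_{i+1}}+\int_t^{t_{i+1}}g_r(Y_r,Z_r,a_rZ_r)dr-\int_t^{t_{i+1}}Z_rdB_r .
\]
By the Burkholder--Davis--Gundy inequality and the standard $L^q$ estimate for BSDEs (e.g. via $Y_t=\E_t[\cdots]$ together with Doob's inequality, and the martingale representation bound $\|Z\|_{\MH^q}\lesssim_q\|\xi+\int g\|_{\ML^q}$), we get
\[
\|Y\|_{\MS^q}+\|Z\|_{\MH^q}\lesssim_q \|Y_{t_{i+1}}\|_{\ML^q}+\Big\|\int_{t_i}^{t_{i+1}}|g_r(0,0,0)|dr\Big\|_{\ML^q}+\Big\|\int_{t_i}^{t_{i+1}}K(|Y_r|+(1+|a_r|)|Z_r|)dr\Big\|_{\ML^q}.
\]
By Cauchy--Schwarz in time, $\int(1+|a_r|)|Z_r|dr\le(\int(1+|a_r|)^2dr)^{1/2}(\int|Z_r|^2dr)^{1/2}$, so the last term is at most $K(t_{i+1}-t_i)\|Y\|_{\MS^q}+\sqrt{\varepsilon}\|Z\|_{\MH^q}$. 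Choosing $\varepsilon$ (and if needed the mesh) small absorbs it into the left side. Iterating backward over the finitely many intervals gives \eqref{a-bsde-est} with $C=C(K,q,T,\|a\|_{L^2})$.

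\textbf{Step 2: existence and uniqueness.} On each subinterval, define the Picard map $\Phi(U,V)=(Y,Z)$ on $\MS^q\times\MH^q$ by $Y_t=\E_t[Y_{t_{i+1}}+\int_t^{t_{i+1}}g_r(U_r,V_r,a_rV_r)dr]$, with $Z$ given by martingale representation. The same estimate applied to differences gives
\[
\|\Phi(U,V)-\Phi(U',V')\|\lesssim_q K(t_{i+1}-t_i)\|U-U'\|_{\MS^q}+\sqrt{\varepsilon}\|V-V'\|_{\MH^q},
\]
so $\Phi$ is a contraction for small $\varepsilon$ and mesh. Well-definedness uses Assumption~\ref{assu:H0}(i) and the same Cauchy--Schwarz bound. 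Pasting the local solutions backward from $T$ yields a global solution, and uniqueness follows from the contraction on each piece.

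\textbf{Main obstacle.} The delicate point is the $Z$-term: $|a_r|$ is only in $L^2$, not bounded, so a Gronwall-type argument in $L^q$ is unavailable. The time-localized Cauchy--Schwarz step, which uses that $a$ is deterministic so $\int(1+|a_r|)^2dr$ is a constant, must be done carefully to keep the $\MH^q$ norm, rather than $\MM^q$, on the right. If that estimate proves awkward, I would instead try a measure change: since $a$ is deterministic, the term $a_r\cdot Z_r$ can be removed by a deterministic Girsanov drift, and $\mathcal{E}(\int a\,dB)$ has moments of all orders.
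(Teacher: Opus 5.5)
Your proposal is correct and follows essentially the same route as the paper. You partition $[0,T]$ so that $|I_i|+\|a\|_{L^2(I_i)}^2$ is small and run a Picard contraction in $\MS^q\times\MH^q$ on each piece, using Doob/BDG together with the Cauchy--Schwarz bound $\bigl\|\int_{I_i}|a_r||Z_r|\,dr\bigr\|_{\ML^q}\le\|a\|_{L^2(I_i)}\|Z\|_{\MH^q,I_i}$, which is valid because $a$ is deterministic. You then paste the local solutions backward and obtain \eqref{a-bsde-est} by iterating the local estimate over the finitely many pieces. The only cosmetic difference is that you derive the a priori estimate before existence. Your fallback Girsanov idea is not needed, and the linearized drift there would be $\gamma_r a_r$ with $\gamma$ random rather than deterministic; it would still have all moments, since $|\gamma_r a_r|\le K|a_r|$.
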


\begin{proof}
Firstly, we solve the equation in small time interval. For any $(u,v) \in \MS^q([T_0,T]) \times \MH^q([T_0,T], \R^d) $ with $T_0<T$ to be determined, let $(y,z) \in \MS^2([T_0,T]) \times \MH^2([T_0,T], \R^d)  $ satisfies BSDE with fixed driver
\be\label{BSDE3}
y_t= \xi + \int_t^T g_r( u_r, v_r, a_r  v_r) dr- \int_t^T z_r dB_r,\ \ \  t \in [T_0,T].
\ee
Indeed, note that by Assumption~\ref{assu:H0}(ii) and H\"older's inequality,
\be\label{ineq-1}
\begin{split}
\E[\int_t^T|g_r(u_r,v_r, a_r v_r)|dr]^2 \lesssim_{K,T} & \|g(0)\|_{\M^q}^2 +\|u\|_{\MS^2}^2 + \|v\|_{\MH^2}^2   + \E[\int_t^T |a|^2_r dr \int_t^T |v|^2_r dr  ] 	\\
 \lesssim_{K,T} & \|g(0)\|_{\M^q}^2+ \|u\|_{\MS^2}^2 + \|v\|_{\MH^2}^2   + \|a\|_{L^2}^2 \|v\|_{\MH^2}^2,
\end{split}
\ee
where we apply the fact that $a$ is deterministic in the second inequality. Then according to classical BSDE theory (see e.g. \cite{PP90}), there exists a unique solution $(y,z) \in \MS^2([T_0,T]) \times \MH^2([T_0,T], \R^d)  $ to \eqref{BSDE3}. Now we show that indeed we have $(y,z) \in \MS^q([T_0,T]) \times \MH^q([T_0,T], \R^d).$ Note that $y_t= \E_t[\xi + \int_t^Tg_r(u_r, v_r, a_r v_r)dr],$ and then 
$$
|y_t| \le \E_t[|\xi|+ \int_{T_0}^T |g_r(u_r, v_r, a_r v_r)| dr]=:M_t.
$$
It follows by Doob's maximal inequality that 
\be\label{a-bd-y1}
\|y \|_{\MS^q}^q \le \|M\|_{\MS^q}^q \le C_q \left[\E|\xi|^q + \E[\int_{T_0}^T |g_r(u_r,v_r,a_r v_r)| dr]^q \right].
\ee
Via a similar argument as \eqref{ineq-1}, we have
\be\label{a-bd-g}
\begin{split}
&\E[\int_{T_0}^T |g_r(u_r,v_r,a_r v_r)| dr]^q \\
&\ \ \ \lesssim_{K,q} \|g(0)\|_{\M^q}^q + (T-T_0)^q\|u\|_{\MS^q}^q + (T-T_0)^{\frac{q}{2}} \|v\|_{\MH^q}^q + [\int_{T_0}^T |a_r|^2 dr]^{\frac{q}{2}} \E[ \int_{T_0}^T |v_r|^2 dr]^{\frac{q}{2}}\\
&\ \ \ \lesssim_{K,q} \|g(0)\|_{\M^q}^q + (T-T_0)^q\|u\|_{\MS^q}^q + (T-T_0)^{\frac{q}{2}} \|v\|_{\MH^q}^q + \| a \|_{L^2([T_0,T])}^q  \|v\|_{\MH^q}^q .
\end{split}
\ee
Then in view of \eqref{a-bd-y1} and \eqref{a-bd-g}, we have
\be\label{a-bd-y}
\| y \|_{\MS^q} \lesssim_{K,q} \| \xi \|_{\ML^q} + \|g(0)\|_{\M^q} + (T-T_0) \|u\|_{\MS^q} + \big((T-T_0)^{\frac{1}{2}}+\| a \|_{L^2([T_0,T])}\big) \|v\|_{\MH^q}.
\ee
Next we show $z \in \MH^q.$ According to the Burkholder-Davis-Gundy inequality and BSDE \eqref{BSDE3}, 
\be\label{a-bd-z}
\begin{split}
\E[\int_{T_0}^T |z_r|^2 dr]^{\frac{q}{2}} \lesssim_q & \E|\int_{T_0}^T z_r dB_r|^q \\
\lesssim_q & \| \xi\|^q_{\ML^q} + \E[\int_{T_0}^T |g_r(u_r, v_r, a_r v_r) | dr ]^q + \|y_{T_0}\|_{\ML^q}^q.
\end{split}
\ee
Then in view of \eqref{a-bd-g}, \eqref{a-bd-y} and \eqref{a-bd-z}, $z \in \MH^q.$
Consider the mapping 
\begin{equation*}
\begin{array}{lll}
	\Psi: & \MS^q([T_0,T]) \times \MH^q([T_0,T], \R^d) \longrightarrow & \MS^q([T_0,T]) \times \MH^q([T_0,T],R^d)	\\
	& (u,v) & (y,z).
\end{array}
\end{equation*}
We claim that $\Psi$ is a contraction on every interval on which both its length and the $L^2$ moment of $a$ are sufficiently small. Indeed, for any $(u,v),(\bu,\bv) \in \MS^q \times \MH^q$
 let $(y,z)$ and $(\by,\bz)$ be the corresponding solutions to \eqref{BSDE3}. Denote by $\delta (u,v,y,z):=(u-\bu,v-\bv,y-\by,z-\bz).$ Then $(\delta y, \delta z)$ is a solution to 
\begin{equation*}
\delta y_t= \int_t^T [g_r(u_r, v_r, a_r v_r)- g_r(\bu_r, \bv_r, a_r \bv_r)] dr - \int_t^T \delta z_r dB_r.
\end{equation*}
Then according to \eqref{a-bd-y1} and \eqref{a-bd-z} with $\xi=0$ and $g(0)=0$, we have 
\begin{equation}\label{est:local-a-contract}
\begin{split}
\|\delta y\|_{\MS^q}+\|\delta z\|_{\MH^q}
& \lesssim_{q,K} (T-T_0) \|\delta u\|_{\MS^q}\\
&\quad+\big((T-T_0)^{\frac{1}{2}}
 +\|a\|_{L^2([T_0,T])}\big)\|\delta v\|_{\MH^q}.
\end{split}
\end{equation}
Let
$$
 \Lambda_a(t):=t+\int_0^t|a_r|^2dr,
$$
which is a continuous increasing function with
$\Lambda_a(T)=T+\|a\|_{L^2}^2<\infty$. Note that for any 
$\varepsilon >0$, there is a finite partition
$0=t_0<\cdots<t_N=T$ such that
\[
 |I_i|+\|a\|_{L^2(I_i)}^2\le\varepsilon,
 \qquad I_i=[t_i,t_{i+1}],
\]
with $N$ depending only on $(\varepsilon, T,\|a\|_{L^2})$. Then by choosing $\varepsilon$ small enough, depending only on $(q,K,T,\|a\|_{L^2} )$,  we see from  \eqref{est:local-a-contract} that $\Psi$ a strict contraction on every $I_i$.
Solving backwardly and telescopically over this partition proves the global existence
and uniqueness.

 We finally prove the global estimate \eqref{a-bsde-est}. In view of \eqref{a-bd-y1}, \eqref{a-bd-z} and \eqref{a-bd-g}, on any interval $[s,t]\subseteq[0,T]$, with terminal value $Y_t$, we have
\begin{equation*}
\begin{split}
\|Y\|_{\MS^q,[s,t]} + \| Z \|_{\MH^q,[s,t]}
\lesssim_{K,q}{}& \|Y_t\|_{\ML^q} + \|g(0)\|_{\M^q,[s,t]}
+(t-s)\|Y\|_{\MS^q,[s,t]}\\
&+\big((t-s)^{\frac12}+\|a\|_{L^2([s,t])}\big)
\|Z\|_{\MH^q,[s,t]}.
\end{split}
\end{equation*}
Consequently, on every interval of the above partition,
\be\label{a-bsde-est-local}
\|Y\|_{\MS^q,[s,t]}^q+\|Z\|_{\MH^q,[s,t]}^q
\le C_{K,q}\left(\E|Y_t|^q+\|g(0)\|_{\M^q,[s,t]}^q\right).
\ee

Applying \eqref{a-bsde-est-local} successively
backwardly on the intervals $I_i=[t_i,t_{i+1}]$, we obtain
\[
\sum_{i=0}^{N-1}\left(
\|Y\|_{\MS^q,I_i}^q+\|Z\|_{\MH^q,I_i}^q\right)
\le C_{K,q,N}\left(
\|\xi\|_{\ML^q}^q+
\sum_{i=0}^{N-1}\|g(0)\|_{\M^q,I_i}^q\right).
\]
Note that 
$
\sum_{i=0}^{N-1}\|g(0)\|_{\M^q,I_i}^q
\le \|g(0)\|_{\M^q,[0,T]}^q,
$
and
\[
\|Y\|_{\MS^q,[0,T]}^q
\le\sum_{i=0}^{N-1}\|Y\|_{\MS^q,I_i}^q,
\qquad
\|Z\|_{\MH^q,[0,T]}^q
\le N^{\frac q2-1}\sum_{i=0}^{N-1}\|Z\|_{\MH^q,I_i}^q,
\]
we obtain \eqref{a-bsde-est}. 


\end{proof}

\begin{rem}
	The deterministic nature of $a$ is essential in the preceding localization argument. If $a$ is stochastic, one generally needs additional BMO or
reverse-H\"older bound and lose integrability; see
\cite[Theorem 10]{BC08}.

	
\end{rem}

\begin{coro}\label{BSDE-line}

Suppose $\eta\in\ML^q$, $a\in L^2([0,T],\R^d)$, and
$b\in\M^q([0,T])$ for some $q\ge2$. Let $\alpha$ and $\gamma$ be
real-valued and $\beta$ be $\R^d$-valued bounded progressively measurable
processes, with $|\alpha|\vee|\beta|\vee|\gamma|<K$. Then there exists a
unique solution $(y,z) \in \MS^q([0,T]) \times \MH^q([0,T],\R^d)$ to the linear BSDE with integrable coefficients
\begin{equation*}
y_t= \eta + \int_t^T (b_r+ \alpha_r y_r + \beta_r z_r + \gam_r a_r z_r)dr - \int_t^T z_r dB_r.
\end{equation*}
Moreover, we have for any $t\in [0,T],$
\be\label{est-line}
\begin{split}
	\E|y_t|^q \le C_{T,K,q} \left[ \E|\eta|^q+ \|b\|_{\M^q,[t,T]}^q  \right]e^{2q \int_t^T(1+K^2+K^2 |a_r|^2)dr}.
\end{split}
\ee

\end{coro}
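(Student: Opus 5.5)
Existence and uniqueness follow directly from Lemma~\ref{a-bsde}. Take the generator $g_r(y,z,\MZ):=b_r+\alpha_r y+\beta_r z+\gamma_r\MZ$. It satisfies Assumption~\ref{assu:H0}: $g(0)=b\in\M^q$, and $g$ is $K$-Lipschitz in $(y,z,\MZ)$ because $\alpha,\beta,\gamma$ are bounded by $K$. With the terminal value $\eta\in\ML^q$, the lemma gives a unique $(y,z)\in\MS^q\times\MH^q$. So the only real content is the explicit exponential bound \eqref{est-line}, whose constant depends on $a$ only through $\int_t^T|a_r|^2dr$ in the exponent.

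For the estimate I would use a linearization and change of measure. Set $\Gamma_{t,s}:=\exp(\int_t^s\alpha_rdr)$ and define $Q$ by $dQ/dP=\ME(\int_0^\cdot(\beta_r+\gamma_r a_r)dB_r)_T$. Since $\beta+\gamma a$ is bounded by $K(1+|a_r|)$ with $a$ deterministic and in $L^2$, the integrand has deterministic square-integrable bound, so Novikov's condition holds trivially and $Q$ is an equivalent probability measure. The standard representation then gives
\[
y_t=\E^Q_t\Big[\Gamma_{t,T}\eta+\int_t^T\Gamma_{t,r}b_rdr\Big].
\]
The stochastic integral is a true $Q$-martingale because $z\in\MH^q$ and the density has all moments. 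Note $|\Gamma|\le e^{KT}$.

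Next I would return to $P$-moments by H\"older's inequality on the density. Write $\E^Q_t[X]=\E_t[\mathcal{E}_{t,T}X]$ with $\mathcal{E}_{t,T}:=\ME(\int(\beta+\gamma a)dB)_T/\ME(\int(\beta+\gamma a)dB)_t$. Conditional H\"older with exponents $(q/(q-1),q)$ gives
\[
|y_t|^q\le e^{qKT}\,\E_t\big[\mathcal{E}_{t,T}^{q/(q-1)}\big]^{q-1}\,\E_t\Big[|\eta|^q+\Big(\int_t^T|b_r|dr\Big)^q\Big]2^{q-1}.
\]
For a stochastic exponential with integrand $\theta$, $\E_t[\mathcal{E}_{t,T}^p]\le\exp(\frac{p(p-1)}2\int_t^T\|\theta_r\|_\infty^2dr)$. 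This follows by writing $\mathcal{E}^p=\ME(p\theta)\exp(\frac{p^2-p}{2}\int|\theta|^2)$ and bounding $|\theta_r|^2\le 2K^2(1+|a_r|^2)$ deterministically. Hence the density factor is at most $\exp(C_q\int_t^T(K^2+K^2|a_r|^2)dr)$. Taking expectations gives \eqref{est-line}, possibly after adjusting the exponent constant. With $p=q/(q-1)\le2$, the exponent $(q-1)\frac{p(p-1)}{2}\cdot2K^2=\frac{q}{q-1}K^2\le 2K^2$ lies comfortably below $2q(1+K^2+K^2|a|^2)$. The $e^{qKT}$ factor from $\Gamma$ goes into $C_{T,K,q}$, or alternatively into the "$1$" in the exponent.

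The main obstacle is the careful bookkeeping needed to get exactly the stated exponent $2q\int_t^T(1+K^2+K^2|a_r|^2)dr$ rather than some $C\int$. If the Girsanov route gives a slightly different form, I would instead apply It\^o's formula to $e^{\lambda(s)}|y_s|^q$ with $\lambda(s)=2q\int_t^s(1+K^2+K^2|a_r|^2)dr$. Young's inequality absorbs the cross term $q|y|^{q-1}K(1+|a|)|z|$ into the good term $\frac{q(q-1)}2|y|^{q-2}|z|^2$, costing $\frac{qK^2(1+|a|)^2}{q-1}|y|^q$. The term $|y|^{q-1}|b|$ is handled by taking the supremum over time. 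This yields precisely a Gronwall-type weight with deterministic rate $c(1+K^2+K^2|a_r|^2)$, since $a$ is deterministic.
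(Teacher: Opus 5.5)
Your proof is correct, but it obtains \eqref{est-line} by a different route from the paper. Existence and uniqueness come from Lemma~\ref{a-bsde}, exactly as you say.

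For the estimate, the paper uses no change of measure. It first reduces to $b\in\MM^q$ by passing to $y_u+\int_t^u b_r\,dr$. This moves $\int_t^T b_r\,dr$ into the terminal value and replaces the inhomogeneity by $-\alpha_u\int_t^u b_r\,dr$. It then applies It\^o's formula to $|y|^q$, absorbs the terms $K|y|^{q-1}|z|$ and $K|y|^{q-1}|a_r||z|$ into $\frac{q(q-1)}{2}|y|^{q-2}|z|^2$ by Young's inequality, and closes with backward Gronwall. That is where the weight $2q\int_t^T(1+K^2+K^2|a_r|^2)dr$ comes from.

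Your discounting--Girsanov--conditional H\"older argument checks out. This includes the true-martingale property of the $Q$-stochastic integral, which follows from BDG plus H\"older, using $z\in\MH^q$ and the fact that the density has all moments because $|\beta+\gamma a|^2\le 2K^2(1+|a|^2)$ is a deterministic integrable bound.

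Your route buys several things:
\begin{itemize}
\item $b\in\M^q$ is handled directly, since $b$ enters only through $(\int_t^T|b_r|dr)^q$, so no reduction step is needed.
\item You get a conditional bound $|y_t|^q\le C\,\E_t[|\eta|^q+(\int_t^T|b_r|dr)^q]$ rather than only a bound on $\E|y_t|^q$.
\item The exponent improves to $\frac{q}{q-1}K^2\int_t^T(1+|a_r|^2)dr$, with prefactor $2^{q-1}e^{qKT}$.
\item It isolates exactly where determinism of $a$ matters: in bounding the density moments by a deterministic constant.
\end{itemize}
The paper's It\^o--Gronwall computation, in turn, avoids measure changes altogether, and the same computation is reused for \eqref{est:Duyt} in Proposition~\ref{m-d}.

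Your fallback is unnecessary. If you did pursue it, handling $|y|^{q-1}|b|$ by a supremum would require an extra BDG estimate on $\E\sup|y|^q$; the paper sidesteps this with the shift by $\int_t^\cdot b_r\,dr$.
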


\begin{proof}
According to Lemma \ref{a-bsde}, there exists a unique solution $(y,z) \in \MS^q \times \MH^q.$ We only need to show \eqref{est-line}. Without loss of generality, we claim that we only need to show for any $b \in \MM^q,$
\be \label{simp}
\begin{split}
	\E|y_t|^q \le C_{T,K,q} \left[ \E|\eta|^q+ \|b\|_{\MM^q,[t,T]}^q  \right]e^{2q \int_t^T(1+K^2+K^2 |a_r|^2)dr}.
\end{split}
\ee 
Indeed, let $\by_u:= y_u + \int_t^u b_r dr,$ $u\ge t,$ and then $(\by,z)$ solve the linear BSDE
$$
\by_u = (\eta+\int_t^T b_r dr) + \int_u^T \left[\alpha_r(-\int_t^r b_r dr)+ \alpha_r \by_r + \beta_r z_r + \gam_r a_r z_r \right] dr- \int_u^T z_r dB_r, \ u\in [t,T].
$$
Now consider $(\bar{\eta},\bar{b}_r):=(\eta+\int_t^T b_r dr,-\alpha_r \int_t^r b_r dr)$ and we have 
$$
\|\bar{\eta}\|_{\ML^q}^q \le C_q(\| {\eta}\|_{\ML^q}^q + \|b \|_{\M^q,[t,T]}^q), \ \ \|\bar{b} \|_{\MM^q,[t,T]}^q \le K^q T \| b\|_{\M^q,[t,T]}^q,
$$
which proves our claim.

To show \eqref{simp}, apply It\^o's formula to $|y.|^q$ on $[t,T],$ and we have
\begin{equation*}
\begin{split}
|y_t|^q = &\ |\eta |^q + q \int_t^T |y|^{q-1} \text{sign}(y) (b_r + \alpha_r y_r + \beta_r z_r + \gam_r a_r z_r) dr\\
& - \frac12 q(q-1) \int_t^T |y|^{q-2}	|z|^2 dr- q \int_t^T |y|^{q-1} \text{sign}(y) z dB_r.
\end{split}
\end{equation*}
Let $M_t:=q \int_0^t |y |^{q-1} \text{sign}(y) z dB_r$ be the martingale on $[0,T].$ It follows that
\be\label{ito-0}
\begin{split}
&|y_t|^q +\frac12 q(q-1) \int_t^T |y_r|^{q-2}	|z_r|^2 dr+  (M_T-M_t)\\
&\ \ \   \le |\eta |^q  + q\int_t^T |y_r|^{q-1} (|b_r|  + K|y_r| +K |z_r| + K |a_r z_r|) dr .
\end{split}
\ee
Note that by Young's inequality and Cauchy's inequality for products, we have 
\be\label{pd-0}
\begin{split}
	&  |y_r|^{q-1} ( |b_r |  + K|y_r| +K |z_r| + K |a_r z_r|)  \\
	&\ \ \ \le \frac{q-1}{q}|y_r|^q + \frac{1}{q}  |b_r |^q + K |y_r|^q + K^2 |y_r|^q+ \frac14 |y_r|^{q-2} |z_r|^2\\
	&\ \ \ \ \ \ \  + K^2|y_r|^q |a_r |^2 + \frac14 |y_r|^{q-2} |z_r|^2\\
	&\ \ \ \le \frac{1}{q} |b_r |^q +(\frac{q-1}{q}+K+K^2 + K^2 |a_r |^2) |y_r|^q  + \frac12 |y_r|^{q-2} |z_r|^2 
	\end{split}
\ee
In view of \eqref{ito-0} and \eqref{pd-0}, we have 
\begin{equation*}
|y_t|^q +  (M_T-M_t) \le |\eta |^q+ 2q\int_t^T (1+K^2 + K^2 |a_r|^2 ) |y_r|^q dr + \int_t^T |b_r|^q dr.
\end{equation*}
It follows by taking the expectation on both sides of the above inequality and Gronwall's lemma that 
\begin{equation*}
\E|y_t|^q \le \left[\E|\eta|^q + \E[\int_t^T |b_r |^q dr ]\right]  e^{2q \int_t^T(1+K^2+K^2 |a_r|^2)dr}.
\end{equation*}

\end{proof}

%

	
To apply the Malliavin calculus argument for a non-standard and non-smooth BSDE like \eqref{BSDE2}, we need the following stability result.

\begin{lem}\label{stable1}
Suppose that $(\xi^i,g^i)$ satisfies Assumption~\ref{assu:H0} for some
$q\ge2$, and $a^i\in L^2([0,T],\R^d)$ is deterministic, $i=1,2$.
Let $(y^i,z^i)$ be the solution to \eqref{BSDE2} driven by
$(\xi^i,g^i,a^i)$ respectively. Then
\be\label{a-bsde-stab1}
\begin{split}
& \|y^1- y^2\|_{\MS^q,[0,T] } + \| z^1 -z^2 \|_{\MH^q,[0,T]  } \\
& \ \ \ \le C (\| \xi^1- \xi^2 \|_{\ML^q}  + \|g^1(\Theta^2)-g^2(\Theta^2)  \|_{\M^q,[0,T]} + \|  a^1- a^2\|_{L^2([0,T])} ),
\end{split}
\ee
where $ \Theta^2_r:= ( y^2_r, z^2_r, a^2_r z^2_r)$ and $C$ depends only on
$K,q,T,\|a^2\|_{L^2([0,T])}$ and $\|z^1\|_{\MH^q([0,T])}$.

\end{lem}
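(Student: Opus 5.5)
The natural approach is to write the difference of the two equations as a frozen BSDE of the form \eqref{BSDE2} with coefficient $a^2$, and apply the a priori estimate \eqref{a-bsde-est} of Lemma~\ref{a-bsde}.

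Set $\delta y:=y^1-y^2$, $\delta z:=z^1-z^2$, $\delta\xi:=\xi^1-\xi^2$ and $\Theta^1_r:=(y^1_r,z^1_r,a^1_rz^1_r)$. Then
\[
\delta y_t=\delta\xi+\int_t^T\big[g^1_r(\Theta^1_r)-g^1_r(\Theta^2_r)+g^1_r(\Theta^2_r)-g^2_r(\Theta^2_r)\big]dr-\int_t^T\delta z_rdB_r .
\]
Inside the first difference we split $a^1_rz^1_r-a^2_rz^2_r=a^2_r\delta z_r+(a^1_r-a^2_r)z^1_r$. We then define the new driver
\[
\tilde g_r(y,z,\MZ):=g^1_r\big(y^2_r+y,\;z^2_r+z,\;a^2_rz^2_r+\MZ+(a^1_r-a^2_r)z^1_r\big)-g^1_r(\Theta^2_r)+g^1_r(\Theta^2_r)-g^2_r(\Theta^2_r).
\]
With this choice, $(\delta y,\delta z)$ solves \eqref{BSDE2} with data $(\delta\xi,\tilde g,a^2)$. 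The driver $\tilde g$ is progressively measurable and $K$-Lipschitz in $(y,z,\MZ)$, uniformly in $(r,\omega)$, so Assumption~\ref{assu:H0}(ii) holds with the same $K$.

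By uniqueness in Lemma~\ref{a-bsde}, $(\delta y,\delta z)$ is the solution in $\MS^q\times\MH^q$; membership follows since each $(y^i,z^i)$ lies there. The lemma then gives
\[
\|\delta y\|_{\MS^q}+\|\delta z\|_{\MH^q}\le C\big(\|\delta\xi\|_{\ML^q}+\|\tilde g(0)\|_{\M^q}\big),
\]
with $C=C(K,q,T,\|a^2\|_{L^2})$. Moreover,
\[
|\tilde g_r(0)|\le K|a^1_r-a^2_r||z^1_r|+|g^1_r(\Theta^2_r)-g^2_r(\Theta^2_r)|.
\]

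The remaining task is to control the cross term $(a^1-a^2)z^1$, and this is where the deterministic nature of $a^i$ is used, exactly as in \eqref{ineq-1}. By Cauchy--Schwarz in time,
\[
\E\Big[\int_0^T|a^1_r-a^2_r||z^1_r|dr\Big]^q\le\|a^1-a^2\|_{L^2}^q\,\E\Big[\int_0^T|z^1_r|^2dr\Big]^{q/2}=\|a^1-a^2\|_{L^2}^q\|z^1\|_{\MH^q}^q .
\]
This produces the term $\|a^1-a^2\|_{L^2}$ in \eqref{a-bsde-stab1}, with a constant depending on $\|z^1\|_{\MH^q}$, and completes the estimate.

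I expect the only delicate point to be checking that the shifted driver $\tilde g$ is admissible, with $\tilde g(0)\in\M^q$. This follows from the bound above together with $g^i(\Theta^2)$ being in $\M^q$, which holds since $g^i(0)\in\M^q$ and $\Theta^2$ has the integrability established in Lemma~\ref{a-bsde}. Because the estimate of Lemma~\ref{a-bsde} already handles the partition argument, no new localization in time is needed.
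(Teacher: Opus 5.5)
Your proof is correct and follows essentially the paper's route. Both arguments write $(\delta y,\delta z)$ as a solution of a BSDE of type \eqref{BSDE2} with coefficient $a^2$, and treat $(a^1-a^2)z^1$ as a source term controlled by Cauchy--Schwarz in time, which uses that $a^1-a^2$ is deterministic. The only difference is packaging: the paper linearizes with bounded coefficients $\alpha,\beta,\gamma$ and reruns the local estimate on an energy partition for $a^2$ before telescoping, whereas your shifted $K$-Lipschitz driver $\tilde g$ lets you invoke the global estimate \eqref{a-bsde-est} directly, which is slightly cleaner.
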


\begin{proof}
	
Let $\delta (y,z,a ):=(y^1-y^2, z^1-z^2, a^1-a^2 ).$
It follows that $(\delta y, \delta z)$ satisfies 
\begin{equation*}
\delta y_t = \delta \xi + \int_t^T[\alpha_r \delta y_r + \beta_r \delta z_r + \gamma_r(\delta a_r z_r^1 + a^2_r \delta z_r ) + \delta g_r] dr - \int_t^T \delta z_r dB_r,
\end{equation*}
where 
\be\nonumber
\begin{split}
\alpha_r := & \frac{g^1_r(y^1_r, z^1_r, a^1_r z^1_r)- g^1_r(y^2_r, z^1_r, a^1_r z^1_r )}{\delta y_r}1_{ [|\delta y_r|>0 ]},\\
\beta_r := & \frac{g^1_r(y^2_r, z^1_r, a^1_r z^1_r)- g^1_r(y^2_r, z^2_r, a^1_r z^1_r)}{|\delta z_r|^2}
\delta z_r\,1_{ [|\delta z_r|>0 ]},\\
\gam_r := & \frac{g^1_r(y^2_r, z^2_r, a^1_r z^1_r)- g^1_r(y^2_r, z^2_r, a^2_r z^2_r)}{a^1_r z^1_r- a^2_r z^2_r }1_{ [|a^1_r z^1_r- a^2_r z^2_r|>0 ]}, \\ 
\delta g_r := &  g^1_r(y^2_r, z^2_r, a^2_r z^2_r) - g^2_r(y^2_r, z^2_r, a^2_r z^2_r).
\end{split}
\ee

The local estimate used in the proof of Lemma~\ref{a-bsde} applies to this
linear BSDE on every interval $I=[s,t]$ satisfying
$|I|+\|a^2\|_{L^2(I)}^2\le\varepsilon(q,K)$, and gives
\begin{equation*}
\begin{split}
\|\delta y\|_{\MS^q,I}^q+\|\delta z\|_{\MH^q,I}^q
\le C_{K,q}\big(&\E|\delta y_t|^q+\|\delta g\|_{\M^q,I}^q
+\|\delta a\,z^1\|_{\M^q,I}^q\big).
\end{split}
\end{equation*}
Moreover, H\"older's inequality gives
\[
\|\delta a\,z^1\|_{\M^q,I}^q
\le \|\delta a\|_{L^2(I)}^q\|z^1\|_{\MH^q(I)}^q.
\]
It follows that, on every interval of an energy partition associated with
$a^2$,
\[
\|\delta y\|_{\MS^q,I}^q+\|\delta z\|_{\MH^q,I}^q
\le C_{K,q}\left(\E|\delta y_t|^q
+\|\delta g\|_{\M^q,I}^q
+\|z^1\|_{\MH^q([0,T])}^q\|\delta a\|_{L^2(I)}^q\right).
\]
In view of the fact $\sum_i\|\delta a\|_{L^2(I_i)}^q\le
\|\delta a\|_{L^2([0,T])}^q$, the global estimate \eqref{a-bsde-stab1} is then obtained by a telescopic
argument as in the proof of Lemma~\ref{a-bsde}.


\end{proof}

Now let us study the Malliavin differentiability of BSDE \eqref{BSDE2}. 
In the following we write $\Theta_t= (Y_t, Z_t, a_t Z_t)$ and $\nabla  g= (\partial_y, \partial_z, \partial_{\MZ} )g$.


\begin{assumption}\label{assu:H1}
Suppose that Assumption~\ref{assu:H0} holds for some $q\ge2$. In addition, for any $t \in [0,T],$ $g_t(\cdot,\cdot,\cdot)$ is continuously differentiable on $\R^{d+2}.$
Moreover, $\xi \in \D^{1,2}$ and
$g_\cdot(y,z,\MZ) \in \bL^{1,2}$ for every $(y,z,\MZ) \in \R^{d+2}$.
The random field $D_ug_r(\cdot)$ admits a jointly measurable version such
that, for $du\otimes dr\otimes dP$-a.e. $(u,r,\omega)$,
$
 D_ug_r(\cdot)\in\MC_b(\R^{d+2},\R^d).
$
Finally, write $g_r(0):=g_r(0,0,0)$ and assume that for some $K_0>0$,
\begin{equation*}
\|\xi\|^q_{\ML^q} +  \| g(0) \|_{\M^q}^q
+ \E \left[ \int_0^T |D_{u} \xi |^q du
+  \int_0^T \int_0^T\|D_u g_r(\cdot)\|_{\MC_b}^q drdu\right] <K_0.
\end{equation*}

\end{assumption}

\begin{prop}\label{m-d}
Suppose that Assumption~\ref{assu:H1} holds with $q\ge2$ and that
$a\in L^2([s,T],\R^d)$ is deterministic with some $s\in[0,T)$. Let
$(Y,Z) \in \MS^q([s,T]) \times \MH^q([s,T])$ be the unique solution to
\eqref{BSDE2} given by Lemma~\ref{a-bsde}.
Then
$(Y,Z) \in \bL^{1,2}([s,T],\R^{d+1})$, and there is a jointly measurable
version of $(D_uY_t,D_uZ_t)_{0\le u\le T,\,s\le t\le T}$ satisfying
$D_uY_t=D_uZ_t=0$ for $s\le t<u\le T$ and, for
$0\le u\le t$ and $s\le t\le T$,
\begin{equation*}
D_u Y_t = D_u \xi  + \int_t^T [D_u g_r + \nabla g_r \,   D_u\Theta_r    ](\Theta_r )dr- \int_t^T D_u Z_r dB_r.
\end{equation*}
Moreover, for every $t\in[s,T]$ and a.e. $u\in[0,t]$,
\be\label{est:Duyt}
 \E|D_uY_t|^q
 \le\left[\E|D_u\xi|^q
 +\int_t^T\E\|D_ug_r(\cdot)\|_{\MC_b}^qdr\right]
 \exp\left\{2q\int_t^T
       (1+K^2+K^2|a_r|^2)dr\right\}.
\ee
Finally, $(D_tY_t)_{t\in[s,T]}$ is a version of
$(Z_t)_{t\in[s,T]}$.  Consequently, $Z\in\MM^q([s,T],\R^d)$ and
\be\label{m-d-zq}
\begin{split}
 \int_s^T\E|Z_t|^qdt
 \le \left[\int_s^T\E|D_t\xi|^qdt
 +\int_s^T\int_t^T
 \E\|D_tg_r(\cdot)\|_{\MC_b}^qdrdt\right]
 e^{ 2q\int_s^T(1+K^2+K^2|a_r|^2)dr }.
\end{split}
\ee

\end{prop}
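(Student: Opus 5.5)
The plan is to follow the classical Malliavin differentiability argument for Lipschitz BSDEs (El Karoui--Peng--Quenez), adapted to the deterministic but only $L^2$-integrable coefficient $a$. The estimates come from Lemma~\ref{a-bsde}, Corollary~\ref{BSDE-line} and Lemma~\ref{stable1}.

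\textbf{Step 1: differentiability along Picard iterates.} Start from $(Y^0,Z^0)=(0,0)$ and define $(Y^{n+1},Z^{n+1})$ as the solution of \eqref{BSDE3} with $(u,v)=(Y^n,Z^n)$, i.e.\ with the frozen driver $g_r(\Theta^n_r)$, where $\Theta^n_r=(Y^n_r,Z^n_r,a_rZ^n_r)$. On each interval $I_i$ of the partition from Lemma~\ref{a-bsde}, the map $\Psi$ is a contraction, so $(Y^n,Z^n)\to(Y,Z)$ in $\MS^q\times\MH^q$. This holds first on the last interval and then backwards, after restarting the iteration with the terminal value $Y_{t_{i+1}}$.

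By induction, $Y^n,Z^n\in\bL^{1,2}$. The driver $g_r(\Theta^n_r)$ is in $\bL^{1,2}$ by the random-field chain rule \eqref{random-field-chain-rule}, using the boundedness of $\nabla g$ and the integrability of $\|D_ug_r\|_{\MC_b}$ from Assumption~\ref{assu:H1}. A BSDE with a fixed $\bL^{1,2}$ driver and a $\D^{1,2}$ terminal value has a Malliavin differentiable solution. Its derivative solves
\begin{equation*}
D_uY^{n+1}_t=D_u\xi+\int_t^T\big[D_ug_r+\nabla g_r\,D_u\Theta^n_r\big](\Theta^n_r)dr-\int_t^TD_uZ^{n+1}_rdB_r ,
\end{equation*}
and $D_uY^{n+1}_t=D_uZ^{n+1}_t=0$ for $t<u$.

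\textbf{Step 2: uniform bounds and closability.} The main obstacle is to bound $(D_uY^n,D_uZ^n)$ uniformly in $n$ in $L^2(du;\MS^2\times\MH^2)$. The term $\partial_{\MZ}g\, a_r D_uZ^n_r$ carries the unbounded coefficient $a$. I will handle it with the same contraction estimate \eqref{est:local-a-contract}, applied for each fixed $u$ on the partition intervals $I_i$. This works because the map $(D_uY^n,D_uZ^n)\mapsto(D_uY^{n+1},D_uZ^{n+1})$ is affine with the same Lipschitz structure, so the uniform bound follows.

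Next, the differences $(D_uY^{n+1}-D_uY,\ D_uZ^{n+1}-D_uZ)$ should converge to zero, where $(D_uY,D_uZ)$ denotes the solution of the limiting linear BSDE stated in the proposition. That limiting equation is solvable by Corollary~\ref{BSDE-line} with coefficients $\alpha=\partial_yg$, $\beta=\partial_zg$, $\gamma=\partial_{\MZ}g$ and source $b=D_ug_r(\Theta_r)$. I will prove the convergence via Lemma~\ref{stable1}-type estimates. The error terms $\nabla g(\Theta^n)-\nabla g(\Theta)$ tend to zero in measure and are bounded by $K$, so they vanish by dominated convergence. For the factor multiplying $a_r$, I will use Hölder's inequality with $\|a\|_{L^2(I_i)}$ together with a subsequence argument. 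Since $D$ is closed, this gives $(Y,Z)\in\bL^{1,2}$ with the stated equation, and a jointly measurable version follows from measurability in $u$ of the solution map.

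\textbf{Step 3: estimates.} For fixed $u$, the estimate \eqref{est:Duyt} is exactly \eqref{est-line} applied to this linear BSDE on $[t,T]$, with $\eta=D_u\xi$ and $b_r=D_ug_r(\Theta_r)$. One bounds $|b_r|\le\|D_ug_r(\cdot)\|_{\MC_b}$ and uses the sharper form \eqref{simp}, whose right-hand side carries no constant, since $b\in\MM^q$.

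For the identity $D_tY_t=Z_t$, I will use the standard argument: write $Y_t=Y_s-\int_s^tg\,dr+\int_s^tZ\,dB$ and differentiate for $u\le t$. This gives $D_uY_t=Z_u+\int_u^t(\cdots)dr+\int_u^tD_uZ\,dB$, and letting $t\downarrow u$ yields $D_uY_u=Z_u$ for a.e.\ $u$ (the right-continuous version in $t$). Finally, setting $u=t$ in \eqref{est:Duyt} and integrating over $t\in[s,T]$ gives \eqref{m-d-zq}, once the exponential factor is bounded by its value over $[s,T]$.
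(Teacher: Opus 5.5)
Your architecture matches the paper's. The paper runs Picard iterates and uses closability of $D$. The linearised derivative map is a contraction, uniformly in $n$, on an energy partition with constant $\lesssim |I|+\|a\|_{L^2(I)}^2$. Coefficient errors are removed by continuity of $\nabla g, D_ug$ and dominated convergence. Then \cite[Proposition~5.3]{KPQ97} gives $D_tY_t=Z_t$, and It\^o's formula on $|y^u|^q$ gives \eqref{est:Duyt}.

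\textbf{The gap is in Step~1.} You claim that $g_r(\Theta^n_r)\in\bL^{1,2}$ ``by the random-field chain rule, using the boundedness of $\nabla g$''. The chain rule produces the component $\partial_{\MZ}g_r(\Theta^n_r)\,a_rD_uZ^n_r$. Membership in $\bL^{1,2}$ then requires $\E\int_0^T\int_0^T|a_r|^2|D_uZ^n_r|^2\,du\,dr<\infty$. Boundedness of $\nabla g$ does not help, since it is $a$ that multiplies the derivative. Moreover, $a\in L^2$ together with $DZ^n\in L^2(du\,dr\,dP)$ does not imply this bound.

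A concrete counterexample: take $d=1$, $g(y,z,\MZ)=\MZ$, $\xi=|B_T|$ and $a_r=(T-r)^{-1/4}$; all hypotheses hold. Then $Z^1_r=2\Phi(B_r/\sqrt{T-r})-1$ and $\E|D_uZ^1_r|^2=\frac{2}{\pi\sqrt{(T-r)(T+r)}}$. Hence $\E|a_rD_uZ^1_r|^2\asymp (T-r)^{-1}$ near $r=T$, so already $g(\Theta^1)\notin\bL^{1,2}$. The result you invoke for BSDEs with $\bL^{1,2}$ drivers therefore does not apply from the second iterate on.

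\textbf{How to repair it.} This is exactly why the paper truncates $a$ componentwise to a bounded $a^{(m)}$.
\begin{itemize}
\item With bounded $a^{(m)}$, each Picard step is a classical Lipschitz BSDE with $\bL^{1,2}$ driver.
\item The contraction constants depend on $a^{(m)}$ only through $\|a^{(m)}\|_{L^2(I)}\le\|a\|_{L^2(I)}$, so they are uniform in $m$.
\item Lemma~\ref{stable1} gives $\|Y-y^{(m)}\|_{\MS^q}+\|Z-z^{(m)}\|_{\MH^q}\le C\|a-a^{(m)}\|_{L^2}$.
\item The remainder gains the vanishing term $\|a^{(m)}-a\|_{L^2}^2\int_0^T\|z^u\|_{\MH^2}^2du$, and one takes $\limsup_m\limsup_n$.
\end{itemize}
Alternatively, keep $a$ and never ask for $\bL^{1,2}$ of the driver. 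Show directly that $\int_t^Tg_r(\Theta^n_r)dr\in\D^{1,2}$: cut off on $\{|a_r|\le m\}$ and use closability, with the domination $\E\int_0^T(\int_u^T|a_r||D_uZ^n_r|dr)^2du\le\|a\|_{L^2}^2\,\E\int_0^T\int_0^T|D_uZ^n_r|^2du\,dr$. Then use $Y^{n+1}_t=\E_t[\xi+\int_t^Tg_r(\Theta^n_r)dr]$ and \cite[Lemma~5.1]{KPQ97} for $Z^{n+1}$. You need the same observation when you differentiate $\int_s^tg_r(\Theta_r)dr$ in Step~3.

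With this repaired, the rest of your plan goes through. Your explicit uniform-in-$n$ bound actually makes precise a point the paper leaves tacit: its inequality $(1-\rho_{I_j})L_j\le CF_{j+1}$ presupposes $L_j<\infty$.

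Two smaller points:
\begin{itemize}
\item Restarting on $I_i$ with terminal value $Y_{t_{i+1}}$ requires $Y_{t_{i+1}}\in\D^{1,2}$ at that specific time. This follows from the sup-in-time convergence of the derivatives on the previous interval.
\item Corollary~\ref{BSDE-line} is scalar, so applying it componentwise gives \eqref{est:Duyt} only up to a factor depending on $d$. To get the constant-free form, run the $|y|^q$ It\^o argument on the vector $y^u$ directly.
\end{itemize}
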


\begin{proof}
Without loss of generality, we assume $s=0$. Let $a^{(m)}$ be obtained by truncating every component of $a$ to
$[-m,m]$, $m\in\N$.  Set $(y^{(0,m)},z^{(0,m)})=(0,0)$ and recursively let
$(y^{(n+1,m)},z^{(n+1,m)})\in\MS^q\times\MH^q$ be the solution to
\begin{equation*}
y^{(n+1,m)}_t = \xi + \int_t^T g	_r(y^{(n,m)}_r, z^{(n,m)}_r,  a_r^{(m)} z^{(n,m)}_r) dr - \int_t^T z^{(n+1,m)}_r dB_r.
\end{equation*}
Note that $\|a^{(m)} \|_{L^2} \le \|a\|_{L^2},$ $\forall m,$ in view of the proof of Lemma \ref{a-bsde},
$(y^{(n,m)},z^{(n,m)})$ converges to $(y^{(m)},z^{(m)})$ as $n$ goes to infinity in the $\MS^q({[0, T]}) \times \MH^q({[0, T]},\R^d)$ norm, where $(y^{(m)},z^{(m)})$ is the solution to 
\begin{equation*}
y^{( m)}_t = \xi + \int_t^T g	_r(y^{( m)}_r, z^{( m)}_r,  a_r^{(m)} z^{( m)}_r) dr - \int_t^T z^{( m)}_r dB_r.
\end{equation*}
On the other hand, according to Lemma \ref{stable1}, 
$$
\|Y-y^{(m)}\|_{\MS^q,[0,T]} + \|Z-z^{(m)} \|_{\MH^q,[0,T]} \le C \|a-a^{(m)}\|_{L^2},
$$
for a constant $C>0$ depending on $\|a\|_{L^2}$, which implies that $(y^{(n,m)},z^{(n,m)})$ converges to $(Y,Z)$ in the $\MS^q({[0 , T]}) \times \MH^q({[0, T]},\R^d)$ norm as $m$ and $n $ go to infinity. It follows that, by the closedness of the Malliavin differential operator $D$, we only need to show $(y^{(n,m)},z^{(n,m)})$ is a Cauchy sequence in $ \bL^{1,2}({[0, T]}) \times \bL^{1,2}({[0, T]},\R^d)$ to conclude $(Y,Z) \in \bL^{1,2}({[0, T]}) \times \bL^{1,2}({[0, T]},\R^d).$

Applying the random-field chain rule \eqref{random-field-chain-rule} for the driver, and \cite[Lemma 5.1]{KPQ97} for the stochastic
integral, we see that, via an induction argument, 
$(y^{(n+1,m)},z^{(n+1,m)})\in\bL^{1,2}([0,T])\times
\bL^{1,2}([0,T],\R^d)$ and, for $0\le u\le t\le T$,
\be\label{nm-deri}
D_u y^{(n+1,m)}_t= D_u \xi + \int_t^T [D_u g_r(\tTheta^{(n,m)}_r ) + \nabla g_r(\tTheta^{(n,m)}_r) D_u \tTheta^{(n,m)}_r ] dr - \int_t^T D_u z^{(n+1,m)}_r dB_r,
\ee
where $\tTheta^{(n,m)} = (y^{(n,m)},z^{(n,m)},
a^{(m)}z^{(n,m)})$.
We claim that $(y^{(n,m)},z^{(n,m)})$ converges under
$\bL^{1,2}$-norm to $(y^\cdot,z^\cdot)$, where
$(y^u_t,z^u_t;\ u\le t\le T)\in\MS^q\times\MH^q$ is the unique solution
to the linear BSDE
\be\label{M-deri}
  y_t^u = D_u \xi  + \int_t^T ( D_u g_r(\Theta_r) + \nabla g_r(\Theta_r) \Theta^u_r  ) dr- \int_t^T  z^u_r dB_r,
\ee
with $\Theta^u=(y^u,z^u,a z^u)$,
which is nothing but $d$ copies of the scalar linear BSDE and thus well-posed by Corollary~\ref{BSDE-line}.
Indeed, note that
\[
 \int_0^T\E\left(\int_u^T|D_ug_r(\Theta_r)|dr\right)^qdu
 \le T^{q-1}\E\int_0^T\int_u^T
 \|D_ug_r(\cdot)\|_{\MC_b}^qdrdu<\infty.
\]
Thus the inhomogeneous term in \eqref{M-deri} belongs to $\M^q$ and Corollary~\ref{BSDE-line} implies $(y^u, z^u) \in \MS^q \times \MH^q$. 
According to \eqref{nm-deri}, \eqref{M-deri} and the standard estimate for
BSDEs (or Lemma \ref{a-bsde}), we have
\be\label{decom-d}
\begin{split}	
	\|D_u y^{(n+1,m)} - y^u \|_{\MS^2,[u,T]}^2 +  \|D_u z^{(n+1,m)} - z^u \|_{\MH^2,[u,T]}^2 \le C \E\left[ \int_u^T  (d^1_r  +  d^2_r  + d^3_r)  dr \right]^2 ,
\end{split}
\ee
where 
\begin{equation*}
\begin{split}
	&d^1_r:=|D_u g_r(\tTheta^{(n,m)}_r)-  D_u g_r(\Theta_r)|,\\
	&d^2_r:=|\partial_{\MZ}g_r(\tTheta^{(n,m)}_r) a^{(m)}_r D_u z^{(n,m)}_r - \partial_{\MZ}g_r(\Theta_r) a_r z^u_r|,\\
	&d^3_r:= |\partial_y g_r(\tTheta^{(n,m)}_r ) D_u y^{(n,m)}_r - \partial_y g_r(\Theta_r )  y^u_r|+|\partial_z g_r(\tTheta^{(n,m)}_r ) D_u z^{(n,m)}_r - \partial_z g_r(\Theta_r )  z^u_r|,
\end{split}	
\end{equation*}
We estimate the
three terms on the right hand side of \eqref{decom-d} after integration over $u$. First, note that $\tTheta^{(n,m)}\to\Theta$ in probability. By the bounded convergence theorem, we have 
\begin{equation}\label{R1}
\begin{split}
 R^1_{n,m}
 :=\int_0^T\E\left(\int_u^Td^1_rdr\right)^2du \le T\E\int_0^T\int_u^T
 |D_ug_r(\tTheta_r^{(n,m)})-D_ug_r(\Theta_r)|^2drdu \rightarrow 0.
\end{split}
\end{equation}
For $\ell\in\{y,z,\MZ\}$, write
$
 \Delta^{n,m}\partial_\ell g_r
 :=\partial_\ell g_r(\tTheta_r^{(n,m)})
   -\partial_\ell g_r(\Theta_r).
$
For the second term, by the fact that $a$ is determinisitic,
\begin{equation}\label{R2}
\begin{split}
 \int_0^T\E\left(\int_u^Td^2_rdr\right)^2du 
 \lesssim_K \|a\|_{L^2}^2
 \int_0^T\|D_uz^{(n,m)}-z^u\|_{\MH^2([u,T])}^2du   +R^2_{n,m},
\end{split}
\end{equation}
Here
$
R^2_{n,m}:=  \|a\|_{L^2}^2
 \E\int_0^T\int_0^r
 |\Delta^{n,m}\partial_{\MZ}g_r|^2|z_r^u|^2dudr + \|a^{(m)}-a\|_{L^2}^2
 \int_0^T\|z^u\|_{\MH^2([u,T])}^2du,
$
which converges to zero as before.
The same
argument yields
\be\label{d3}
\begin{split}
 &\int_0^T\E\left(\int_u^Td^3_rdr\right)^2du\\
 &\quad\le C_K(T+T^2)\int_0^T\left(
 \|D_uy^{(n,m)}-y^u\|_{\MS^2([u,T])}^2
 +\|D_uz^{(n,m)}-z^u\|_{\MH^2([u,T])}^2\right)du
 +R^3_{n,m},
\end{split}
\ee
where
\[
 R^3_{n,m}:=C_KT\E\int_0^T\int_0^r
 \left(|\Delta^{n,m}\partial_yg_r|^2|y_r^u|^2
 +|\Delta^{n,m}\partial_zg_r|^2|z_r^u|^2\right)dudr \rightarrow 0.
\]
Set
\[
A_{n,m}:=\int_0^T\left(
\|D_u y^{(n,m)}-y^u\|_{\MS^2([u,T])}^2
+\|D_u z^{(n,m)}-z^u\|_{\MH^2([u,T])}^2\right)du.
\]
Combining \eqref{decom-d}--\eqref{d3}, we obtain
\[
 A_{n+1,m}\le \rho A_{n,m}+R_{n,m},
\]
where $\rho$ and $R_{n,m}$ are constants such that 
\[
 \rho\le C_{K,q,T}\bigl(T+\|a\|_{L^2}^2\bigr),\ \ \ \limsup_{m\to\infty}\limsup_{n\to\infty} R_{n,m}=0.
\]
To show the convergence, we need to localize the above estimate. For an interval $I=[r,s]\subset[0,T]$,
let
\[
\begin{split}
 A_{n,m}^{I}:={}&\int_0^s\Big(
 \|D_uy^{(n,m)}-y^u\|_{\MS^2([u\vee r,s])}^2
 +\|D_uz^{(n,m)}-z^u\|_{\MH^2([u\vee r,s])}^2\Big)du,\\
 E_{n,m}^{s}:={}&\int_0^s\E
 \big|D_uy_s^{(n,m)}-y_s^u\big|^2du.
\end{split}
\]
Keeping track of an extra terminal-value in the above estimates and
localizing \eqref{decom-d}--\eqref{d3}, we have
\be\label{eq:local-estA}
 A_{n+1,m}^{I}
 \le C E_{n+1,m}^{s}+\rho_I A_{n,m}^{I}+R_{n,m}^{I},
 \qquad
 \rho_I\le C_{K,q,T}
 \bigl(|I|+\|a\|_{L^2(I)}^2\bigr),
\ee
where the localized remainder $R_{n,m}^{I}$ has the same vanishing
property as $R_{n,m}$. Note that 
$\mu(dt)=(1+|a_t|^2)dt$ is finite and nonatomic. Hence we may choose a partition $0=t_0<\cdots<t_J=T$ such that
$\rho_{[t_j,t_{j+1}]}<1$ for every $j$.
It remains to perform a backward induction to complete the convergence. Set $I_j=[t_j,t_{j+1}]$ and
\[
 L_j:=\limsup_{m\to\infty}\limsup_{n\to\infty}A_{n,m}^{I_j},
 \qquad
 F_j:=\limsup_{m\to\infty}\limsup_{n\to\infty}E_{n,m}^{t_j}.
\]
Then by taking the limit in \eqref{eq:local-estA}, we have
\[
 (1-\rho_{I_j})L_j\le C F_{j+1}.
\]
Note that for any $m,n\ge1$, on interval $I_{J-1}$, we have
$E_{n,m}^{T}=0$ and thus $L_{J-1}=F_J=0$. Moreover, by definition we have
\[
 E_{n,m}^{t_j}\le A_{n,m}^{I_j},
 \qquad\text{and consequently}\qquad F_j\le L_j.
\]
It follows that $F_{J-1}=0$. Then repeat the same argument above and we obtain $L_j=F_j=0$ for any $j=0,...,J-2,$ which implies the Malliavin derivatives converge on every interval of the
partition. Hence on $[0,T]$, 
\[
 (Y,Z)\in\bL^{1,2}([0,T],\R^{d+1}),
 \qquad (D_uY_t,D_uZ_t)=(y^u_t,z^u_t),\quad u\le t.
\]
for $du\,dt\,dP$-almost every $(u,t,\omega)$.  

By the same argument as \cite[Proposition 5.3]{KPQ97}, we have
$
 Z_u=y_u^u,\  du\otimes dP\text{-a.e.   }
$
Finally, applying It\^o's formula to $|y^u|^q$, and by Young's inequality and Gronwall's lemma, we have that, for
$0\le u\le t$ and $t\in[s,T]$,
\[
 \E|y_t^u|^q
 \le\left[\E|D_u\xi|^q
 +\int_t^T\E\|D_ug_r(\cdot)\|_{\MC_b}^qdr\right]
 \exp\left\{2q\int_t^T(1+K^2+K^2|a_r|^2)dr\right\}.
\]
Moreover, taking $t=u$, integrating over $u\in[s,T]$, and applying $Z_u=D_uY_u$, we have
\eqref{m-d-zq} and complete the proof.

\end{proof}

\begin{rem}\label{rem:malliavin-linear-growth}
In view of the classical Lipschitz condition introduced in \cite[Proposition~5.3]{KPQ97}, the uniform-in-state bound on $D_ug_r(\cdot)$ in
Assumption~\ref{assu:H1} can be replace by a similar Lipschitz kind of conditions to conclude
Proposition~\ref{m-d}: there exists
$K>0$ such that
\[
 |D_ug_r(x)-D_ug_r(x')|\le K|x-x'|,
 \qquad |D_ug_r(0)|\le R_{u,r},
\]
where $R_{u,r}$ is a jointly measurable nonnegative process with enough integrability.
However, the uniform bound in the Assumption~\ref{assu:H1} is designed to prove the
global result of Section~\ref{sec:global}, where it provides the
solution-independent uniform bound on Malliavin derivatives
$
 |D_u\xi|+\int_u^T\|D_uF_r(\cdot)\|_{\MC_b}\,dr
$
which is vital for the extension of local solutions to global ones.
\end{rem}

\subsection{The case of drivers Lipschitz in \(\E[Z] Z\)}

We now formulate the local well-posedness result directly for
law-dependent coefficients.  This formulation contains
\eqref{BSDE1} as a special case. Consider
\be\label{bsde-law}
y_t= \xi + \int_t^T F_r(y_r,z_r,\ML(y_r,z_r), G_r(\ML({z_r}))z_r ) dr - \int_t^T z_r dB_r,
\ee
where $F: [0,T] \times \Omega_T \times \R \times \R^d \times \op_1(\R^{d+1}) \times \R \rightarrow \R$ is progressively measurable and $G:[0,T] \times \op_1(\R^{d}) \rightarrow \R^d$ is deterministic and Borel measurable.
\begin{assumption}\label{assu:H2prime}
There exists $q>2$ such that the following hold.
(i) $F$ is uniformly Lipschitz in the sense that for any $t \in [0,T],$ $(y,z, \MZ),(y',z',\MZ')\in  \R^{d+2} $ and $\mu,\mu' \in \op_1(\R^{d+1}),$
$$
|F_t(y,z,\mu, \MZ)- F_t(y', z', \mu',\MZ')| \le K  ( |y-y'|+ |z-z'|+ W_1(\mu,\mu') +|\MZ-\MZ'|   ), \ \text{for }K>0;
$$
$G_t$ is uniformly Lipschitz on $\op_1(\R^d)$ and
$G_t(\delta_0)\in L^2([0,T],\R^d)$, i.e. for any
$\rho,\rho'\in\op_1(\R^d)$,
$$
|G_t(\rho)-G_t(\rho')|\le K W_1(\rho, \rho'),\ \ \
G_2:=\int_0^T|G_t(\delta_0)|^2dt<\infty.
$$
(ii) $\xi\in\ML^q\cap\D^{1,2}$ and, for every fixed
$(y,z,\mu,\MZ)\in\R\times\R^d\times\op_1(\R^{d+1})\times\R$,
$F_\cdot(y,z,\mu,\MZ)\in\bL^{1,2}$.  The random field
$D_uF_r(\cdot)$ admits a jointly measurable version such that
\[
 D_uF_r(\cdot)\in
 \MC_b\big(\R^{d+2}\times\op_1(\R^{d+1}),\R^d\big)
\]
for $du\otimes dr\otimes dP$-a.e. $(u,r,\omega)$, and, for some
$K_0>0$,
\begin{equation}\label{law-assu:K0}
\E [\int_0^T |D_{u} \xi |^q du + \int_0^T \int_0^T\|D_u F_r(\cdot)\|_{\MC_b}^q du dr + \int_0^T |F_r(0)|^q dr ]<K_0,
\end{equation}
where $F_r(0)=F_r(0,0,\delta_0,0)$.  

\end{assumption}

Then we have the local well-posedness of \eqref{bsde-law}.

\begin{thm}\label{local-wp-law}
Suppose that Assumption~\ref{assu:H2prime} holds for $(\xi,F,G)$.
Then there exists $\delta\in(0,T]$, depending only on
$(q,K,K_0,G_2)$, such that, for every $s\in[T-\delta,T)$,
BSDE~\eqref{bsde-law} admits a unique solution
$(Y,Z)\in\MS^q([s,T])\times\MM^q([s,T],\R^d)$.  Moreover, there is a constant
$C=C(q,K,K_0,G_2)>0$ such that for $R:=(eK_0\exp(4qK^2G_2))^{\frac1q}$,
\be\label{law-local-apriori}
 \|Y\|_{\MS^q,[s,T]}
 \le C\left[\|\xi\|_{\ML^q}^q
 +(T-s)^{q-1}(K_0+R^q)\right]^{\frac1q},\ \ \
 \|Z\|_{\MM^q,[s,T]}
 \le R.
\ee

\end{thm}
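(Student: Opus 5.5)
The plan is a Banach fixed point on the measure flow. We freeze the law dependence and then apply Lemma~\ref{a-bsde} and Proposition~\ref{m-d}. Fix $s\in[T-\delta,T)$ and consider the set
\[
\mathcal{B}_R:=\Big\{(U,V)\in\MS^q([s,T])\times\MM^q([s,T],\R^d):\ \|V\|_{\MM^q,[s,T]}\le R\Big\}.
\]
For $(U,V)\in\mathcal{B}_R$ we set $\mu_r:=\ML(U_r,V_r)$ and $a_r:=G_r(\ML(V_r))$. Then $a$ is deterministic, and the bound $|a_r|\le|G_r(\delta_0)|+K\E|V_r|$ together with H\"older's inequality shows $a\in L^2([s,T],\R^d)$. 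Indeed,
\[
\int_s^T|a_r|^2dr\le 2G_2+2K^2(T-s)^{1-2/q}R^2 .
\]
The frozen driver $g_r(y,z,\MZ):=F_r(y,z,\mu_r,\MZ)$ satisfies Assumption~\ref{assu:H1}, with the same constant $K$. We have $\|D_ug_r(\cdot)\|_{\MC_b}\le\|D_uF_r(\cdot)\|_{\MC_b}$, and $|g_r(0)|\le|F_r(0)|+KW_1(\mu_r,\delta_0)$. Lemma~\ref{a-bsde} therefore gives a unique solution $(Y,Z)=:\Phi(U,V)$ of \eqref{BSDE2} with this $a$. Solutions of \eqref{bsde-law} are exactly the fixed points of $\Phi$.

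\textbf{Invariance of the ball.} Proposition~\ref{m-d}, via \eqref{m-d-zq}, gives
\[
\|Z\|_{\MM^q,[s,T]}^q\le\Big[\E\int_s^T|D_t\xi|^qdt+\E\int_s^T\!\!\int_t^T\|D_tF_r\|_{\MC_b}^q\,dr\,dt\Big]\exp\Big\{2q\int_s^T(1+K^2+K^2|a_r|^2)dr\Big\}.
\]
The bracket is at most $K_0$. By the $L^2$ bound on $a$ above, the exponent is at most
\[
2q(1+K^2)(T-s)+4qK^2G_2+4qK^2(T-s)^{1-2/q}R^2 .
\]
We choose $\delta$ so small, depending only on $(q,K,K_0,G_2)$ through $R$, that the terms vanishing with $T-s$ sum to at most $1$. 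Then $\|Z\|_{\MM^q}^q\le eK_0\exp(4qK^2G_2)=R^q$. This is exactly where $q>2$ is used: it makes the power $1-2/q$ positive. The bound on $Y$ comes from \eqref{a-bd-y1}. There, $\E[\int_s^T|g_r(\Theta_r)|dr]^q$ is controlled by H\"older's inequality in time, which gives $(T-s)^{q-1}\big(\int\E|F_r(0)|^q+\E|Y_r|^q+\E|Z_r|^q+|a_r|^q\E|Z_r|^q\big)$ and hence the term $(T-s)^{q-1}(K_0+R^q)$. We handle the terms involving $|a_r|^q$ and $\|Y\|_{\MS^q}$ by absorption for small $\delta$, or by putting them into the constant $C$. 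This yields \eqref{law-local-apriori} and shows that $\Phi$ maps $\mathcal{B}_R$ into itself. Note that $\mathcal{B}_R$ is closed in $\MS^q\times\MM^q$.

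\textbf{Contraction.} This is the main obstacle. Take two inputs $(U^i,V^i)$ and let $(Y^i,Z^i)=\Phi(U^i,V^i)$. Lemma~\ref{stable1} gives
\[
\|\delta Y\|_{\MS^q}+\|\delta Z\|_{\MH^q}\lesssim\|g^1(\Theta^2)-g^2(\Theta^2)\|_{\M^q}+\|a^1-a^2\|_{L^2}.
\]
On the right-hand side,
\[
\|g^1(\Theta^2)-g^2(\Theta^2)\|_{\M^q}\le K\int_s^TW_1(\mu^1_r,\mu^2_r)dr\le K(T-s)^{1-1/q}\big(\|\delta U\|_{\MS^q}+\|\delta V\|_{\MM^q}\big),
\]
and
\[
\|a^1-a^2\|_{L^2}\le K\Big(\int_s^T(\E|\delta V_r|)^2dr\Big)^{1/2}\le K(T-s)^{1/2-1/q}\|\delta V\|_{\MM^q}.
\]
Both powers of $T-s$ are positive. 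The constant in Lemma~\ref{stable1} depends only on $\|a^2\|_{L^2}$ and $\|Z^1\|_{\MH^q}$, both of which are uniformly bounded on the ball. The difficulty is that Lemma~\ref{stable1} controls $\delta Z$ in $\MH^q$ rather than in $\MM^q$, so the contraction does not close directly in the $\MM^q$ component. To fix this, I would run the contraction in the weaker metric $\|\delta U\|_{\MS^q}+\|\delta V\|_{\MH^q}$, or in $\MS^2\times\MH^2$. The $W_1$ terms only need $\E|\delta V_r|$, so the factor $(\int_s^T(\E|\delta V_r|)^2dr)^{1/2}\le\|\delta V\|_{\MH^2}$ (no small power of $T-s$) must be made small through the $a$-dependence in Lemma~\ref{stable1}. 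For that I would redo the linear estimate of its proof on $[s,T]$, where the coefficient $\gamma_r\,\delta a_r\,z^1_r$ gets the small factor $\|z^1\|_{\MH^q([s,T])}$. This factor is small because $\|z^1\|_{\MH^q([s,T])}\lesssim(T-s)^{1/2-1/q}\|z^1\|_{\MM^q}\le(T-s)^{1/2-1/q}R$. We then need $\mathcal{B}_R$ to be complete under this weaker metric. It is, since the $\MM^q$-ball is closed under $\MH^2$ convergence by Fatou's lemma. With this, the Banach fixed point theorem gives existence and uniqueness in $\mathcal{B}_R$.

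\textbf{Uniqueness in the full space.} For uniqueness among all solutions in $\MS^q\times\MM^q([s,T])$, observe that any solution is a fixed point of $\Phi$ with its own flow. Proposition~\ref{m-d} together with the argument above then shows that its $Z$ lies in $\mathcal{B}_R$ once $\delta$ is small. The remaining point is that the $L^2$ norm of its $a$ must also be controlled. I would handle this by a continuity argument in $s$, shrinking the interval, since $t\mapsto\|Z\|_{\MM^q,[t,T]}$ is continuous. Uniqueness then follows from the uniqueness of the fixed point in $\mathcal{B}_R$.
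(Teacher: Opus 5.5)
Your architecture is the paper's: freeze $\mu_r=\ML(U_r,V_r)$ and $a_r=G_r(\ML(V_r))$, solve with Lemma~\ref{a-bsde}, and get invariance of the $\MM^q$-ball from \eqref{m-d-zq}. You then contract in the $\MS^q\times\MH^q$ metric, with the ball closed there by Fatou, making the $\delta a\,z$ term small via $\|z\|_{\MH^q([s,T])}\le (T-s)^{1/2-1/q}R$. Full-space uniqueness comes from a continuity argument on $t\mapsto\int_t^T\E|Z_r|^q\,dr$.

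The genuine gap is in the invariance step. You assert that the frozen driver $g_r(y,z,\MZ)=F_r(y,z,\mu_r,\MZ)$ satisfies Assumption~\ref{assu:H1}. That assumption requires $g_t$ to be continuously differentiable in $(y,z,\MZ)$, while Assumption~\ref{assu:H2prime} only makes $F$ Lipschitz. So Proposition~\ref{m-d} does not apply as stated: its proof uses the chain rule \eqref{random-field-chain-rule} and the linearized equation with $\nabla g$. As a result, the $\MM^q$ bound on $Z$ is unjustified, and invariance, the smallness of $\|z\|_{\MH^q}$ in the contraction, and your uniqueness argument all rest on it. The missing step is a regularization, as in the paper:
\begin{enumerate}
\item Mollify $g$ in the Euclidean variables only, $g^\varepsilon=g*\phi^\varepsilon$. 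Then $\|\nabla g^\varepsilon\|_\infty\le K$, $|g^\varepsilon-g|\le CK\varepsilon$, and $\|D_ug^\varepsilon_r(\cdot)\|_{\MC_b}\le\|D_uF_r(\cdot)\|_{\MC_b}$.
\item Apply Proposition~\ref{m-d} to $(\xi,g^\varepsilon,a)$ with the same $K_0$.
\item Use Lemma~\ref{stable1} to get $z^\varepsilon\to z$ in $\MH^q$.
\item Pass to the limit in \eqref{m-d-zq} by Fatou along an a.e.\ convergent subsequence.
\end{enumerate}
You need the same device again when you invoke Proposition~\ref{m-d} for an arbitrary solution in the uniqueness step.

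There are two smaller points. First, your bound on $Y$ via H\"older in time produces $\int_s^T|a_r|^q\,\E|Z_r|^q\,dr$. This need not be finite, since $a$ is only known to lie in $L^2$, not $L^\infty$; for example, $G_\cdot(\delta_0)$ need not be in $L^q$. Instead, apply Lemma~\ref{a-bsde} with $\mu$ and $a$ frozen. Then the $a_rZ_r$ term is handled by Cauchy--Schwarz inside that lemma's constant, which depends on $\|a\|_{L^2}^2\le 2G_2+2K^2R^2$. Put only $F_r(0,0,\mu_r,0)$ into $g(0)$, and absorb the resulting $K(T-s)\|Y\|_{\MS^q}$ term.

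Second, the continuity argument for uniqueness needs a strict margin. If the $(T-s)$-dependent part of the exponent is only bounded by $1$, then at a time $\tau$ with $\int_\tau^T\E|Z_r|^q\,dr=R^q$ you merely recover $\le R^q$, which is no contradiction. Choose $\delta$ so that this part is at most $1/2$; this gives the bound $e^{1/2}K_0\exp(4qK^2G_2)<R^q$.
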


\begin{proof}
\textbf{Step 1: Construction of the fixed point mapping.}
For $s\in[0,T)$, set $I_s=[s,T]$, $\ell=T-s$ with $\ell$ to be determined, and
$
 \alpha:=\frac12-\frac1q>0.
$
Consider
\begin{equation*}
 \MU_s:=\left\{(u,v)\in\MS^q(I_s)\times\MH^q(I_s,\R^d):
 v\in\MM^q(I_s,\R^d),\quad
 \|v\|_{\MM^q,I_s}\le R\right\},
\end{equation*}
equipped with the $\MS^q\times\MH^q$ metric. It is standard to check the above space is a complete metric space.
For $(u,v)\in\MU_s$, let
\[
 \mu_r:=\ML(u_r,v_r),\qquad \rho_r:=\ML(v_r),\qquad
 a_r:=G_r(\rho_r).
\]
Then by the Lipschitz
property of $G$, we have 
\be\label{law-frozen-coefficient}
 \|a\|_{L^2(I_s)}^2
 \le 2G_2+2K^2\ell^{2\alpha}
       \|v\|_{\MM^q,I_s}^2
 \le 2G_2+2K^2R^2.
\ee
Moreover,
\[
 |F_r(0,0,\mu_r,0)|
 \le |F_r(0)|+K\big(\E|u_r|+\E|v_r|\big),
\]
so $F_\cdot(0,0,\mu_\cdot,0)\in\M^q(I_s)$.  Then by Lemma~\ref{a-bsde}, there exists a unique $(y,z)\in\MS^q(I_s)\times\MH^q(I_s,\R^d)$
solving
\be\label{law-frozen-fixed-bsde}
 y_t=\xi+\int_t^T F_r(y_r,z_r,\mu_r,a_rz_r)dr
       -\int_t^Tz_rdB_r.
\ee
Denote this map by $\Psi(u,v)=(y,z)$.

\textbf{Step 2: Invariance via Malliavin calculus.}
We now prove that $\Psi$ maps $\MU_s$ into itself. Indeed, we only need to show $\|z\|_{\MM^q,I_s}\le R$. Fix $(u,v)\in\MU_s$ and let
\[
 g_r(x):=F_r(x_0,x_1,\mu_r,x_2),
 \qquad x=(x_0,x_1,x_2)\in\R\times\R^d\times\R.
\]
It is standard to check that $g_\cdot(x)\in\bL^{1,2} $, and moreover,
\be\label{eq:unibd-dg}
 D_ug_r(x)=D_uF_r(x_0,x_1,\mu_r,x_2),\qquad
 \|D_ug_r(\cdot)\|_{\MC_b(\R^{d+2})}
 \le\|D_uF_r(\cdot)\|_{\MC_b}.
\ee
Now convolve $g$ only in $x=(y,z,\MZ)$ with a standard mollifier $\phi^\varepsilon$ and
write $g^\varepsilon=g*\phi^\varepsilon$. Then
\be\label{ine:uni-lip-g}
 |g_r^\varepsilon(x)-g_r(x)|\le CK\varepsilon,
 \qquad \|\nabla g_r^\varepsilon\|_\infty\le K,
 \qquad
 D_ug_r^\varepsilon=(D_ug_r)*\phi^\varepsilon.
\ee
Let
$(y^\varepsilon,z^\varepsilon)$ solve \eqref{law-frozen-fixed-bsde}
with $g$ replaced by $g^\varepsilon$. Note that $(\xi, g^{\vep}, a)$ satisfies
Assumption~\ref{assu:H1} with the same bound as $(\xi, g, a)$, and thus by
Proposition~\ref{m-d}, we have 
\be\label{law-frozen-mq-estimate}
\begin{split}
 \|z^\varepsilon\|_{\MM^q,I_s}^q
 \le{}&K_0\exp\left\{2q\int_s^T
       (1+K^2+K^2|a_r|^2)dr\right\}\\
 \le{}&K_0\exp\left\{2q\left[(1+K^2)\ell
       +2K^2G_2+2K^4R^2\ell^{2\alpha}\right]\right\}.
\end{split}
\ee
Choose $\delta_1\in(0,1]$, depending only on
$(q,K,K_0,G_2)$, so that
\be\label{law-invariance-choice}
 2q\left[(1+K^2)\delta_1
       +2K^4R^2\delta_1^{2\alpha}\right]\le\frac12.
\ee
By Lemma~\ref{stable1} with $(g^1,y^1,z^1)=(g,y,z)$ and
$(g^2,y^2,z^2)=(g^\varepsilon,y^\varepsilon,z^\varepsilon)$, and \eqref{ine:uni-lip-g}, we have
\[
 \|y^\varepsilon-y\|_{\MS^q,I_s}
 +\|z^\varepsilon-z\|_{\MH^q,I_s}
 \le C\|g^\varepsilon(\Theta^\varepsilon)
              -g(\Theta^\varepsilon)\|_{\M^q,I_s}
 \le C\varepsilon\ell,
\]
where $\Theta_r^\varepsilon=(y_r^\varepsilon,z_r^\varepsilon,
a_rz_r^\varepsilon)$.
Thus $(y^\varepsilon,z^\varepsilon)\to(y,z)$ in
$\MS^q(I_s)\times\MH^q(I_s)$ as $\varepsilon\downarrow0$.
Passing to an a.e. convergent subsequence and using Fatou's lemma in
\eqref{law-frozen-mq-estimate}, we obtain, whenever $\ell\le\delta_1$,
\[
 \|z\|_{\MM^q,I_s}^q
 \le e^{1/2}K_0\exp(4qK^2G_2)<R^q.
\]
Thus $\Psi(\MU_s)\subset\MU_s$.

\textbf{Step 3: Contraction and existence of solutions.}
We next prove contraction. For $(u^i,v^i)\in\MU_s$, let
$(y^i,z^i)=\Psi(u^i,v^i)$ and $(\Delta u, \Delta v):=(
u^1-u^2, v^1-v^2 )$. Similar notations like $\Delta y, \Delta z$ are self-explained. Let $\mu^i_r=\ML(u^i_r,v^i_r)$ and
$a^i_r=G_r(\ML(v^i_r))$. It follows that
\be\label{law-coupling-bounds}
\begin{split}
 W_1(\mu^1_r,\mu^2_r)
 \le\E|\Delta u_r|+\E|\Delta v_r|,\ \ 
 |a^1_r-a^2_r| \le K\E|\Delta v_r|.
\end{split}
\ee
By taking the differnce of two equations, we have 
that for some progressively measurable processes $\alpha, \beta, \gamma$, all bounded by $K$, 
\be\label{eq:dif-yz}
 \Delta y_t=\int_t^T\bigl(
   \alpha_r\Delta y_r+\beta_r\Delta z_r
   +\gamma_r a^1_r\Delta z_r+b_r\bigr)dr
   -\int_t^T\Delta z_r dB_r,
\ee
where 
$
 b_r:=F_r(y^2_r,z^2_r,\mu^1_r,a^1_rz^2_r)
      -F_r(y^2_r,z^2_r,\mu^2_r,a^2_rz^2_r).
$
Then by Lemma~\ref{a-bsde} we have
\be\label{law-contraction-pre}
 \|\Delta y\|_{\MS^q,I_s}+\|\Delta z\|_{\MH^q,I_s}
 \le C_{q,K,K_0,G_2}\|b\|_{\M^q,I_s} \le C_{q,K,K_0,G_2}\left\|
 W_1(\mu^1,\mu^2)+|a^1-a^2|\,|z^2|
 \right\|_{\M^q,I_s}.
\ee
To estimate the right hand side of the above inequality, by
\eqref{law-coupling-bounds},
\[
 \left\|W_1(\mu^1,\mu^2)\right\|_{\M^q,I_s}
 \le \ell\|\Delta u\|_{\MS^q,I_s}
      +\ell^{1/2}\|\Delta v\|_{\MH^q,I_s}.
\]
Furthermore, by the Cauchy--Schwarz inequality and
$\|z^2\|_{\MH^q,I_s}\le
\ell^\alpha\|z^2\|_{\MM^q,I_s}$, we have
\[
\begin{split}
 \left\||a^1-a^2| \ |z^2|\right\|_{\M^q,I_s}
 \le K\Big(\int_s^T(\E|\Delta v_r|)^2dr\Big)^{\frac12}
          \|z^2\|_{\MH^q,I_s} \le KR\ell^{\alpha}\|\Delta v\|_{\MH^q,I_s}.
\end{split}
\]
Combining the above bounds with \eqref{law-contraction-pre} we obtain
\begin{equation*}
\begin{split}
 \|\Delta y\|_{\MS^q,I_s}+\|\Delta z\|_{\MH^q,I_s}
 \le C_{q,K,K_0,G_2}\big[ \ell\|\Delta u\|_{\MS^q,I_s} +(\ell^{1/2}+KR\ell^\alpha)
       \|\Delta v\|_{\MH^q,I_s}\big].
\end{split}
\end{equation*}
Choose $\delta_2\in(0,\delta_1]$ such that
\[
 C_{q,K,K_0,G_2}\big(\delta_2+\delta_2^{1/2}
                   +KR\delta_2^\alpha\big)\le\frac12.
\]
Then $\Psi$ is a contraction whenever $\ell\le\delta_2$, which implies there exists a fixed point
$(Y,Z)$ in $\MU_s$ solving 
\eqref{bsde-law}.

\textbf{Step 4: Uniqueness and estimate.}
Now we prove uniqueness in the whole space $\MS^q \times \MM^q$, rather than
only inside $\MU_s$. This follows by the fact that any solution in $\MS^q(I_s)\times\MM^q(I_s,\R^d)$ is a fixed point of $\Psi$. Indeed, let $(\widetilde Y,\widetilde Z)\in\MS^q(I_s)\times\MM^q(I_s,\R^d)$ be
any solution and set
\[
 X(t):=\int_t^T\E|\widetilde Z_r|^qdr,
 \qquad \widetilde a_r:=G_r(\ML(\widetilde Z_r)).
\]
We only need to show that $X(s)\le R^q$. Following the same argument as above, now with the two law processes of this
solution frozen, we have for any $t\in [s,T],$
\be\label{law-solutionwise-mq-estimate}
 X(t)\le K_0\exp\left\{2q\left[(1+K^2)(T-t)
       +K^2\int_t^T|\widetilde a_r|^2dr\right]\right\}.
\ee
Letting $h=T-t$, by H\"older's inequality we have 
\be\label{ine:tilde-a}
 \int_t^T|\widetilde a_r|^2dr
 \le 2G_2+2K^2h^{2\alpha}X(t)^{2/q}.
\ee
If $X(s)>R^q$, then there exists $\tau \in (s,T)$ such that $X(\tau)=R^q$. Then by 
\eqref{law-solutionwise-mq-estimate}, \eqref{ine:tilde-a},
\eqref{law-invariance-choice}, we have
\[
 R^q=X(\tau)
 \le e^{1/2}K_0\exp(4qK^2G_2)<R^q,
\]
a contradiction.  Hence our claim holds and the uniqueness follows.

It remains to show the estimate for $Y$. Fixing $t\in[s,T]$, and
$h=T-t$, set
$$
 \mu_r:=\ML(Y_r,Z_r),\ 
 a_r:=G_r(\ML(Z_r)),\  r\in[t,T].
$$
Note that 
\[
 |F_r(0,0,\mu_r,0)|
 \le |F_r(0)|+K\bigl(\E|Y_r|+\E|Z_r|\bigr).
\]
It follows by H\"older's inequality that
\[
\begin{split}
 \|F_\cdot(0,0,\mu_\cdot,0)\|_{\M^q,[t,T]}
 \le h^{1-1/q}K_0^{1/q}
 +Kh\|Y\|_{\MS^q,[t,T]} + Kh^{1-1/q}\|Z\|_{\MM^q,[t,T]}.
\end{split}
\]
Then by applying Lemma~\ref{a-bsde} with $\mu$ and $a$ frozen, 
\[
 \|Y\|_{\MS^q,[t,T]}
 \le C_{q,K,K_0,G_2}\left(
 \|\xi\|_{\ML^q}+h^{1-1/q}K_0^{1/q}
 +Kh\|Y\|_{\MS^q,[t,T]}+Kh^{1-1/q}R\right).
\]
Decreasing $\delta$ once more so that
$C_{q,K,K_0,G_2}K\delta\le\frac12$, we obtain the estimate of $Y$.




\end{proof}

\begin{rem}\label{rem:expectation-specialization}
The expectation-dependent equation \eqref{BSDE1} is a particular case of 
\eqref{bsde-law}. Indeed, for
$\mu\in\op_1(\R\times\R^d)$ and $\rho\in\op_1(\R^d)$, set
\[
\begin{split}
 F_t(y,z,\mu,\MZ)
 :=
 f_t\big(y,z,
          \int_{\R\times\R^d}y'\,\mu(dy',dz'),
          \int_{\R\times\R^d}z'\,\mu(dy',dz'),
          \MZ\big),\ \ 
 G_t(\rho):=\int_{\R^d}z'\,\rho(dz').
\end{split}
\]
Consequently, whenever
$(\xi,F,G)$ satisfies Assumption~\ref{assu:H2prime},
Theorem~\ref{local-wp-law} applies directly to the
expectation-dependent equation.

\end{rem}

\subsection{The case of drivers Lipschitz in $(|\E[Z]|^2, \E[Z]Z)$}

To include the case with driver containing the term $|\E[Z] |^2,$ we further extend the result of Theorem \ref{local-wp-law} by exploiting the additivity structure of equations like \eqref{utility-mf-qbsde}. Now consider
\be\label{bsde-law-lb}
y_t= \xi + \int_t^T H_r(y_r,z_r,\ML(y_r), \ML(z_r), G_r(\ML({z_r}))z_r ) dr - \int_t^T z_r dB_r,
\ee
where $H:[0,T]\times\Omega_T\times\R\times\R^d\times
\op_1(\R)\times\op_1(\R^d)\times\R\to\R$ is progressively
measurable and $G:[0,T]\times\op_1(\R^d)\to\R^d$ is deterministic.

\begin{assumption}\label{assu:H3}
For every $(t,y,z,\nu,\rho,\MZ)\in [0,T]\times\R\times \R^d \times \op_1(\R ) \times \op_1(\R^d) \times \R$, suppose
\be \label{decom}
H_t(y,z, \nu, \rho, \MZ)= F_t(y,z,\nu, \rho,\MZ)+ h_t(\nu,\rho).
\ee
For $\mu\in\op_1(\R^{d+1})$, let $\mu^Y$ and $\mu^Z$ denote its two
marginals and set
\[
 \widehat F_t(y,z,\mu,\MZ)
 :=F_t(y,z,\mu^Y,\mu^Z,\MZ).
\]
The data $(\xi,\widehat F,G)$ satisfies
Assumption~\ref{assu:H2prime} with parameters
$q>2$, $K>0$, $K_0>0$, and $G_2<\infty$ as specified there. 
Moreover, $h:[0,T] \times \op_1(\R ) \times \op_1(\R^d)
\rightarrow \R$ is deterministic, Borel measurable, 
and satisfies the following quadratic growth condition: for any $\nu_1, \nu_2 \in \op_1(\R),$
$\rho_1, \rho_2 \in \op_1(\R^d),$
\begin{eqnarray*}
&&|h_t(\nu_1, \rho_1)- h_t(\nu_2, \rho_2)| \le K \left[ W_1(\nu_1 , \nu_2)+(1+\|\rho_1\|_{1} + \|\rho_2\|_{1}) W_1(\rho_1,\rho_2)  \right],\\
&& |h_t(\delta_0,\rho)|\le k_t+ K \| \rho \|_1^2,	\ \ \ \text{where }k \in L^1([0,T]).
\end{eqnarray*}

\end{assumption}

\begin{rem}
The quadratic bounds in Assumption~\ref{assu:H3} can be replaced by
subquadratic ones.  More precisely, for some $q'\in[1,2]$ one may assume
\begin{eqnarray*}
&&|h_t(\nu_1, \rho_1)- h_t(\nu_2, \rho_2)| \le K \left[ W_1(\nu_1 , \nu_2)+(1+\|\rho_1\|_{1}^{q'-1} + \|\rho_2\|_{1}^{q'-1}) W_1(\rho_1,\rho_2)  \right],\\
&& |h_t(\delta_0,\rho)|\le k_t+ K \| \rho \|_1^{q'},	\ \ \ \text{where }k \in L^1([0,T]).
\end{eqnarray*}	
In this case the local well-posedness conclusion of
Theorem~\ref{mainlocal} remains valid.  
	
\end{rem}

In this case, instead of building a fixed point argument for \eqref{bsde-law-lb}, we study the following system,
\begin{eqnarray}\label{y1}
	&& \yone_t=\xi + \int_t^T F_r(\yone_r + \ytwo_r, z^1_r, \ML(\yone_r + \ytwo_r), \ML(\zone_r), G(\ML({\zone_r})) \zone_r) dr - \int_t^T \zone_r dB_r, \\ \label{y2}
	&& \ytwo_t= \int_t^T h_r (\ML(\yone_r + \ytwo_r), \ML(\zone_r)) dr.
\end{eqnarray}
Note that although the system \eqref{y1} and \eqref{y2} is coupled, the second equation is a deterministic ODE, which  has trivial Malliavin derivative, while the first equation falls under the scope of Theorem~\ref{local-wp-law} once we fix $y^{(2)}$. This is the key observation for us to solve \eqref{bsde-law-lb}. We have the equivalence on well-posedness between the above system and equation \eqref{bsde-law-lb}.

\begin{prop}\label{equiv}
Suppose that Assumption~\ref{assu:H3} holds. For any $t\in [0,T),$ there exists a unique solution $(y,z) \in \MS^q  \times \MM^q $ on $[t,T]$ to \eqref{bsde-law-lb} if and only if there exists a unique solution $(y^1,z^1,y^2)   \in \MS^q  \times \MM^q \times \MC$ to system \eqref{y1} \eqref{y2}  on $[t,T]$. In this case, we have
\begin{equation*}
 y=y^1+y^2,\ \  z=z^1.
\end{equation*}
	
\end{prop}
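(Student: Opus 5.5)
The plan is to establish an explicit bijection between solutions of \eqref{bsde-law-lb} and solutions of the system \eqref{y1}--\eqref{y2}. Once the two maps are shown to be mutually inverse, existence and uniqueness transfer in both directions.

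\emph{From the system to the equation.} Let $(y^1,z^1,y^2)\in\MS^q\times\MM^q\times\MC$ solve \eqref{y1}--\eqref{y2} on $[t,T]$, and set $y:=y^1+y^2$ and $z:=z^1$. Since $y^2$ is deterministic and continuous, $y$ is continuous and adapted, and $\|y\|_{\MS^q}\le\|y^1\|_{\MS^q}+\|y^2\|_{[t,T]}<\infty$. Adding \eqref{y1} and \eqref{y2}, and using $y^2_T=0$ together with the decomposition \eqref{decom}, gives
\[
y_s=\xi+\int_s^T\big[F_r(y_r,z_r,\ML(y_r),\ML(z_r),G_r(\ML(z_r))z_r)+h_r(\ML(y_r),\ML(z_r))\big]dr-\int_s^Tz_rdB_r .
\]
This is exactly \eqref{bsde-law-lb}. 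The map $\Phi:(y^1,z^1,y^2)\mapsto(y^1+y^2,z^1)$ is therefore well defined from solutions of the system to solutions of the equation.

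\emph{From the equation to the system.} Let $(y,z)\in\MS^q\times\MM^q$ solve \eqref{bsde-law-lb}. Define the deterministic function
\[
y^2_s:=\int_s^Th_r(\ML(y_r),\ML(z_r))dr .
\]
The first point is that $y^2$ is well defined and continuous. By Assumption~\ref{assu:H3},
\[
|h_r(\ML(y_r),\ML(z_r))|\le k_r+K\E|z_r|^2+K\E|y_r|+K(1+\E|z_r|)\E|z_r| .
\]
This bound is integrable on $[t,T]$ because $k\in L^1$, $z\in\MM^q\subset\MM^2$ (as $q>2$), and $y\in\MS^q$. Hence $y^2\in\MC$ and $y^2_T=0$. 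The bound should be written out carefully: it is the only place where the quadratic growth of $h$ interacts with the integrability class, and it is where $\MM^q$ (rather than merely $\MH^q$) for $z$ matters.

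Next set $y^1:=y-y^2$ and $z^1:=z$. Then $y^1\in\MS^q$, and subtracting the $h$-integral from \eqref{bsde-law-lb} shows that $(y^1,z^1)$ satisfies \eqref{y1} with $y^1+y^2=y$. Equation \eqref{y2} holds by definition, since $\ML(y^1_r+y^2_r)=\ML(y_r)$ and $\ML(z^1_r)=\ML(z_r)$. This gives the map $\Phi^{-1}:(y,z)\mapsto(y-y^2,z,y^2)$.

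\emph{Uniqueness transfer and conclusion.} It remains to check that $\Phi\circ\Phi^{-1}$ and $\Phi^{-1}\circ\Phi$ are identities. The first is immediate. For the second, start from a system solution $(y^1,z^1,y^2)$. Equation \eqref{y2} forces $y^2_s=\int_s^Th_r(\ML(y^1_r+y^2_r),\ML(z^1_r))dr$, which is exactly the function that $\Phi^{-1}$ recomputes from $(y^1+y^2,z^1)$. Hence the construction returns the original triple.

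So $\Phi$ is a bijection between the two solution sets. Existence on either side yields existence on the other, and two distinct solutions on one side would map to two distinct solutions on the other. This proves the equivalence of unique solvability, together with the identities $y=y^1+y^2$ and $z=z^1$. I expect no real obstacle beyond the integrability bound on $h$ noted above.
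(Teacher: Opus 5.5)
Your proposal is correct and follows the paper's proof. The paper uses the same two maps, $(y,z)\mapsto(y-y^{(2)},z,y^{(2)})$ with $y^{(2)}_s=\int_s^T h_r(\ML(y_r),\ML(z_r))\,dr$, and $(y^{(1)},z^{(1)},y^{(2)})\mapsto(y^{(1)}+y^{(2)},z^{(1)})$. Its uniqueness step is exactly your observation that \eqref{y2} pins down $y^{(2)}$, so your bijection formulation simply makes explicit the sufficiency direction the paper calls ``similar''.

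One small correction to a side remark: the integrability of $r\mapsto h_r(\ML(y_r),\ML(z_r))$ only needs $\int_t^T\E|z_r|^2\,dr<\infty$. That already holds for $z\in\MH^q\subset\MH^2$, so this bound is not where the $\MM^q$ class matters.
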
 

\begin{proof}
Without loss of generality, let $t=0.$
For the necessary part, suppose that $(y,z) \in \MS^q \times \MM^q$ is the unique solution to \eqref{bsde-law-lb}. In view of Assumption~\ref{assu:H3}, we have a.e.
$$
| h_r(\ML(y_r), \ML(z_r))| \le k_r + K \|\ML(z_r)\|_1^2 + K W_1(\ML(y_r), \delta_0) \le k_r +K (\E[|z_r|^2]+ \E[|y_r|]).
$$
Let
\be\label{def-y12}
\ytwo_t:= \int_t^T h_r (\ML(y_r), \ML(z_r)) dr \in \MC,\ \text{and} \  (\yone,\zone):= (y-\ytwo, z) \in \MS^q \times \MM^q.
\ee
It follows by \eqref{decom} and \eqref{def-y12} that $(\yone, \zone, \ytwo)$ is a solution to \eqref{y1} and \eqref{y2}. For the uniqueness of solutions to \eqref{y1} and \eqref{y2}, suppose that $(\ty^{(1)}, \tz^{(1)}, \ty^{(2)})$ is another set of solution. Then $(\ty,\tz):= (\ty^{(1)}+\ty^{(2)}, \tz^{(1)} )$ is a solution to \eqref{bsde-law-lb}. It follows by the uniqueness of solution to \eqref{bsde-law-lb} that $(\ty^{(1)}+\ty^{(2)}, \tz^{(1)})=(\yone+\ytwo, \zone)$. Then we have
$$
y^{(2)}_t= \ty^{(2)}_t= \int_t^T h_r(\ML(y_r), \ML(z_r)) dr,
$$
which also implies $y^{(1)}=\ty^{(1)}.$
The sufficient part follows similarly.

%

\end{proof}

Thanks to the above characterization of solutions of \eqref{bsde-law-lb} via \eqref{y1}, \eqref{y2}, we can solve \eqref{bsde-law-lb} by a fixed point argument for the system.

\begin{thm}\label{mainlocal}

	Suppose that Assumption~\ref{assu:H3} holds. Then there exists $\delta >0$ depending on $(q,K,K_0,G_2,\|\xi\|_{\ML^q},\|k\|_{L^1,[0,T]})$ such that there exists a unique $(Y,Z) \in \MS^q \times \MM^q$ on $[T-\delta,T]$ solving mean-field qBSDE \eqref{bsde-law-lb}.
	
\end{thm}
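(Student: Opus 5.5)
By Proposition~\ref{equiv}, it suffices to show that the system \eqref{y1}--\eqref{y2} has a unique solution $(y^{(1)},z^{(1)},y^{(2)})\in\MS^q\times\MM^q\times\MC$ on $[T-\delta,T]$. I will obtain it by a fixed point in the deterministic component $y^{(2)}$, with Theorem~\ref{local-wp-law} handling the stochastic part.

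\textbf{Step 1: solve \eqref{y1} for frozen $\phi$.} Fix a deterministic continuous $\phi$ on $I_s=[s,T]$ with $\|\phi\|_{[s,T]}\le M$, for $M$ chosen below. Define
\[
F^\phi_r(y,z,\mu,\MZ):=\widehat F_r\big(y+\phi_r,z,\mu\circ\tau_{\phi_r}^{-1},\MZ\big),
\]
where $\tau_c(y,z)=(y+c,z)$. Then \eqref{y1} with $y^{(2)}=\phi$ is exactly \eqref{bsde-law} with driver $F^\phi$. I need to check that $(\xi,F^\phi,G)$ satisfies Assumption~\ref{assu:H2prime}:
\begin{itemize}
\item Lipschitz constants: translation is an isometry for $W_1$, so the constants are still of order $K$.
\item Malliavin derivatives: a deterministic shift does not change them, so $\|D_uF^\phi_r\|_{\MC_b}\le\|D_u\widehat F_r\|_{\MC_b}$.
\item Value at zero: $|F^\phi_r(0)|\le|\widehat F_r(0)|+2K|\phi_r|$, so $K_0$ is replaced by some $K_0'$ depending on $M$.
\end{itemize}
Theorem~\ref{local-wp-law} then gives $\delta_0=\delta_0(q,K,K_0',G_2)$ and, for $T-s\le\delta_0$, a unique solution $(y^{\phi},z^{\phi})$. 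This solution satisfies $\|z^\phi\|_{\MM^q}\le R'$, and by \eqref{law-local-apriori} $\|y^\phi\|_{\MS^q}\le C(\|\xi\|_{\ML^q}+1)$. The key point is that the bound $R'$ on $Z$ does not depend on $\phi$ beyond $M$.

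\textbf{Step 2: the map $\Phi$ is invariant.} Set $\Phi(\phi)_t:=\int_t^T h_r\big(\ML(y^\phi_r+\phi_r),\ML(z^\phi_r)\big)\,dr$. By Assumption~\ref{assu:H3},
\[
|h_r|\le k_r+K\big(\E|y^\phi_r|+M\big)+K\big(\E|z^\phi_r|\big)^2.
\]
Since $\int_s^T(\E|z_r|)^2dr\le\ell^{1-2/q}\|z\|_{\MM^q}^2$ with $\ell=T-s$, this gives
\[
\|\Phi(\phi)\|\le\|k\|_{L^1}+K\ell\big(C(\|\xi\|_{\ML^q}+1)+M\big)+K\ell^{1-2/q}R'^2 .
\]
Choose $M:=\|k\|_{L^1}+1$. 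Shrinking $\ell$ then makes the right-hand side at most $M$, so $\Phi$ maps the ball of radius $M$ into itself. This is why $\delta$ depends on $\|\xi\|_{\ML^q}$ and $\|k\|_{L^1}$.

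\textbf{Step 3: $\Phi$ is a contraction (the main obstacle).} I need stability of Step~1 with respect to $\phi$. For $\phi^1,\phi^2$, the difference equation is linear, as in \eqref{eq:dif-yz}, with inhomogeneity bounded by $2K|\Delta\phi_r|$. The $a^i$ and measure terms are handled exactly as in Step~3 of Theorem~\ref{local-wp-law}, using $\|z^i\|_{\MM^q}\le R'$. For small $\ell$ this gives
\[
\|\Delta y\|_{\MS^q}+\|\Delta z\|_{\MH^q}\le C\ell\,\|\Delta\phi\|.
\]
The $\Delta z$ term is the one to watch. The $h$-Lipschitz bound involves $(1+\|\rho_1\|_1+\|\rho_2\|_1)\,\E|\Delta z_r|$. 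Integrating in time and applying Cauchy--Schwarz gives
\[
\Big(\int_s^T(1+\E|z^1_r|+\E|z^2_r|)^2dr\Big)^{1/2}\|\Delta z\|_{\MH^q}\le C(\ell^{1/2}+\ell^{\alpha}R')\|\Delta z\|_{\MH^q}.
\]
Here the a priori $\MM^q$ bound from Theorem~\ref{local-wp-law} is what controls the quadratic growth. The $W_1(\nu_1,\nu_2)$ term contributes at most $\ell(\|\Delta y\|_{\MS^q}+\|\Delta\phi\|)$. Altogether
\[
\|\Phi(\phi^1)-\Phi(\phi^2)\|\le C(\ell+\ell^{1/2}+\ell^\alpha)\|\Delta\phi\|,
\]
which is a contraction for small $\ell$.

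\textbf{Step 4: conclusion.} Banach's fixed point theorem gives existence in $\MC$, and Proposition~\ref{equiv} transfers this to a solution $(Y,Z)$ of \eqref{bsde-law-lb}. For uniqueness in all of $\MS^q\times\MM^q$, I take any solution and form $y^{(2)}$ as in \eqref{def-y12}. The a priori argument of Step~4 of Theorem~\ref{local-wp-law} bounds the $\MM^q$ norm of $z$ by $R'$, and the same growth estimate as in Step~2 shows $\|y^{(2)}\|\le M$. Hence this $y^{(2)}$ is a fixed point of $\Phi$ in the ball, and uniqueness of that fixed point gives uniqueness of the solution.
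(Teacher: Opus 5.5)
Your proof is correct, but your fixed point runs in the opposite direction from the paper's. The paper iterates on the stochastic pair $(u,v)$ in the set $\MU_s$ of \eqref{us'}. For each input it first solves the implicit Lipschitz ODE \eqref{y2'} for $y^{(2)}$, then solves \eqref{y1'} by Theorem~\ref{local-wp-law}, and maps $(u,v)\mapsto(y^{(1)},z^{(1)})$. Its contraction combines the ODE stability \eqref{D-y2'} with the stability of \eqref{y1'} in $y^{(2)}$ from \eqref{est-delta}--\eqref{86}. Uniqueness outside the ball is then proved separately, by a difference estimate with $R_0$ on a fine partition.

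You instead iterate on the deterministic shift $\phi=y^{(2)}$ in a sup-norm ball, using Theorem~\ref{local-wp-law} as a black-box solution map and defining $\Phi$ explicitly. The ingredients are the same: translation invariance of $W_1$, deterministic shifts leaving $D_uF$ unchanged, and the $\MM^q$ bound taming the quadratic growth of $h$. Your version is leaner. The fixed-point space is a closed ball in $\MC([s,T])$, so completeness is immediate, and no intermediate ODE has to be solved. The dependence of $\delta$ on $\|\xi\|_{\ML^q}$ and $\|k\|_{L^1}$ also enters transparently through the radius $M$. The price is that you need stability of the full mean-field solution map with respect to $\phi$. In your Step~3 the measure and $a$ differences are not input terms as in Step~3 of Theorem~\ref{local-wp-law}. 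They involve $(\Delta y,\Delta z)$ themselves and must be absorbed into the left-hand side, which works for small $\ell$ exactly as in \eqref{est-delta}--\eqref{86}.

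The uniqueness step needs one more sentence of care. ``The same growth estimate as in Step~2'' uses $\|y\|_{\MS^q}\le C(\|\xi\|_{\ML^q}+1)$, which came from \eqref{law-local-apriori} with the constant $K_0'$. For an arbitrary solution that constant is only available once you know $\|y^{(2)}\|\le M$, which is exactly what you are trying to prove.

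The repair is easy.
\begin{enumerate}
\item The bound \eqref{law-solutionwise-mq-estimate} involves only the Malliavin data of $\xi$ and $F$; a deterministic shift does not enter. Hence $\|Z\|_{\MM^q}\le R'$ holds for every solution in $\MS^q\times\MM^q$ once $\ell$ satisfies \eqref{law-invariance-choice} with $R'$.
\item Then bound $\|Y\|_{\MS^q}$ directly from \eqref{bsde-law-lb}. The $|Y_r|$ and $\E|Y_r|$ terms carry a factor $O(\ell)$ after time integration and can be absorbed.
\item Only afterwards apply the growth bound on $h$ to get $\|y^{(2)}\|\le M$. The uniqueness part of Theorem~\ref{local-wp-law} and uniqueness of the fixed point of $\Phi$ then finish the argument.
\end{enumerate}
A joint continuity argument in $t$, or the paper's direct difference estimate, would also work.
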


\begin{proof}
	
In view of Proposition \ref{equiv}, we only need to show that there exists a unique solution $(\yone, \zone, \ytwo) \in \MS^q \times \MM^q \times \MC$ on some $[T-\delta, T]$ to the system \eqref{y1} and \eqref{y2}. We will prove this by a fixed point argument.

\textbf{Step 1: Construction of the fixed point mapping.}
For any $(u,v) \in \MS^q([s,T]) \times \MM^q([s,T], \R^d)$ with $s$ to be determined later, consider the decoupled system
\begin{eqnarray}\label{y1'}
	&& \yone_t=\xi + \int_t^T F_r(\yone_r + \ytwo_r, \zone_r, \ML(\yone_r + \ytwo_r), \ML(\zone_r), G(\ML({\zone_r})) \zone_r) dr - \int_t^T \zone_r dB_r, \\ \label{y2'}
	&& \ytwo_t= \int_t^T h_r (\ML(u_r + \ytwo_r), \ML(v_r)) dr, \ \ \ t\in [s,T].
\end{eqnarray}
Note that by Assumption~\ref{assu:H3}, $g_r(y):=h_r(\ML(u_r+ y), \ML(v_r))$ is $K$-Lipschitz in $y$, and
$$
|g_r(0)| \le K \E[|u_r|] + k_r + K \E[|v_r|^2].
$$
It follows by classical estimate for ODEs that 
\be\label{est-y2'}
\begin{split}
\| \ytwo \|_{[t,T]} & \le  e^{KT} \int_t^T |g_r(0)|dr\\
 & \le  e^{KT} (K (T-t) \| u \|_{\MS^q,[t,T]} + \|k\|_{L^1,[t,T]} + K \|v\|_{\MM^2,[t,T]}^2),
\end{split}
\ee
so \eqref{y2'} is well-posed.  For \eqref{y1'}, define, for
$\mu\in\op_1(\R^{d+1})$,
\[
 \tF_t(y,z,\mu,\MZ)
 :=F_t\bigl(y+\ytwo_t,z,\mu^Y+\ytwo_t,\mu^Z,\MZ\bigr),
\]
where $\mu^Y+\ytwo_t$ denotes the translation of the first marginal
by $\ytwo_t$.  Since $\ytwo$ is deterministic, the shifted data
$(\xi,\tF,G)$ satisfy Assumption~\ref{assu:H2prime}, with
$\|D_u\tF_r\|_{\MC_b}\le\|D_uF_r\|_{\MC_b}$ and
\begin{equation*}
\begin{split}
\E \int_t^T |\tF_r(0)|^q dr	& \le \E \int_t^T (|F_r(0)| + 2K|\ytwo_r|)^q dr\\
& \lesssim_{q,K} \E \int_t^T |F_r(0)|^q dr+(T-t) \left(\|u\|_{\MS^q,[t,T]}T+ \|k\|_{L^1,[t,T]} + \|v\|^2_{\MM^2,[t,T]} \right)^q,
\end{split}
\end{equation*}
where we apply \eqref{est-y2'} in the second inequality above.
It follows that 
\be \label{K0'}
\begin{split}
&\E [\int_s^T |D_{u} \xi |^q du + \int_s^T \int_s^T\|D_u \tF_r(\cdot)\|_{\MC_b}^q du dr + \int_s^T |\tF_r(0)|^q dr ]\\
&\ \ \ \ \ \  \le C_{q,K}\left(K_0 + (T-s)(\|u\|_{\MS^q,[s,T]}(T-s)+ \|k\|_{L^1,[s,T]} + \|v\|^2_{\MM^2,[s,T]})^q \right).
\end{split}
\ee 
Let
\[
 \tK_0:=2C_{q,K}(K_0+1); \qquad 
 \tilde R^q:=e\tK_0\exp(4qK^2G_2).
\]
Now consider 
\be\label{us'}
\begin{split}
\MU_s:= & \Big\{(u,v) \in \MS^q([s,T]) \times \MH^q([s,T], \R^d) :\\
&\quad \|v\|_{\MM^q,[s,T]}^q \le \tilde R^q,\quad
\|u\|_{\MS^q,[s,T]}^q
\le C \bigl(\|\xi\|_{\ML^q}^q+1+\tK_0\bigr)\Big\},
\end{split}
\ee
where $C=C_{q,K, \tilde{K}_0,G_2}>C_{q,K}$ from here on, is a constant depending only on $(q,K,\tK_0, G_2)$, and in particular can be choose as the implicit constant from \eqref{law-local-apriori} with $K_0$ there replaced by $\tK_0.$ Equipped with the $\MS^q\times\MH^q$ metric, by Fatou's lemma, $\MU_s$ is complete.
Choose $s$ close enough to $T$ such that 
$$
(T-s)\left(
T\left[C \bigl(\|\xi\|_{\ML^q}^q+1+\tK_0\bigr)\right]^{1/q}
+\|k\|_{L^1,[0,T]}+\tilde R^2\right)^q<1.
$$
Then \eqref{K0'} implies,
\be \label{K0''}
\E \left[\int_s^T |D_{u} \xi |^q du + \int_s^T \int_s^T\|D_u \tF_r(\cdot)\|_{\MC_b}^q du dr + \int_s^T |\tF_r(0)|^q dr \right]
\le C_{q,K}(K_0+1)<\tK_0.
\ee 
Then choose $s$ such that $T-s$ even smaller, depending only on
$(q,K,K_0,G_2,\|\xi\|_{\ML^q},\|k\|_{L^1,[0,T]})$, such that Theorem~\ref{local-wp-law} holds for equation \eqref{y1'} with $(K_0, R)$ there replaced by $(\widetilde K_0, \widetilde R)$. 
For every $(u,v)\in\MU_s$, by Theorem~\ref{local-wp-law}, there exists 
a unique solution
$(\yone,\zone)\in\MS^q([s,T])\times\MM^q([s,T],\R^d)$ of
\eqref{y1'}.
Consider mapping $\Psi$: $\MU_s \to  \MS^q([s,T])\times\MM^q([s,T],\R^d)$, $$\Psi(u,v):=(\yone, \zone).$$
In the following, we show that $\Psi$ a contraction on $\MU_s$ when
$\ell:=T-s$ is sufficiently small. 

\textbf{Step 2: Invariance and contraction.}
For the invariance part, by \eqref{law-local-apriori} and \eqref{K0''}, we have
\be\label{est:z1}
\|\yone\|_{\MS^q,[s,T]}^q \le C \left(\| \xi \|_{\ML^q}^q + \ell^{q-1} (\tK_0+ \tilde R^q) \right), \qquad \|\zone\|_{\MM^q,[s,T]}^q\le\tilde R^q.
\ee
Choose $\ell $ smaller again such that $ \ell^{q-1} (\tK_0+ \tilde R^q) < 1+ \tK_0$, and we see the invariance of $\Psi.$

For the contraction part, suppose that $(\bu,\bv) \in \MU_s$ and
$(\by^{(1)},\bz^{(1)},\by^{(2)})$ is the solution to
\eqref{y1'}--\eqref{y2'} with $(u,v)$ replaced by $(\bu,\bv)$.
Let
$(\Delta\yone,\Delta\zone,\Delta\ytwo,\Delta u,\Delta v)
:=(\yone-\by^{(1)},\zone-\bz^{(1)},\ytwo-\by^{(2)},u-\bu,v-\bv)$.
It follows by \eqref{y2'} that for any $t\in[s,T]$,
$$
\Delta \ytwo_t = \int_t^T \left[h_r(\ML(u_r)+\ytwo_r, \ML(v_r))- h_r(\ML(\bu_r)+\by^{(2)}_r, \ML(\bv_r)) \right] dr.
$$
Then by Assumption~\ref{assu:H3} we have
\[
\begin{split}
|\Delta \ytwo_t| & \le K \int_t^T \left[ W_1(\ML(u_r)+ \ytwo_r, \ML(\bu_r) + \by^{(2)}_r ) + (1+\|\ML(v_r)\|_1 +\|\ML(\bv_r)\|_1 )W_1(\ML(v_r), \ML(\bv_r)) \right] dr\\
& \le K \left[ (\|\Delta u\|_{\MS^q,[t,T]}+\| \Delta \ytwo \|_{[t,T]} )(T-t) + \int_t^T (1+ \E|v_r| +  \E|\bv_r| ) \E|\Delta v_r| dr\right].
\end{split}
\]
It follows by the above inequality and choosing $T-s<1/(2K)$ that
\be\label{D-y2'}
\begin{split}
\| \Delta \ytwo \|_{[t,T]}
\le C_K\Big[(T-t)\|\Delta u\|_{\MS^q,[t,T]} +\big((T-t)^{1/2}
  +(T-t)^{1/2-1/q}\widetilde R\big)
  \|\Delta v\|_{\MH^q,[t,T]}\Big].
\end{split}
\ee	
On the other hand, a standard telescoping linearization of
\eqref{y1'} gives
\[
 \Delta\yone_t=\int_t^T
 \big(A_r\Delta\yone_r+B_r\Delta\zone_r
      +C_rG_r(\ML(\zone_r))\Delta\zone_r+b_r\big)dr
 -\int_t^T\Delta\zone_r dB_r,
\]
where $A,B,C$ are bounded by a constant depending only on $K$, and
\be\label{est:br}
\begin{split}
 |b_r|\le C_K\big(&|\Delta\ytwo_r|
 +W_1(\ML(\yone_r+\ytwo_r),
       \ML(\by^{(1)}_r+\by^{(2)}_r))\\
 &+W_1(\ML(\zone_r),\ML(\bz^{(1)}_r))
 +W_1(\ML(\zone_r),\ML(\bz^{(1)}_r))
       |\bz^{(1)}_r|\big).
\end{split}
\ee
Moreover,
\be\label{a'}
\int_t^T |G_r(\ML(\zone_r))|^2 dr
\le2G_2+2K^2(T-t)^{1-2/q}\widetilde R^2.
\ee
According to Lemma~\ref{a-bsde} and \eqref{a'}, for $T-t$
sufficiently small,
\be\label{est-delta}
\begin{split}
	\|\Delta \yone \|_{\MS^q,[t, T] }  + \| \Delta  \zone \|_{\MH^q,[t, T]  }     \lesssim_{K,q,G_2,\widetilde R} \|b\|_{\M^q,[t,T]}.
\end{split}
\ee
By \eqref{us'}, \eqref{D-y2'}, and \eqref{est:br}, we have
\be\label{86}
\begin{split}
\|b\|_{\M^q,[t,T]}
& \le C_{K,q}\Big[
 (T-t)\|\Delta\ytwo\|_{[t,T]}
 +(T-t)\|\Delta\yone\|_{\MS^q,[t,T]}\\
&\qquad 
 +(T-t)^{1/2}\|\Delta\zone\|_{\MH^q,[t,T]}
 +(T-t)^{1/2-1/q}\widetilde R
       \|\Delta\zone\|_{\MH^q,[t,T]}\Big],
\end{split}
\ee
where for the product in the last term we use the fact
\[
 \|(\E|\Delta\zone|)|\bz^{(1)}|\|_{\M^q,[t,T]}
 \le\|\Delta\zone\|_{\MH^2,[t,T]}
      \|\bz^{(1)}\|_{\MH^q,[t,T]}
 \le (T-t)^{1/2-1/q}\widetilde R
      \|\Delta\zone\|_{\MH^q,[t,T]}.
\]
Combining \eqref{est-delta} and \eqref{86}, and  chosing $ \ell$ small, we obtain that 
\[
 \|\Delta\yone\|_{\MS^q,[t,T]}
 +\|\Delta\zone\|_{\MH^q,[t,T]}
 \le C_{K,q,G_2,\widetilde R}(T-t)\|\Delta\ytwo\|_{[t,T]}.
\]
It follows by \eqref{D-y2'} and the above inequality that, after decreasing $\ell$ once more,
\[
 \|\Delta\yone\|_{\MS^q,[s,T]}
 +\|\Delta\zone\|_{\MH^q,[s,T]}
 \le\frac12\left(
   \|\Delta u\|_{\MS^q,[s,T]}
   +\|\Delta v\|_{\MH^q,[s,T]}\right).
\]
Hence $\Psi$ has a fixed point
$(\yone,\zone)\in\MS^q\times\MM^q$.  The corresponding deterministic
$\ytwo$ solves \eqref{y2}, and
$Y=\yone+\ytwo\in\MS^q$.

\noindent\textbf{Step 3: Uniqueness in the full space.}
It remains to prove uniqueness in the full space
$\MS^q\times\MM^q\times\MC$, rather than only in the ball $\MU_s$.
Let $(y^{(1)},z^{(1)},y^{(2)})$ and
$(\bar y^{(1)},\bar z^{(1)},\bar y^{(2)})$ be two solutions of
\eqref{y1}--\eqref{y2} on the same interval $[s,T]$, and use the notation $\Delta (y^{(1)},z^{(1)},y^{(2)})$
for their differences. Let
\[
 R_0:=1+\|z^{(1)}\|_{\MM^q,[s,T]}
       +\|\bar z^{(1)}\|_{\MM^q,[s,T]}<\infty,
\]
and
$|I|=T-s$. Repeating
the estimates \eqref{D-y2'}--\eqref{86} on $I$, without using the
a priori bound $\widetilde R$, yields
\[
\begin{split}
 &\|\Delta y^{(1)}\|_{\MS^q,I}
  +\|\Delta z^{(1)}\|_{\MH^q,I}
  +\|\Delta y^{(2)}\|_{I}\\
 &\quad\le C_{K,q,G_2,T,R_0}
 \left(|I|+|I|^{1/2}+|I|^{1/2-1/q}R_0\right)
 \left(\|\Delta y^{(1)}\|_{\MS^q,I}
  +\|\Delta z^{(1)}\|_{\MH^q,I}
  +\|\Delta y^{(2)}\|_{I}\right).
\end{split}
\]
Since $q>2$, we may choose a finite partition
$s=t_0<t_1<\cdots<t_J=T$ with mesh sufficiently small that the
coefficient on the right-hand side is strictly less than one on every
interval $[t_j,t_{j+1}]$. Note that $y_T^{(1)}=\bar y_T^{(1)}=\xi$ and
$y_T^{(2)}=\bar y_T^{(2)}=0$.  The above estimate then shows
that they coincide on $[t_{J-1},T]$.  Iterating backward over the
partition shows that they coincide on all of $[s,T]$. 

\end{proof}


\begin{rem}\label{rem:global-counterexample}
Assumption~\ref{assu:H3} (or Assumption~\ref{assu:H2prime}) is not sufficient for global existence for \eqref{bsde-law-lb}.  Indeed, let $d=1$
and consider the BSDE
\be\label{global-counterexample-bsde}
Y_t=\frac{1}{3}B_T^3
 +\int_t^T \E[Z_r]Z_r\,dr-\int_t^T Z_r\,dB_r.
\ee
In this case, there is explicit solution 
\[
 \begin{split}
 Y_t&=\frac{1}{3}\bigl[(B_t+A_t)^3
       +3(B_t+A_t)(T-t)\bigr],\\
 Z_t&=(B_t+A_t)^2+T-t.
 \end{split}
\]
where $
 A_t=\sqrt T\tan\bigl(\sqrt T(T-t)\bigr).
$ Indeed, suppose there is a global solution. By taking the Malliavin derivative of the above equation, we have 
\[
 D_uY_t=B_T^2+\int_t^T \E[Z_r] D_uZ_rdr
             -\int_t^TD_uZ_rdB_r,
\]
Thus by changing the measure to 
$
\frac{dQ}{dP}:=\mathcal E\left(\int_0^{\cdot} \E[Z_r]  dB_r\right)_T,$ with 
$ B_t^Q:=B_t-\int_0^t \E[Z_r] dr,$ we have
\[
 D_uY_t=\E_t^Q[B_T^2].
\]
Note that $B_T=B_T^Q-B_t^Q + B_t + \int_t^T \E[Z_r] dr$. Taking this to the above identity, we have 
\[
 Z_t=D_tY_t= (B_t + \int_t^T \E[Z_r] dr)^2 + T-t.
\] 
Taking expectation on both sides above, and we obtain the ODE for $A_t:=\int_t^T \E[Z_r] dr$
\[
- A'_t= A_t^2 + T,\qquad A_T=0.
\]
Note that whenever $T^{3/2}\ge\pi/2$, the solution $A$ has a singularity in $[0,T]$. Thus the solution is just local in view of the uniqueness of solutions by Theorem~\ref{mainlocal}.

\end{rem}

\section{Global well-posedness under an exponential integrability condition}
\label{sec:global}
In view of Remark~\ref{rem:global-counterexample}, the local well-posedness result in Theorem~\ref{mainlocal} cannot be extended to a global result without additional assumptions.
Indeed, the polynomial Malliavin bounds in Assumption~\ref{assu:H3} do not provide solution-independent bound of the determinisitc part $G_t(\ML(Z_t))$. The following condition provides the necessary condition needed to bound this coefficient globally via entropy inequality.

\begin{assumption}\label{assu:EI}
Suppose that Assumption~\ref{assu:H3} holds for coefficients $(\xi, H,F,h, G)$ with parameters $(q,K,K_0,G_2)$. For any $u\in[0,T]$, let
\[
 \mathfrak R_u:=|D_u\xi|
   +\int_u^T\|D_uF_r(\cdot)\|_{\MC_b}dr.
\]
Suppose that there is a deterministic Borel function
$\varsigma:[0,T]\to[0,\infty)$ such that
\be\label{EI-condition}
 \Sigma:=\int_0^T\varsigma_u^2du<\infty,
 \qquad
 \E\exp\left(\frac{\mathfrak R_u^2}{\varsigma_u^2}\right)\le2
 \quad\text{for a.e. }u\in[0,T].
\ee
At points where $\varsigma_u=0$, the second condition in
\eqref{EI-condition} means that $\mathfrak R_u=0$ a.s.
\end{assumption}

\begin{rem}[Terminal moment conditions]
\label{rem:EI-terminal-moments}

Although we do not assume more integrability of $\xi$ in Assumption~\ref{assu:EI}, in view of \eqref{EI-condition} and the Gassian Poincar\'e inequality (see \cite[Chapter~VIII Theorem 3]{Ustunel95}, also \cite[Theorem~2.7]{AddonaMuratoriRossi22} ), $|\xi-\E\xi|$ has exponential integrability as well.

\end{rem}


\begin{thm}\label{thm:global-wp}
Suppose that Assumption~\ref{assu:EI} holds.  Then
\eqref{bsde-law-lb} has a unique solution
$(Y,Z)\in\MS^q([0,T])\times\MM^q([0,T],\R^d)$.
Moreover,
\be\label{global-solution-bound}
 \|Y\|_{\MS^q,[0,T]}^q+\|Z\|_{\MM^q,[0,T]}^q
 \le C_{q,K,T,G_2,\Sigma}
 \left(\|\xi\|_{\ML^q}^q+\|k\|_{L^1}^q+K_0+K_0^2\right).
\ee
\end{thm}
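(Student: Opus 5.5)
The plan is to extend the local solution of Theorem~\ref{mainlocal} backward in time by proving an a priori bound on $\|Z\|_{\MM^q}$, and hence on $a_r:=G_r(\ML(Z_r))$ in $L^2$, that does not depend on the time horizon of the solution. The local existence time $\delta$ in Theorem~\ref{mainlocal} depends only on $(q,K,K_0,G_2,\|\xi\|_{\ML^q},\|k\|_{L^1})$. So it suffices to show two things for any solution on $[s,T]$: the restarted terminal value $Y_s$ stays in $\ML^q\cap\D^{1,2}$ with controlled norms, and the Malliavin data of the restarted problem remain controlled. We then iterate on intervals of uniform length, and uniqueness on each piece follows from Theorem~\ref{mainlocal}.

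\textbf{Step 1: entropy bound on $DY$.} Let $(Y,Z)$ be a solution on $[s,T]$. Freeze its laws, so that $\ytwo$ is deterministic and $a$ is deterministic. By Proposition~\ref{equiv} and Proposition~\ref{m-d}, applied to the frozen driver after mollification as in Step~2 of Theorem~\ref{local-wp-law}, $D_uY_t$ solves a linear BSDE:
\[
D_uY_t=D_u\xi+\int_t^T\big[D_uF_r+\partial_yF\,D_uY_r+\partial_zF\,D_uZ_r+\partial_{\MZ}F\,a_rD_uZ_r\big]dr-\int_t^TD_uZ_rdB_r .
\]
The derivative of $\ytwo$ vanishes, since $\ytwo$ is deterministic. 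Since $\partial_yF$ is bounded, we change to the measure $Q$ whose Girsanov kernel is $\partial_zF+\partial_{\MZ}F\,a_r$. This kernel is bounded by $K(1+|a_r|)$, so $Q$ is a genuine measure with $\int|kernel|^2dr\le 2K^2(T+\|a\|^2_{L^2})$ deterministic. We obtain
\[
|D_uY_t|\le e^{KT}\E^Q_t[\mathfrak R_u].
\]
The key point is that $\E^Q[\mathfrak R_u]$ is controlled through the entropy (Donsker–Varadhan) inequality,
\[
\E^Q[\mathfrak R_u]\le \varsigma_u\big(H(Q|P)+\log 2\big)^{1/2}\cdot C,
\]
using $\lambda xy\le e^{x^2}+\dots$ optimized in $\lambda$. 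Here $H(Q|P)=\tfrac12\E^Q\int|kernel|^2\le K^2(T+\|a\|_{L^2([t,T])}^2)$. Taking $t=u$ and expectation under $P$ gives
\[
|\E Z_u|\le \E|Z_u|\le C\varsigma_u\big(1+\|a\|_{L^2([u,T])}\big).
\]
This step is a little delicate: an unconditional expectation of $D_uY_u$ under $P$ is needed. I would instead bound $\E|D_uY_u|$ by the $Q$-expectation via another entropy/Hölder step, or directly control $\E|\E^Q_u\mathfrak R_u|$ using the exponential moments and a Hölder bound on the density $dQ/dP$, which has moments bounded in terms of $\|a\|_{L^2}$ because $a$ is deterministic. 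In the simplest case $G(\rho)=\bar z(\rho)$ plus an $L^2$ term, we have $|a_u|\le |G_u(\delta_0)|+K\E|Z_u|$.

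\textbf{Step 2: Gronwall closure (main obstacle).} Let $\phi(t):=\int_t^T|a_r|^2dr$. Step 1 gives $|a_u|^2\le 2|G_u(\delta_0)|^2+C\varsigma_u^2(1+\phi(u))$ with a \emph{linear} dependence on $\phi$. This linearity is exactly what the entropy inequality buys, as opposed to the exponential dependence in \eqref{est:Duyt}, which produced the blow-up of Remark~\ref{rem:global-counterexample}. Gronwall then yields $\phi(s)\le (2G_2+C\Sigma)e^{C\Sigma}$, uniformly in $s$. I expect the main difficulty to be making the entropy estimate linear rather than exponential in $\|a\|^2$. If the $P$-expectation issue is problematic, I would work with $\E^Q$ of $\mathfrak R$ conditioned on $\MF_u$ and use $\E[\E^Q_u X]\le\E^Q[X]$ only after a further change of measure back. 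Failing that, I would run the argument under $L^1$ norms, accepting a $K_0^2$ term, which matches the $K_0^2$ in \eqref{global-solution-bound}.

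\textbf{Step 3: remaining bounds and continuation.} With $\|a\|_{L^2([s,T])}$ bounded, \eqref{m-d-zq} gives $\|Z\|_{\MM^q}^q\le CK_0$ uniformly. Equation \eqref{y2} with the growth of $h$ bounds $\|\ytwo\|$ by $\|k\|_{L^1}+C\|Z\|^2_{\MM^2}+\dots$, hence the $K_0^2$ term. Lemma~\ref{a-bsde} bounds $\|\yone\|_{\MS^q}$, which together yield \eqref{global-solution-bound} on any interval of existence. We restart at $s$ with terminal value $Y_s$. Its Malliavin derivative is $\E^Q$-bounded by $\mathfrak R_u$ and the $F$-part, so the restarted data satisfy Assumption~\ref{assu:H3} with constants controlled by the uniform bounds. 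Hence Theorem~\ref{mainlocal} provides a uniform step $\delta$, and finitely many steps reach $0$. Uniqueness follows by applying local uniqueness on each subinterval backwards.
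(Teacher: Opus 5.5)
\textbf{Where your proposal has a gap.} Your architecture is the paper's. You freeze the laws, write $Z_u=D_uY_u$ via the linearized Malliavin BSDE, and change measure with kernel $\beta=\partial_zF+\partial_{\MZ}F\,a$. You control $\E^{Q}\mathfrak R_u$ by $\varsigma_u(\mathcal H(Q\mid P)+\log 2)^{1/2}$ through the entropy inequality, close a linear backward Gronwall inequality for $\phi(t)=\int_t^T|a_r|^2dr$, and restart with uniform $L^q$ and Malliavin bounds.

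The gap is the step you flag yourself and leave open: passing from the conditional bound $|D_uY_u|\le e^{KT}\E^{Q}_u[\mathfrak R_u]$ to a bound on $\E|Z_u|$ under $P$. The missing observation takes one line. Start the stochastic exponential at $u$, which is natural since the linearized equation lives only on $[u,T]$. That is, take $dQ^u/dP=L^u_T:=\mathcal E(\int_u^\cdot\beta_r\,dB_r)_T$. Then $\E[L^u_T\mid\MF_u]=1$, so $Q^u$ and $P$ coincide on $\MF_u$. Since $\E^{Q^u}_u[\mathfrak R_u]$ is $\MF_u$-measurable,
\[
\E|Z_u|=\E|D_uY_u|\le e^{KT}\,\E\bigl[\E^{Q^u}_u\mathfrak R_u\bigr]=e^{KT}\,\E^{Q^u}[\mathfrak R_u]\le e^{KT}\varsigma_u\Bigl(\log 2+K^2(T-u)+K^2\int_u^T|a_r|^2dr\Bigr)^{1/2}.
\]
No second change of measure or extra entropy step is needed. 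Linearity in $\phi$ is also not a separate difficulty: it follows automatically from $\mathcal H(Q^u\mid P)=\frac12\E^{Q^u}\int_u^T|\beta_r|^2dr$ together with the deterministic bound $|\beta_r|^2\le 2K^2(1+|a_r|^2)$.

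\textbf{The fallbacks you list should be dropped.} A H\"older bound $\E[L^u_T\mathfrak R_u]\le\|L^u_T\|_{\ML^p}\|\mathfrak R_u\|_{\ML^{p'}}$ with a fixed $p>1$ gives $\|L^u_T\|_{\ML^p}\le\exp\bigl((p-1)K^2(T+\phi(u))\bigr)$. Hence $(\E|Z_u|)^2\lesssim\varsigma_u^2e^{C\phi(u)}$, and you are left with $-\phi'\lesssim|G_\cdot(\delta_0)|^2+\varsigma^2e^{C\phi}$, which can blow up in finite time. This throws away exactly what the entropy inequality buys; only optimizing $p$ against $\phi(u)$ would recover it.

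Switching to $L^1$ norms is also beside the point. As you correctly note in your Step 3, the $K_0^2$ in \eqref{global-solution-bound} comes from the term $K(\E|Z_r|)^2$ in the growth of $h$. Indeed $\bigl(\int_s^T(\E|Z_r|)^2dr\bigr)^q\le T^{q-2}\bigl(\int_s^T\E|Z_r|^qdr\bigr)^2$, and the last integral is of order $K_0$.

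\textbf{The restart step.} Control $\int_0^s\E|D_uY_s|^q\,du$ by \eqref{est:Duyt}, which is uniform in $s$ once $\phi(s)$ is bounded by your Gronwall constant. If you instead bound the $q$-th moments of $\mathfrak R_u$ through the exponential condition, you produce $\int_0^s\varsigma_u^q\,du$, which is not assumed finite for $q>2$.
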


\begin{proof}

In view of Theorem \ref{mainlocal}, there exists a unique solution $(Y,Z)\in\MS^q\times\MM^q$ on some interval $[s,T]$. Moreover, since the length $(T-s)$ depends only on the parameters $(q,K,K_0,G_2,\allowbreak\|\xi\|_{\ML^q},\allowbreak\|k\|_{L^1,[0,T]})$, it suffices to prove the a priori estimate of $(\|Y_{s}\|_{\ML^q}, \E \int_0^T |D_uY_{s}|^q\,du )$ independently of $s\in[0,T]$, such that the local solution can be iterated to cover the whole interval $[0,T]$. Now set
\begin{align*}
a_r:=G_r(\ML(Z_r)),
 \quad \Gamma_t:=\int_t^T h_r(\ML(Y_r),\ML(Z_r))dr,\quad g_r(y,z,\MZ):=F_r(y+\Gamma_r,z,\ML(Y_r),\ML(Z_r),\MZ).
\end{align*}
Then we have that for any $t\in [s,T]$,
\be\label{ine:bdd-int-a}
\int_t^T|a_r|^2dr
\le 2G_2 + 2K^2 \int_t^T (\E|Z_r|)^2 dr.
\ee
Thus $(Y-\Gamma,Z)$ solves BSDE \eqref{BSDE2} with corresponding coefficients $(\xi, a,g)$ above.

\textbf{Step 1: Bound on $\E|Z_t|$ via the entropy inequality in the smooth case.}
We first assume that
$F$ is smooth in $(y,z,\MZ)$, i.e. only in its Euclidean arguments. Later in Step~3 we remove this assumption. Note that $D_ug_r=D_uF_r(y+\Gamma_r,z,\ML(Y_r),\ML(Z_r),\MZ)$. According
to Proposition~\ref{m-d}, we have $Z_t=D_tY_t$ and furthermore,
\be\label{eq:global-malliavin}
 \begin{split}
 D_tY_r={}&D_t\xi
   +\int_r^T\bigl[D_tF_v(\Theta_v)
                  +\alpha_vD_tY_v
                  +\beta_vD_tZ_v\bigr]dv
   -\int_r^T D_tZ_vdB_v,
 \qquad r\in[t,T],
 \end{split}
\ee
where $|\alpha_v|\le K$ and
\be\label{eq:global-beta}
 \beta_v=\partial_zF_v+\partial_{\MZ}F_v\,a_v,
 \qquad
 |\beta_v|^2\le2K^2(1+|a_v|^2).
\ee
Here $\Theta_v$ denotes all the arguments of $F_v$. 
Now define
\[
 \frac{dQ^t}{dP}
 :=\mathcal E\left(\int_t^{\cdot}\beta_r dB_r\right)_T.
\]
In view of \eqref{eq:global-beta}, the above
stochastic exponential is a uniformly integrable martingale.  Hence by Girsanov's theorem and \eqref{eq:global-malliavin}, 
\be\label{global-trace-representation}
 \begin{split}
 |Z_t|=|D_tY_t|
 \le e^{KT}\E_t^{Q^t}\left[
       |D_t\xi|+\int_t^T|D_tF_r(\Theta_r)|dr\right] \le e^{KT}\E_t^{Q^t}[\mathfrak R_t].
 \end{split}
\ee
In the following we establish a bound on the right-hand side that is independent of the solution $(Y,Z)$ by the entropy inequality. 
Let $L_T^t:=dQ^t/dP$ and define the relative entropy
\[
 \mathcal H(Q^t\mid P):=\E^{Q^t}\log L_T^t.
\]
Under $Q^t$,
$B_r^{Q^t}:=B_r-\int_t^r\beta_vdv$, $r\in[t,T]$, is a Brownian
motion, and hence
\[
 \log L_T^t
 =\int_t^T\beta_r dB_r^{Q^t}
  +\frac12\int_t^T|\beta_r|^2dr.
\]
Therefore, in view of \eqref{eq:global-beta}, we have
\be\label{global-entropy-bound}
 \mathcal H(Q^t\mid P)
 =\frac12\E^{Q^t}\int_t^T|\beta_r|^2dr
 \le K^2\left(T-t+\int_t^T|a_r|^2dr\right).
\ee
On the other hand, by Young's inequality and
\eqref{EI-condition}, for any $\lambda>0$,
\be\label{eq:bd-log-e}
 \log\E e^{\lambda\mathfrak R_t}
 \le\log2+\frac{\lambda^2\varsigma_t^2}{4}.
\ee
Applying the entropy inequality (see, e.g.,
\cite[Proposition~1.4.2]{DupuisEllis97}), \eqref{global-entropy-bound} and \eqref{eq:bd-log-e}, we have
\be\label{ine-entropy}
 \lambda\E^{Q^t}\mathfrak R_t
 \le \mathcal H(Q^t\mid P)+\log\E e^{\lambda\mathfrak R_t}
 \le \mathcal H(Q^t\mid P)+\log2
       +\frac{\lambda^2\varsigma_t^2}{4}.
\ee
Dividing the above inequality by $\lambda$, we have 
\[
 \E^{Q^t}\mathfrak R_t
 \le\frac{\mathcal H(Q^t\mid P)+\log2}{\lambda}
     +\frac{\lambda\varsigma_t^2}{4}.
\]
Note that the right-hand side is minimized at
$\lambda=2\sqrt{\mathcal H(Q^t\mid P)+\log2}/\varsigma_t$, and hence
\be\label{est:Rt}
 \big(\E^{Q^t}\mathfrak R_t\big)^2
 \le\varsigma_t^2\big(\mathcal H(Q^t\mid P)+\log2\big).
\ee
Note that $Q^t$ and $P$ agree on $\MF_t$ in view of the fact that $\E[L_T^t\mid\MF_t]=1$. Thus by \eqref{global-trace-representation}, we have
\[
 \E|Z_t|
 \le e^{KT}\E\big[\E_t^{Q^t}\mathfrak R_t\big]
 =e^{KT}\E^{Q^t}\mathfrak R_t.
\]
Then by the above inequality, \eqref{est:Rt} and \eqref{global-entropy-bound}, we have
\be\label{est:mean-Z}
\begin{split}
 (\E|Z_t|)^2
 \le e^{2KT}\varsigma_t^2
       \left(\log2+K^2T+K^2\int_t^T|a_r|^2dr\right).
\end{split}
\ee
Let
$K_T:=\log2+K^2T$.  
Thus in view of \eqref{ine:bdd-int-a} and \eqref{est:mean-Z}, we obtain
\begin{equation*}
 \int_t^T|a_r|^2dr\le2G_2+2K^2e^{2KT}
       \int_t^T\varsigma_r^2
       \left(K_T+K^2\int_r^T|a_v|^2dv\right)dr.
\end{equation*}
Then it follows by the backward Gronwall inequality that
\be\label{ine:uni-bd-a}
 \int_t^T|a_r|^2dr\le A_*,
\ee
where 
$
 A_*:=\left(2G_2+2K^2e^{2KT}K_T\Sigma\right)
       \exp\left(2K^4e^{2KT}\Sigma\right).
$

\textbf{Step 2: Global estimates of $Y$ and $DY$.}
Apply Proposition \ref{m-d} to the law frozen equation
\eqref{BSDE2} driven by $(\xi,a,g)$, and we have
\be\label{est:global-Zq}
 \int_s^T\E|Z_t|^qdt
 \le K_0\exp\left\{2q\left[(1+K^2)T+K^2A_*\right]\right\}
 =:M_*.
\ee
Both $A_*$ and $M_*$ are independent of the left endpoint $s$ and $(Y,Z)$.  Moreover,
the above bound implies
\be\label{global-restart-moment-bound}
 \int_s^T(\E|Z_t|)^2dt\le T^{1-2/q}M_*^{2/q}.
\ee
Note that 
$
 |h_t(\ML(Y_t),\ML(Z_t))|
 \le k_t+K\E|Y_t|+K(\E|Z_t|)^2,
$ and it follows that 
\be
\begin{split}
|g_r(0)|
={}&|F_r(\Gamma_r,0,\ML(Y_r),\ML(Z_r),0)|\\
\le{}&K\int_r^T\left(k_t+K\E|Y_t|+K(\E|Z_t|)^2\right)dt
 +K\E|Y_r|+K\E|Z_r|+K | F_r(0)|.
\end{split}
\ee
Then by Lemma~\ref{a-bsde}, applied to the law frozen BSDE, and \eqref{global-restart-moment-bound}, we obtain 
\[
 \E\sup_{t\le r\le T}|Y_r|^q
 \le C_{q,K,T,A_*}\left[
   \|\xi\|_{\ML^q}^q+K_0+\|k\|_{L^1}^q
   +M_*+M_*^2
   +\int_t^T \E\sup_{r\le \ell \le T}|Y_\ell |^q dr\right],
 \qquad t\in[s,T].
\]
Applying the backward Gronwall inequality again and we have
\be\label{global-Y-restart-bound}
 \E \sup_{s\le r\le T} |Y_r|^q
 \le C_{q,K,T,A_*}\left(
 \|\xi\|_{\ML^q}^q+K_0+\|k\|_{L^1}^q+M_*+M_*^2\right)=:C_Y,
\ee
where $C_Y$ is independent of $(s,Y,Z)$.

On the other hand, by applying Proposition~\ref{m-d} to the law frozen BSDE
driven by $(\xi, a,g),$ and noting that $\Gamma$ is deterministic, we have
\be\label{global-restart-derivative}
\begin{split}
 \int_0^s\E|D_uY_s|^qdu
 \le C_*\left[\int_0^s\E|D_u\xi|^qdu
 +\int_0^s\int_s^T
   \E\|D_uF_r(\cdot)\|_{\MC_b}^qdrdu\right]
 \le C_* K_0,
\end{split}
\ee
where
$
 C_*:=C_{q,K,T}
 \exp\left\{2q\left[(1+K^2)T+K^2A_*\right]\right\}.
$
Moreover, by adaptedness of $Y$, we have $D_uY_s=0$ for $u>s$.
In particular, $Y_s\in\ML^q\cap\D^{1,2}$ and moreover, the new coefficients $(Y_s,H, F,h,G)$ 
satisfy Assumption~\ref{assu:H3} on $[0,s]$ with the bound in \eqref{law-assu:K0} replaced by $(1+C_*)K_0$ uniformly in $s$.  

Then we may choose a parameter $\delta_*$, depending only on $(q,K,K_0, C_Y, C_*, T)$, such that the
solution exists in $[(s-\delta_*)\vee0,s]$. Repeating this procedure finitely many times covers $[0,T]$ and proves the global well-posedness. Moreover, note that estimates in Step 1 and Step 2 hold globally, and thus by \eqref{est:global-Zq}, \eqref{global-Y-restart-bound}, we have the global estimate \eqref{global-solution-bound}.

\textbf{Step 3: Removal of the smoothness assumption.}
We now drop the smoothness of $F$.  Let $(Y,Z)$ be a solution on some $[s,T]$, and let $\bar Y:=Y-\Gamma$. Define $(a,\Gamma,g)$ as in Step 1.  Since $q>2$ and $Z\in\MM^q([s,T])$,
\[
 \int_s^T|a_r|^2dr
 \le2G_2+2K^2(T-s)^{1-2/q}\|Z\|_{\MM^q,[s,T]}^2<\infty.
\]
Mollifying only the Euclidean variables $x=(y,z,\MZ)$ and we let 
$g^\varepsilon=g*\phi^\varepsilon$.  Then
\[
 |g^\varepsilon-g|\le CK\varepsilon,\qquad
 \|\nabla g^\varepsilon\|_\infty\le K,\qquad
 \|D_ug_r^\varepsilon(\cdot)\|_{\MC_b}
 \le\|D_uF_r(\cdot)\|_{\MC_b}.
\]
Let $(\bar Y^\varepsilon,Z^\varepsilon)$ solve \eqref{BSDE2} with data
$(\xi,g^\varepsilon,a)$.  By Lemma~\ref{stable1}, we have
\[
 (\bar Y^\varepsilon,Z^\varepsilon)\rightarrow(\bar Y,Z)
 \quad\text{in }\MS^q([s,T])\times\MH^q([s,T]).
\]
Thus along a subsequence,
$Z_t^\varepsilon\to Z_t$ in $L^2(\Omega)$ for a.e. $t$, and hence
$\E|Z_t^\varepsilon|\to\E|Z_t|$.  Therefore \eqref{est:mean-Z} holds for the
original driver. Then by the backward Gronwall argument in Step~2, we now have
\[
 \int_t^T|a_r|^2dr\le A_*,
 \qquad
 \int_s^T\E|Z_t|^qdt\le M_*.
\]
Then \eqref{global-restart-moment-bound} and
\eqref{global-Y-restart-bound} still hold in this case, which provides the uniform estimate of $\|Y\|_{\MS^q,[s,T]}$.

It remains only to show the uniform bound for $D_u Y_s$.
Applying Proposition~\ref{m-d} to the BSDE driven by $(\xi, a, g^{\vep})$, we have
\[
 \int_0^s\E|D_u\bar Y_s^\varepsilon|^qdu
 \le C_*\left[\int_0^s\E|D_u\xi|^qdu
 +\int_0^s\int_s^T
   \E\|D_uF_r(\cdot)\|_{\MC_b}^qdrdu\right]
 \le C_*K_0.
\]
Thus by the weak compactness of $L^2([0,s]\times\Omega)$, along a subsequence,
$D\bar Y_s^\varepsilon\rightharpoonup U$ in $L^2([0,s]\times\Omega)$. Moreover, note that $\bar Y_s^\varepsilon\to\bar Y_s$ in $L^q(\Omega)$. By the weak closedness of the
graph of the Malliavin derivative, we have $\bar Y_s\in\D^{1,2}$ and
$U=D\bar Y_s$. Since $\Gamma_s$ is deterministic, $DY=D \bar Y$, and by the weak lower semicontinuity, we have the bound \eqref{global-restart-derivative} for $D_u Y_s$.

\end{proof}

\begin{example}
Consider the BSDE in Remark~\ref{rem:global-counterexample}, but with terminal
condition $\xi=\frac12B_T^2$, namely
\[
 Y_t=\frac12B_T^2+
 \int_t^T\E[Z_r]Z_r\,dr-\int_t^TZ_r\,dB_r.
\]
This corresponds to
\[
 F_t(y,z,\nu,\rho,w)=w,\qquad
 h_t(\nu,\rho)=0,
 \qquad G_t(\rho)=\int_{\R}z\,\rho(dz),
\]
so the coefficients satisfy Assumption~\ref{assu:H3} for every $q>2$.
Since $F$ is deterministic, for a.e. $u\in[0,T]$ we have
$D_u\xi=B_T$ and hence $\mathfrak R_u=|B_T|$.  Set
\[
 \varsigma_u:=\sqrt{\frac{8T}{3}},\qquad u\in[0,T].
\]
Since $B_T\sim N(0,T)$,
\[
 \int_0^T\varsigma_u^2du=\frac{8T^2}{3},\qquad
 \E\exp\left(\frac{\mathfrak R_u^2}{\varsigma_u^2}\right)
 =\E\exp\left(\frac{3B_T^2}{8T}\right)=2.
\]
Thus Assumption~\ref{assu:EI} holds.  The unique global solution is explicitly
\[
 Y_t=\frac12\bigl(B_t^2+T-t\bigr),\qquad Z_t=B_t,
 \qquad t\in[0,T].
\]
Indeed, $\E[Z_t]=0$, and It\^o's formula gives
\[
 \frac12B_T^2-\int_t^TB_r\,dB_r
 =\frac12\bigl(B_t^2+T-t\bigr).
\]
Moreover, $(Y,Z)\in\MS^q\times\MM^q$ for every finite $q$.
\end{example}

\begin{rem}[A related pathwise quadratic case]
\label{ex:pathwise-quadratic-nonexistence}
The exponential integrability condition \eqref{EI-condition} in
Assumption~\ref{assu:EI} does not ensure global existence if a pathwise
quadratic term is allowed in the generator. Consider the above BSDE but with the generator $g(z)=z^2$, i.e. 
\[
 Y_t=\frac12B_T^2+ \int_t^T Z_r^2\,dr
       -\int_t^T Z_r\,dB_r,\qquad t\in[0,T].
\]
Note that the terminal satisfies Assumption~\ref{assu:EI}.
Nevertheless, the BSDE has no global  solution on $[0,T]$ for any $T>1$.  Indeed, if such a solution existed,
applying It\^o's formula to $e^{2 Y_t}$,  we obtain
\[
 d\bigl(e^{2 Y_t}\bigr)
 =2e^{2 Y_t}Z_t\,dB_t.
\]
Since $Y_0$ is a finite constant under the augmented Brownian filtration,
$e^{2 Y}$ is a positive local martingale and hence a supermartingale.
Consequently,
\[
 \E e^{2 \xi}\le e^{2 Y_0}<\infty.
\]
This contradicts
$\E e^{2 \xi}=\E e^{  B_T^2 }=\infty$ when $  T\ge1$. See Briand and Hu
\cite{BriandHu2006PTRF} for more discussion.

\end{rem}

\section{Applications to the mean-field equilibrium models}
\label{sec:utility-application}

We now apply the results of Sections~\ref{sec:global} to the two models in
Section~\ref{sec:utility-model}.  The
first subsection treats the price-impact portfolio game of
Section~\ref{sec:price-impact-model}, and the second treats the
finite-contract pricing model of Section~\ref{sec:pricing-model}.

\subsection{The mean-field utility maximization game with price-impact}
\label{sec:price-impact-application}

Let $I=[0,T]$. 
Consider the admissible control set
\begin{equation}\label{utility-admissible-class}
 \MA_{I}:=\MM^q(I).
\end{equation}
Recall that for any $\alpha\in\MA_{I}$ and price impact flow $m  $, the state process $X^{\alpha,m}$ is given by
\[
 X_t^{\alpha,m}
 :=x+\int_0^t\alpha_r(b_r+m_r)dr
      +\int_0^t\alpha_r\sigma_r dB_r,
 \qquad t\in I.
\]
The cost functional is given by
\[
 J (\alpha;m)
 :=\E\left[-\exp\left(-\eta
       (X_T^{\alpha,m}-\xi)\right)\right].
\]

\begin{defi}\label{def:local-mf-equilibrium}

A mean-field equilibrium on $I=[0,T]$ is a pair $(\widehat\alpha,m)$ with
deterministic $m\in L^2(I)$ and
$\widehat\alpha\in\MA_{I}$ such that
\be\label{eq:local-mf-equilibrium}
 J (\widehat\alpha;m)
 =\sup_{\alpha\in\MA_{I}}J (\alpha;m),
 \text{ and } \ \ m_t=\E[\widehat\alpha_t]\quad\text{for a.e. }t\in I.
\ee

\end{defi}

For the well-posedness of the mean-field equilibrium, we assume that 

\begin{assumption}
\label{assu:price-impact-equilibrium}
Fix $q>2$ and suppose that $\eta>0$.

(i). Suppose $b$, $\sigma$ are deterministic, and for some $L> 0$ and
$\varepsilon>0$, 
$$ |b_t|+|\sigma_t|+|\sigma_t^{-1}|\le L,
 \ 
 c_t^{-1}:= 1-\frac{1}{\eta \sigma_t^2} \ge\varepsilon. $$ 
(ii). $\xi \in \ML^q \cap \D^{1,2}$ satisfies $\int_0^T\E|D_t\xi|^qdt<\infty$, and moreover, there is a
deterministic Borel function $\varsigma:[0,T]\to[0,\infty)$ such that
\begin{equation}\label{utility-exponential-data}
 \int_0^T\varsigma_u^2du<\infty,
 \qquad
 \E\exp\left(\frac{|D_u\xi|^2}{\varsigma_u^2}\right)\le2
 \quad\text{for a.e. }u\in[0,T].
\end{equation}

\end{assumption}

\begin{rem}

Note that for any fixed impact flow $m\in L^2(I)$, set
\[
 \theta_t^m:=\frac{b_t+m_t}{\sigma_t},\qquad
 \Lambda_t^{m}:=
 \mathcal E\left(-\int_0^\cdot\theta_r^m\,dB_r\right)_t,
 \qquad \frac{dQ^{m}}{dP}:=\Lambda_T^{m}.
\]
Under Assumption \ref{assu:price-impact-equilibrium}, we have
\[
 \int_0^T|\theta_t^m|^2dt
 \le C\bigl(|I|+\|m\|_{L^2(I)}^2\bigr),
\]
so $\Lambda^{m}$ is a uniformly integrable martingale with moments of
every order.  Thus $Q^{m}$ is well defined, and
$B_t^{m}:=B_t+\int_0^t\theta_r^m dr$, $t\in I$, is a Brownian
motion under $Q^{m}$.  For a predictable control $\alpha$, define
\[
 X_t^{\alpha,m}
 :=x+\int_0^t\alpha_r(b_r+m_r)dr
      +\int_0^t\alpha_r\sigma_r dB_r,
 \qquad t\in I,
\]
so that $X_t^{\alpha,m}=x+\int_0^t\alpha_r\sigma_r\,dB_r^{m}$ under
$Q^{m}$.  Since $q>2$ and $\Lambda_T^{m}$ has moments of every order. Moreover, by H\"older's inequality, we see that for any $\alpha\in\MA_{I}$
\[
 \E^{Q^{m}}\int_0^T|\alpha_t\sigma_t|^2dt<\infty.
\]
Thus our admissible strategy class
$\MA_{I}=\MM^q(I)$ is consistent with the usual admissibility
condition (see \cite[Definition 1]{HuImkellerMuller05} and remarks therein).  

\end{rem}
\begin{rem}

Indeed, we may apply results of Section \ref{sec:local-solvability} to  consider local equilibria that allows the drift to be stochastic. However, to directly apply Theorem \ref{thm:global-wp}, we assume deterministic coefficients in the model so the exponential integrability condition \eqref{EI-condition} naturally applies.

\end{rem}

\begin{thm}\label{thm:utility-equilibrium}
Suppose that Assumption~\ref{assu:price-impact-equilibrium} holds. Then \eqref{utility-mf-qbsde} has a unique solution
$(Y,Z)\in\MS^q([0,T])\times\MM^q([0,T])$. Moreover, define
\begin{align}\label{eq:utility-equilibrium-alpha-m}
 m_t^* :=c_t\left(\frac{\E[Z_t]}{\sigma_t}
       +\frac{b_t}{\eta\sigma_t^2}\right),
 \quad
 \alpha_t^* :=\frac{Z_t}{\sigma_t}
   +\frac{c_t}{\eta\sigma_t^2}
       \left(\frac{\E[Z_t]}{\sigma_t}+b_t\right)
 =\frac{Z_t}{\sigma_t}
   +\frac{b_t+m_t^*}{\eta\sigma_t^2}.
\end{align}
Then $m^*\in L^q([0,T])$,
$\alpha^*\in\MA$, and
$(\alpha^*,m^*)$ is the unique mean-field equilibrium in the sense of Definition~\ref{def:local-mf-equilibrium}.
\end{thm}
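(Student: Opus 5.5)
The plan has three parts: reduce to Theorem~\ref{thm:global-wp} by checking Assumption~\ref{assu:EI} for the data of \eqref{utility-mf-qbsde}, verify optimality of $\alpha^*$ against $m^*$ by the martingale optimality principle, and get uniqueness by showing that every equilibrium produces a solution of \eqref{utility-mf-qbsde}.

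\textbf{Step 1: well-posedness of \eqref{utility-mf-qbsde}.} With $\sigma,b$ deterministic, we have $\E[b_t]=b_t$. Equation \eqref{utility-mf-qbsde} is then of the form \eqref{utility-mf-law-form}, with $G_t(\rho)=\bar z(\rho)$, $F$ as defined after \eqref{utility-mf-qbsde}, and $h_t(\nu,\rho)=-c_t^2\bar z(\rho)^2/(2\eta\sigma_t^4)$. We check Assumption~\ref{assu:H3} term by term.
\begin{itemize}
\item $F$ is affine in $(z,\bar z(\rho),\MZ)$ with coefficients bounded by a constant depending on $(L,\varepsilon,\eta)$, since $c_t\le\varepsilon^{-1}$. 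Because $|\bar z(\rho)-\bar z(\rho')|\le W_1(\rho,\rho')$, $F$ is Lipschitz in the required sense.
\item $G$ is $1$-Lipschitz in $W_1$ and $G_t(\delta_0)=0$, so $G_2=0$.
\item $F_r(0)$ is a bounded deterministic function.
\item Since $F$ is deterministic, $D_uF_r\equiv0$, and $\mathfrak R_u=|D_u\xi|$.
\item $h$ satisfies $|h_t(\delta_0,\rho)|\le K\|\rho\|_1^2$ with $k=0$. It also satisfies the difference bound, because $|\bar z(\rho_1)^2-\bar z(\rho_2)^2|\le(\|\rho_1\|_1+\|\rho_2\|_1)W_1(\rho_1,\rho_2)$.
\end{itemize}
Finally, \eqref{utility-exponential-data} is exactly \eqref{EI-condition}, so Assumption~\ref{assu:EI} holds. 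Theorem~\ref{thm:global-wp} then gives the unique $(Y,Z)\in\MS^q\times\MM^q$. Since $\E|Z_t|\le(\E|Z_t|^q)^{1/q}$ and $Z\in\MM^q$, we get $m^*\in L^q$, and then $\alpha^*\in\MM^q=\MA_I$.

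\textbf{Step 2: $\alpha^*$ is optimal against $m^*$, and the consistency condition holds.} The identity \eqref{impact-fixed-point} with $a_t=\sigma_t^{-2}$ yields $\E[\alpha^*_t]=m^*_t$ by construction. Substituting $m^*$ into \eqref{best-response-driver} shows that $(Y,Z)$ solves the linear BSDE with driver $f^{m^*}$.

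For any $\alpha\in\MA_I$, the process $R^{\alpha,m^*}$ has nonpositive drift by \eqref{candidate-value-drift}, and zero drift when $\alpha=\alpha^*$. We need $R^{\alpha,m^*}$ to be a genuine supermartingale, and a true martingale for $\alpha^*$. I would pass to $Q^{m^*}$ from the remark above and write $R_t=-e^{-\eta(x-Y_0)}\mathcal E(\cdot)_t\exp(\text{nonpositive drift integral})$. Equivalently, one can localize with stopping times and pass to the limit. This needs uniform integrability of $\exp(-\eta(X^{\alpha}_\tau-Y_\tau))$ over stopping times $\tau$.

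\textbf{Main obstacle.} This uniform integrability step is the delicate point, since $\alpha$ lies only in $\MM^q$ and $Y$ is only in $\MS^q$. I would handle it as follows.
\begin{itemize}
\item Exponential moments of $\sup|Y|$: use Remark~\ref{rem:EI-terminal-moments} together with the representation $Y_t=\E^{Q}_t[\xi+\int_t^T\cdots]$ under the measure with deterministic drift $(b+m^*)/\sigma$. Since this drift is deterministic, $\sup_t|Y_t|$ inherits the exponential integrability of $\xi-\E\xi$, via Doob's inequality for exponential martingales.
\item Optimal control: for $\alpha^*$, one has $R^{\alpha^*}=R_0\,\mathcal E(-\eta\int(\alpha^*\sigma-Z)dB)$ with $\alpha^*\sigma-Z=(b+m^*)/(\eta\sigma)$ deterministic. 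So it is a true martingale, and no integrability issue arises there.
\item General $\alpha$: localize, then use Fatou's lemma on $-R$. Since $R<0$, Fatou applied to $-R_{\tau_n}$ gives $\E[-R_T]\ge\limsup_n\E[-R_{\tau_n}]\ge -R_0$, which is the inequality needed. Note that Fatou runs in the favourable direction for a negative supermartingale, which I expect makes the general-$\alpha$ case easy.
\end{itemize}
The conclusion is $J(\alpha;m^*)\le-e^{-\eta(x-Y_0)}=J(\alpha^*;m^*)$.

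\textbf{Step 3: uniqueness of the equilibrium.} Let $(\widehat\alpha,m)$ be any equilibrium with $m\in L^2$. For this fixed $m$, the BSDE with the linear driver $f^m$ has a unique solution $(Y^m,Z^m)$ in $\MS^q\times\MM^q$: existence comes from Lemma~\ref{a-bsde}, and the $\MM^q$ bound from Proposition~\ref{m-d}.

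By strict convexity of $a\mapsto\frac\eta2|a\sigma-z|^2-a(b+m)$, the best response to $m$ is unique. Concretely, if $\alpha$ is optimal then the nonpositive drift term must vanish $dt\otimes dP$-a.e., which forces $\widehat\alpha$ to be given by \eqref{best-response-control} with $Z^m$.

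The consistency condition then gives \eqref{equilibrium-impact}, $m=c(\E[Z^m]/\sigma+b/(\eta\sigma^2))$. Hence $(Y^m,Z^m)$ solves \eqref{utility-mf-qbsde}, and by the uniqueness in Step 1, $Z^m=Z$. It follows that $m=m^*$ and $\widehat\alpha=\alpha^*$.
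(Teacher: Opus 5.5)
Your Step~1 and the overall structure match the paper: global well-posedness via Theorem~\ref{thm:global-wp}, then verification against $m^*$, then reduction of an arbitrary equilibrium to a solution of \eqref{utility-mf-qbsde}. However, the verification for a general $\alpha\in\MA_I$ in Step~2 fails as written.

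After localizing you have $\E[-R^{\alpha,m^*}_{\tau_n}]\ge -R_0$, and you need $\E[-R^{\alpha,m^*}_T]\ge -R_0$. Fatou's lemma applied to the nonnegative variables $-R_{\tau_n}$ gives $\E[-R_T]\le\liminf_n\E[-R_{\tau_n}]$, which is the opposite inequality. The bound $\E[-R_T]\ge\limsup_n\E[-R_{\tau_n}]$ that you wrote is reverse Fatou. It needs exactly the uniform integrability of $\{\exp(-\eta(X^\alpha_\tau-Y_\tau))\}_\tau$ that you were trying to avoid. Fatou is favourable for a \emph{nonnegative} local supermartingale. Here $-R$ is a nonnegative local \emph{sub}martingale, which can lose mass in the limit.

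That uniform integrability is not available for arbitrary $\alpha\in\MM^q$. Exponential moments of $\sup|Y|$ do not help, because $e^{-\eta X^\alpha_\tau}$ itself need not be integrable. The factorization $-R^\alpha=-R_0\,\mathcal E(-\eta\int(\alpha\sigma-Z)dB)\,e^{\eta\int\Delta\,dr}$, with $\Delta\ge0$ the gap in \eqref{candidate-value-drift}, helps only if this stochastic exponential is a true martingale, which is not known either. The same gap affects Step~3: the claim that optimality forces the drift to vanish requires $R^{\widehat\alpha}$ to be a true supermartingale.

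The paper avoids the issue by building on the observation you already made for $\alpha^*$. Since $\alpha^*\sigma-Z=(b+m^*)/(\eta\sigma)$ is deterministic, one has $e^{-\eta(X_T^{\alpha^*,m^*}-\xi)}=e^{-\eta(x-Y_0)}\Lambda_T^*$ with $\Lambda^*=\mathcal E(-\int_0^\cdot\frac{b_r+m^*_r}{\sigma_r}dB_r)$, which has moments of all orders. Under $Q^*$, $X^{\alpha,m^*}=x+\int\alpha\sigma\,dB^*$ is a true martingale for every $\alpha\in\MM^q$, by H\"older's inequality with $q>2$.

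Concavity of $U(x)=-e^{-\eta x}$ then gives $U(X_T^{\alpha}-\xi)\le U(X_T^{\alpha^*}-\xi)+\eta e^{-\eta(x-Y_0)}\Lambda_T^*(X_T^\alpha-X_T^{\alpha^*})$. The last term has zero expectation, so $J(\alpha;m^*)\le J(\alpha^*;m^*)$, using only a second-moment condition on $\alpha$ under $Q^*$. Uniqueness of the best response follows from strict concavity, which gives $X_T^\alpha=X_T^{\alpha^*}$ a.s., together with It\^o's isometry under $Q^*$. The same argument works for any deterministic $m\in L^2$, and this is what makes your Step~3 go through.
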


\begin{proof}
\textbf{Step 1: Verification of the BSDE assumptions.}
Recall from Section~\ref{sec:utility-model} that
$G_t(\rho):=\bar z(\rho):=\int_{\R}z\,\rho(dz)$.  Note that 
$1+c_t/(\eta\sigma_t^2)=c_t$, and thus  the driver $H=F+h$ has representation
\[
\begin{split}
 &F_t(y,z,\nu,\rho,\MZ)=-\frac{c_t}{\sigma_t^2}\MZ
   -\frac{c_tb_t}{\sigma_t}z
   -\frac{c_t^2b_t}{\eta\sigma_t^3}\bar z(\rho)
   -\frac{c_t^2 b_t^2}{2\eta\sigma_t^2},\\
 &h_t(\nu,\rho)
 =-\frac{c_t^2}{2\eta\sigma_t^4}|\bar z(\rho)|^2.
\end{split}
\]
By Assumption~\ref{assu:price-impact-equilibrium}, $F$ is deterministic and globally Lipschitz, with
$F_t(0)=-c_t^2b_t^2/(2\eta\sigma_t^2)$ bounded. Moreover, note that 
\[
 |\bar z(\rho)-\bar z(\rho')|\le W_1(\rho,\rho'),
 \qquad G_t(\delta_0)=0,
\]
and $h$ satisfies quadratic growth
conditions in Assumption~\ref{assu:H3}.  
Hence $(\xi,H, F,h,G)$ satisfies that
 Assumption~\ref{assu:H3}. Since $D_uF=0$, $\mathfrak R_u=|D_u\xi|$, so Assumption~\ref{assu:EI} holds. Then Theorem~\ref{thm:global-wp} yields the unique global solution
$(Y,Z)\in\MS^q\times\MM^q$.

\textbf{Step 2: Optimality and consistency.}
By Jensen's inequality and the boundedness of the coefficients, we have
\[
 \|m^*\|_{L^q}^q+\|\alpha^*\|_{\MM^q}^q
 \le C\left(1+\int_0^T\E|Z_t|^qdt\right)<\infty.
\]
Moreover, by the fact $c_t^{-1}=1-(\eta\sigma_t^2)^{-1}$, taking the expectation on both sides of \eqref{eq:utility-equilibrium-alpha-m}, we obtain the consistency condition
\[
 \E[\alpha_t^*]
 =\frac{\E[Z_t]}{\sigma_t}
  +\frac{b_t+m_t^*}{\eta\sigma_t^2}=m_t^*,
 \quad \text{and moreover, } \quad
 b_t+m_t^*=c_t\left(\frac{\E[Z_t]}{\sigma_t}+b_t\right).
\]
Thus the driver of \eqref{utility-mf-qbsde} can be written as
\[
 f_t^{m^*}(Z_t)
 =-\frac{b_t+m_t^*}{\sigma_t}Z_t
  -\frac{|b_t+m_t^*|^2}{2\eta\sigma_t^2}.
\]
Let
\[
 \Lambda_t^*:=\mathcal E\left(-\int_0^\cdot
       \frac{b_r+m_r^*}{\sigma_r}\,dB_r\right)_t,
 \qquad \frac{dQ^*}{dP}:=\Lambda_T^*,
 \qquad B_t^*:=B_t+\int_0^t\frac{b_r+m_r^*}{\sigma_r}\,dr.
\]
Thus $\Lambda^*$ is
a martingale with moments of every order and $B^*$ is a $Q^*$-Brownian
motion. For any $\alpha\in\MA=\MM^q$, by Girsanov's theorem, we have
\be\label{eq:X-qmartingale}
 X_t^{\alpha,m^*}=x+\int_0^t\alpha_r\sigma_r\,dB_r^*.
\ee
Moreover, by H\"older's inequality,
\[
\begin{split}
 \E^{Q^*}\int_0^T|\alpha_t\sigma_t|^2dt
 &\le \|\sigma\|_\infty^2
 \left(\E[(\Lambda_T^*)^{q/(q-2)}]\right)^{(q-2)/q}
 \left(T^{q/2-1}\E\int_0^T|\alpha_t|^qdt\right)^{2/q}<\infty.
\end{split}
\]
Thus $X^{\alpha,m^*}$ is a true $Q^*$-martingale and
$\E^{Q^*}[X_T^{\alpha,m^*}]=x$ for every $\alpha\in\MA$. When
$\alpha=\alpha^*$, the wealth process and the associated BSDE are
\[
 dX_t^{\alpha^*,m^*}
 =\left(Z_t+\frac{b_t+m_t^*}{\eta\sigma_t}\right)dB_t^*,
 \qquad
 dY_t=\frac{|b_t+m_t^*|^2}{2\eta\sigma_t^2}dt+Z_t\,dB_t^*,
\]
and thus
\be\label{eq:value-identity}
 e^{-\eta(X_T^{\alpha^*,m^*}-\xi)}
 =e^{-\eta(x-Y_0)}\Lambda_T^*.
\ee
On the other hand, for any $\alpha\in\MA$, by the concavity of $U$ and the above identity, we have
\be\label{ineq:concavity-gap}
 U(X_T^{\alpha,m^*}-\xi)
 \le U(X_T^{\alpha^*,m^*}-\xi)
 +\eta e^{-\eta(x-Y_0)}\Lambda_T^*
   (X_T^{\alpha,m^*}-X_T^{\alpha^*,m^*}).
\ee
By \eqref{eq:X-qmartingale}, we have
\[
 \E\!\left[\Lambda_T^*
   (X_T^{\alpha,m^*}-X_T^{\alpha^*,m^*})\right]
 =\E^{Q^*}[X_T^{\alpha,m^*}-X_T^{\alpha^*,m^*}]=0.
\]
Then by taking expectations on both sides of \eqref{ineq:concavity-gap} and using \eqref{eq:value-identity}, we obtain
\[
 J(\alpha;m^*)\le J(\alpha^*;m^*)
 =-e^{-\eta(x-Y_0)},
\]
which implies that $(\alpha^*,m^*)$ is an equilibrium.

Now we show that, for the fixed flow $m^*$, $\alpha^*$ is the
unique optimizer.  Suppose that $\alpha\in\MA$ such that
$J(\alpha;m^*) = J(\alpha^*;m^*)$, and let
$
 \Delta X :=X_T^{\alpha,m^*}-X_T^{\alpha^*,m^*}.
$
In view of \eqref{ineq:concavity-gap}, we have
\[
 \begin{split}
 D:= &U(X_T^{\alpha^*,m^*}-\xi)
       +U'(X_T^{\alpha^*,m^*}-\xi) \Delta X
       -U(X_T^{\alpha,m^*}-\xi)\\
  = &e^{-\eta(X_T^{\alpha^*,m^*}-\xi)}
    \bigl(e^{-\eta\Delta X}-1+\eta\Delta X\bigr) \ge 0,
 \end{split}
\]
On the other hand, by the assumption on $\alpha,$ $\E D =0$ and thus $D=0$ a.s. Then by strict concavity of $U$, $\Delta X=0 \text{ a.s.}$, which implies
$
 X_T^{\alpha,m^*}=X_T^{\alpha^*,m^*}, \  P\text{-a.s.}
$
Since $Q^*$ is equivalent to $P$, the above identity holds
$Q^*$-a.s. Then by It\^o's isometry, we have
\[
 0=\E^{Q^*}\!\left[
   \left|X_T^{\alpha,m^*}-X_T^{\alpha^*,m^*}\right|^2\right]
 =\E^{Q^*}\!\int_0^T
   |(\alpha_t-\alpha_t^*)\sigma_t|^2dt.
\]
It follows that $(\alpha-\alpha^*)\sigma=0$
$dt\otimes dQ^*$-a.e. and thus
$\alpha=\alpha^*$ $dt\otimes dP$-a.e.

\textbf{Step 3: Uniqueness of the equilibrium.}
Let $(\bar\alpha,\bar m)$ be another equilibrium in the stated class and
set
\[
 A_t:=\int_0^t\frac{|b_r+\bar m_r|^2}{2\eta\sigma_r^2}dr.
\]
Since $(b+\bar m)/\sigma\in L^q $, by Lemma~\ref{a-bsde} with $a=(b+\bar m)/\sigma$, $g_r(y,z,\MZ)=-\MZ$, and terminal condition
$\xi-A_T$, we have that there exists a unique
$(\bar Y,\bar Z)\in\MS^q\times\MH^q$ solving
\be\label{eq:barBSDE}
 \bar Y_t=\xi+\int_t^T\left[
 -\frac{b_r+\bar m_r}{\sigma_r}\bar Z_r
 -\frac{|b_r+\bar m_r|^2}{2\eta\sigma_r^2}
 \right]dr-\int_t^T\bar Z_r\,dB_r.
\ee
Moreover, by Proposition~\ref{m-d}, we have $\bar Z \in \MM^q$.
Then let
\[
 \alpha_t^{\bar m}:=\frac{\bar Z_t}{\sigma_t}
 +\frac{b_t+\bar m_t}{\eta\sigma_t^2}.
\]
Then by Step~2, for the given deterministic flow $\bar m$, $\alpha^{\bar m}\in\MA$ is an optimizer.  On the other hand, since $(\bar\alpha,\bar m)$ is an
equilibrium, by the uniqueness of the optimizer, we have
$
 \bar\alpha=\alpha^{\bar m},\  dt\otimes dP\text{-a.e.}
$
Then it follows by the consistency condition that 
\[
\begin{split}
 \bar m_t
 =\E[\bar\alpha_t]
 =\frac{\E[\bar Z_t]}{\sigma_t}
   +\frac{b_t+\bar m_t}{\eta\sigma_t^2} =c_t\left(\frac{\E[\bar Z_t]}{\sigma_t}
             +\frac{b_t}{\eta\sigma_t^2}\right),
\end{split}
\]
where the last equality follows from the fact that $c_t^{-1}=1-(\eta\sigma_t^2)^{-1}$.  
Substituting the expression of $\bar m$ into \eqref{eq:barBSDE}, we obtain
\[
\begin{split}
 \bar Y_t=\xi+\int_t^T\bigg[
 -\frac{c_r}{\sigma_r}
   \left(\frac{\E[\bar Z_r]}{\sigma_r}+b_r\right)\bar Z_r -\frac{c_r^2}{2\eta\sigma_r^2}
   \left|\frac{\E[\bar Z_r]}{\sigma_r}+b_r\right|^2
 \bigg]dr-\int_t^T\bar Z_r\,dB_r,
\end{split}
\]
which is precisely equation \eqref{utility-mf-qbsde}. Then by the uniqueness of the global mean-field BSDE from Step~1, we have $(\bar Y,\bar Z)=(Y,Z)$, and thus $\bar m=m^*$ and
$\bar\alpha=\alpha^*$ $dt\otimes dP$-a.e.
\end{proof}


\subsection{The mean-field equilibrium of finite-contract pricing model }
\label{sec:pricing-application}

Now we consider the finite-contract market of Section~\ref{sec:pricing-model}. Let
$\nu\in L^q([0,T],\R^d)$ be a deterministic return rate.
Given a portfolio $\alpha$ of signed contract positions, the representative client has wealth process
\begin{equation*}
 X_t^{\alpha,\nu}
 =x+\int_0^t\alpha_r^\top\nu_rdr
      +\int_0^t\alpha_r^\top\Sigma_r dB_r.
\end{equation*}
Given $\nu$, the representative client maximizes
\begin{equation*}
 J(\alpha;\nu):=
 \E\left[-\exp\left(-\eta(X_T^{\alpha,\nu}-\xi)\right)\right],
\end{equation*}
over the admissible strategy class
\begin{equation*}
 \MA(\nu):= \MA:= \MM^q([0,T],\R^d).
\end{equation*}
The intermediary provides a deterministic signed supply $s:[0,T]\to\R^d$. The equilibrium satisfies the consistency condition 
$\E[\alpha_t]=s_t$. 

For the well-posedness of the mean-field BSDE \eqref{pricing-mf-qbsde}, we assume

\begin{assumption}
\label{assu:pricing-equilibrium}
(i). Fix $q>2$ and suppose that $\eta>0$.  Let
$\Sigma:[0,T]\to\R^{d\times d}$ be deterministic and invertible, with
$\Sigma$ and $\Sigma^{-1}$ bounded, and let
$s:[0,T]\to\R^d$ be bounded and measurable.\\
(ii). Assume that
$
 \xi\in\ML^q\cap\D^{1,2},
 \ \E\int_0^T|D_u\xi|^qdu<\infty.
$
Moreover, suppose that there is a deterministic Borel function
$\varsigma:[0,T]\to[0,\infty)$ such that
\be\label{ineq:pricing-malliavin-bound}
 \int_0^T\varsigma_u^2du<\infty,
 \qquad
 \E\exp\left(\frac{|D_u\xi|^2}{\varsigma_u^2}\right)\le2
 \quad\text{for a.e. }u\in[0,T].
\ee
\end{assumption}

\begin{defi}[Mean-field pricing equilibrium]
\label{def:pricing-mf-equilibrium}
A mean-field pricing equilibrium is a pair $( \alpha^*,\nu^*)$ with
deterministic $\nu^*\in L^q([0,T],\R^d)$ and
$ \alpha^* \in\MA(\nu^*)$ such that
\[
 J(\alpha^*;\nu^*)
 =\sup_{\alpha\in\MA(\nu^*)}J(\alpha;\nu^*),
\]
and optimal demand is consistent with the prescribed supply:
\[
 \E[\alpha^*_t]=s_t
 \quad\text{for a.e. }t\in[0,T].
\]
\end{defi}

\begin{thm}\label{thm:pricing-equilibrium}
Suppose that Assumption~\ref{assu:pricing-equilibrium} holds.  Then
\eqref{pricing-mf-qbsde} has a unique solution
$(Y,Z)\in\MS^q([0,T])\times\MM^q([0,T],\R^d)$. Moreover, let 
\begin{equation*}
 \nu_t^*:=\eta\Sigma_t(\Sigma_t^\top s_t-\E[Z_t]),
 \qquad
 \alpha_t^*:=s_t+(\Sigma_t^\top)^{-1}
       (Z_t-\E[Z_t]).
\end{equation*}
Then $\nu^*\in L^q([0,T],\R^d)$,
$\alpha^*\in\MA(\nu^*)$, and
$(\alpha^*,\nu^*)$ is the unique mean-field pricing equilibrium in the
sense of Definition~\ref{def:pricing-mf-equilibrium}.
\end{thm}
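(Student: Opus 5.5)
The argument runs parallel to the proof of Theorem~\ref{thm:utility-equilibrium}, in three steps. Throughout, $G$ carries a shift by the deterministic vector $\Sigma_t^\top s_t$.

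\textbf{Step 1: well-posedness of \eqref{pricing-mf-qbsde}.} I would apply Theorem~\ref{thm:global-wp} with
\[
G_t(\rho)=\bar z(\rho)-\Sigma_t^\top s_t,\qquad F_t(y,z,\nu,\rho,\MZ)=\eta\MZ,\qquad h_t(\nu,\rho)=-\tfrac{\eta}{2}|\bar z(\rho)-\Sigma_t^\top s_t|^2 .
\]
Assumption~\ref{assu:H3} is checked as follows.
\begin{itemize}
\item $G$ is $1$-Lipschitz in $W_1$.
\item $G_t(\delta_0)=-\Sigma_t^\top s_t$ is bounded, so $G_2<\infty$.
\item $F$ is deterministic and Lipschitz with $F(0)=0$, so $D_uF=0$ and the bound \eqref{law-assu:K0} reduces to $\E\int_0^T|D_u\xi|^qdu+\E|\xi|^q<\infty$.
\item $h$ is deterministic. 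The difference of squares gives $|h_t(\rho_1)-h_t(\rho_2)|\le \tfrac{\eta}{2}(\|\rho_1\|_1+\|\rho_2\|_1+2|\Sigma^\top s|)W_1(\rho_1,\rho_2)$.
\item $|h_t(\delta_0,\rho)|\le \eta|\Sigma^\top_ts_t|^2+\eta\|\rho\|_1^2$, so $k$ is bounded.
\end{itemize}
Since $D_uF=0$, we have $\mathfrak R_u=|D_u\xi|$, and \eqref{ineq:pricing-malliavin-bound} is exactly Assumption~\ref{assu:EI}. Theorem~\ref{thm:global-wp} then yields a unique $(Y,Z)\in\MS^q\times\MM^q$. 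Consequently $\nu^*\in L^q$ by Jensen's inequality and boundedness of $\Sigma,s$, and $\alpha^*\in\MM^q$. The consistency condition $\E[\alpha^*_t]=s_t$ is immediate.

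\textbf{Step 2: optimality for the fixed quote $\nu^*$.} With $\theta^*=\Sigma^{-1}\nu^*=\eta(\Sigma^\top s-\E[Z])$, the driver of \eqref{pricing-mf-qbsde} equals $f^{\nu^*}_t(Z_t)=-\theta^*_tZ_t-|\theta^*_t|^2/(2\eta)$ from \eqref{pricing-fixed-driver}. Moreover $\alpha^*=(\Sigma^\top)^{-1}(Z+\theta^*/\eta)$ is the minimizer in \eqref{pricing-fixed-control}.

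Set $dQ^*/dP=\mathcal E(-\int\theta^*dB)_T$. Since $\theta^*\in L^q$ is deterministic, this density has moments of all orders, and $B^*=B+\int\theta^*dr$ is a $Q^*$-Brownian motion. For $\alpha\in\MM^q$ we have $X^{\alpha,\nu^*}=x+\int\alpha^\top\Sigma\,dB^*$, which is a true $Q^*$-martingale by the same H\"older argument as before.

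It\^o's formula gives the identity $e^{-\eta(X_T^{\alpha^*}-\xi)}=e^{-\eta(x-Y_0)}\Lambda_T^*$. Combining the concavity inequality \eqref{ineq:concavity-gap} with $\E^{Q^*}[X_T^\alpha-X_T^{\alpha^*}]=0$ shows that $\alpha^*$ is optimal. Strict concavity, It\^o's isometry under $Q^*$, and invertibility of $\Sigma$ then give uniqueness of the optimizer.

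\textbf{Step 3: uniqueness of the equilibrium.} Let $(\bar\alpha,\bar\nu)$ be another equilibrium and set $\bar\theta=\Sigma^{-1}\bar\nu\in L^q$. I would solve the linear BSDE with driver $-\bar\theta z-|\bar\theta|^2/(2\eta)$ by Lemma~\ref{a-bsde}, taking $a=\bar\theta$. Proposition~\ref{m-d} then puts $\bar Z\in\MM^q$. By Step 2 applied to $\bar\nu$, together with uniqueness of the optimizer, $\bar\alpha=(\Sigma^\top)^{-1}(\bar Z+\bar\theta/\eta)$.

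The consistency condition $\E\bar\alpha=s$ forces $\bar\theta=\eta(\Sigma^\top s-\E\bar Z)$. Substituting this back shows that $(\bar Y,\bar Z)$ solves \eqref{pricing-mf-qbsde}, so $(\bar Y,\bar Z)=(Y,Z)$ and hence $(\bar\alpha,\bar\nu)=(\alpha^*,\nu^*)$.

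The main point needing care is the optimality argument in Step 2, specifically verifying that the stochastic integrals are true martingales under $Q^*$. This works because $\theta^*$ is deterministic and in $L^q\subset L^2$, which makes $\Lambda^*$ have all moments. I would check this first, and then check that the vector-valued form of Step 2 goes through.
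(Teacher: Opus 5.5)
Your proposal is correct and follows the paper's proof essentially step for step. The three steps are the same: global well-posedness via Theorem~\ref{thm:global-wp} with the same $(G,F,h)$; optimality for the fixed quote via the change of measure to $Q^*$ and the concavity/strict-concavity argument from Theorem~\ref{thm:utility-equilibrium}; and uniqueness by solving the linear BSDE for $\bar\theta$ through Lemma~\ref{a-bsde} and Proposition~\ref{m-d}, then invoking uniqueness of \eqref{pricing-mf-qbsde}. Your explicit check of Assumption~\ref{assu:H3} (the difference-of-squares bound for $h$, and $D_uF=0$ so that $\mathfrak R_u=|D_u\xi|$) supplies the details the paper dismisses as ``easy to see.''
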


\begin{proof}
For $\rho\in\op_1(\R^d)$, let
$\bar z(\rho):=\int_{\R^d}z\rho(dz)$, and let 
\[
 G_t(\rho)=\bar z(\rho)-\Sigma_t^\top s_t,\qquad
 F_t(y,z,\nu,\rho,\MZ)=\eta \MZ,
 \qquad h_t(\nu,\rho)
 =-\frac{\eta}{2}|\bar z(\rho)-\Sigma_t^\top s_t|^2.
\]
Then it is easy to see that these coefficients satisfy
Assumption~\ref{assu:EI}, and thus by Theorem~\ref{thm:global-wp}, there exists a unique solution $(Y,Z)$ to \eqref{pricing-mf-qbsde} in $\MS^q\times\MM^q$.

Now we check consistency and optimality. Taking expectations on both sides of the definition of $\alpha^*$ and we obtain the consistency condition. For the optimality, we first show that $\alpha^*$ is the unique optimal control given the pricing flow $\nu^*$, which implies $(\alpha^*, \nu^*)$ is a mean-field pricing equilibrium in the sense of Definition~\ref{def:pricing-mf-equilibrium}. Since $Z\in\MM^q$, by Jensen's
inequality, we see that  $t\mapsto\E[Z_t]\in L^q([0,T],\R^d)$. Hence
$\nu^*\in L^q$ and
$\alpha^*\in\MM^q([0,T],\R^d)$. Let  
$$
 \theta_t^*:=\Sigma_t^{-1}\nu_t^*
 =\eta(\Sigma_t^\top s_t-\E[Z_t]), \qquad \frac{dQ^*}{dP}:= \ME\left(-\int_0^\cdot \theta_r^* dB_r\right)_T, \qquad B_t^*:=B_t+\int_0^t\theta_r^*dr.
$$
Then the state process $X^{\alpha^*,\nu^*}$ satisfies
\[
 dX_t^{\alpha^*,\nu^*}
   =(Z_t-\E[Z_t]+\Sigma_t^\top s_t)^\top dB_t^*.
\]  
Then following a similar martingale optimality argument as in the proof of Theorem~\ref{thm:utility-equilibrium}, we see that $\alpha^*$ is the unique optimal control given $\nu^*$. 



For the uniqueness of equilibria, let $(\bar\alpha,\bar\nu)$ be another equilibrium in the sense of
Definition~\ref{def:pricing-mf-equilibrium} and set
$\bar\theta=\Sigma^{-1}\bar\nu$. Note that $|\bar\theta|^2 \in \BM^q$. Then by Lemma \ref{a-bsde} and Proposition \ref{m-d}, the following BSDE
\be\label{eq:bar-pricing-bsde}
 \bar Y_t=\xi+\int_t^T\left[
 -\bar\theta_r\bar Z_r-\frac{|\bar\theta_r|^2}{2\eta}
 \right]dr-\int_t^T\bar Z_r dB_r
\ee
has a unique solution in $\MS^q\times\MM^q$.  Its unique
optimal portfolio is
\[
 \bar\alpha_t=(\Sigma_t^\top)^{-1}
       \left(\bar Z_t+\frac{\bar\theta_t}{\eta}\right).
\]
By the consistency condition, we have
\[
 \bar\theta_t=\eta(\Sigma_t^\top s_t-\E[\bar Z_t]).
\]  
Substitution the above into \eqref{eq:bar-pricing-bsde} and we have that
$(\bar Y,\bar Z)$ solves \eqref{pricing-mf-qbsde}.  By uniqueness of
solutions to \eqref{pricing-mf-qbsde} we have
$(\bar Y,\bar Z)=(Y,Z)$, and thus
$\bar\nu=\nu^*$ and $\bar\alpha= \alpha^*$.
\end{proof}


\begin{rem}[Convergence of the multiple-player pricing model to the mean-field limit]
\label{rem:pricing-finite-convergence}
Fix the parameters $T,d,q,\eta,\Sigma$, the fixed supply $s$, and a
generic pair $(B,\xi)$ as in
Assumption~\ref{assu:pricing-equilibrium}. Suppose that on a probability space $(\Omega,\mathcal{F},\mathbb{P})$, $(B^i,\xi^i)_{i\ge1}$ are independent copies of $(B,\xi)$.  In
particular, $\xi^i$ is measurable with respect to the augmented filtration
generated by $B^i$.

For every $N$, suppose that an  equilibrium
$(\nu^N,\widehat{\boldsymbol\alpha}^N)$ in the sense of
Definition~\ref{def:finite-client-pricing-equilibrium} exists and is represented by the  multidimensional qBSDE 
\eqref{finite-client-fixed-control}--\eqref{finite-client-equilibrium-bsde}, which is assumed to be well-defined. More precisely, for each $i\le N$, suppose that there are
\[
 Y^{i,N}\in\MS^q([0,T]),\qquad
 (Z^{i,j,N})_{j=1}^N
 \in\MH^q([0,T],\R^{Nd}),
\]
such that $(Y^{i,N},(Z^{i,j,N})_{j=1}^N)_{i=1}^N$ solves
\eqref{finite-client-equilibrium-bsde}, the feedback controls in
\eqref{finite-client-fixed-control} are admissible, and the corresponding
martingale optimality verification is valid.  
Let
\[
 \overline Z_t^N:=\frac1N\sum_{i=1}^N Z_t^{i,i,N}.
\]
For each $i\ge1$, let $(Y^i,Z^i)$ be the
solution to the mean-field BSDE driven by $(B^i,\xi^i)$ and same deterministic
coefficients, that is,
\[
\begin{split}
 Y_t^i= \xi^i+\int_t^T\left[
 \eta\bigl(\E[Z_r]-\Sigma_r^\top s_r\bigr)^\top Z_r^i
 -\frac{\eta}{2}
  \bigl|\E[Z_r]-\Sigma_r^\top s_r\bigr|^2
 \right]dr
 -\int_t^T (Z_r^i)^\top dB_r^i.
\end{split}
\]
Then $(B^i,\xi^i,Y^i,Z^i)_{i\ge1}$ are independent copies of
$(B,\xi,Y,Z)$. Let
\[
 r_N:=\frac1N\sum_{i=1}^N
 \E\int_0^T|Z_t^{i,i,N}-Z_t^i|^2dt,
\]
and assume that $r_N\to0$.

Consider the client return rates in the multiple-player and mean-field equilibria:
\[
 \nu_t^N=\eta\Sigma_t(\Sigma_t^\top s_t-\overline Z_t^N),
 \qquad
 \nu_t^*=\eta\Sigma_t(\Sigma_t^\top s_t-\E[Z_t]).
\]
To show convergence of these return rates, let
\be\label{eq:pricing-convergence}
 \overline Z_t^N-\E[Z_t]
 =\frac1N\sum_{i=1}^N(Z_t^{i,i,N}-Z_t^i)
  +\frac1N\sum_{i=1}^N(Z_t^i-\E[Z_t]).
\ee
Note that the first term on the right-hand side can be controlled by $r_N$. For the second term, 
by Jensen's inequality and independence, we have
\[
 \E\int_0^T\left|
 \frac1N\sum_{i=1}^N(Z_t^i-\E[Z_t])
 \right|^2dt
 =\frac1N\E\int_0^T|Z_t-\E[Z_t]|^2dt.
\]
Then in view of \eqref{eq:pricing-convergence}, we have 
\[
 \E\int_0^T|\nu_t^N-\nu_t^*|^2dt
 \le 2\eta^2\|\Sigma\|_\infty^2
 \left(r_N+\frac1N\E\int_0^T|Z_t-\E[Z_t]|^2dt\right)
 \longrightarrow0.
\]
In this case, define the independent mean-field controls by
\[
 \alpha_t^{*,i}:=s_t+(\Sigma_t^\top)^{-1}
 (Z_t^i-\E[Z_t]).
\]
and the corresponding multiple-player strategy by 
\[
 \widehat\alpha_t^{i,N}
 =s_t+(\Sigma_t^\top)^{-1}
   (Z_t^{i,i,N}-\overline Z_t^N).
\]
Note that
\[
\begin{split}
 &\frac1N\sum_{i=1}^N\left|
 (Z_t^{i,i,N}-Z_t^i)-(\overline Z_t^N-\E[Z_t])
 \right|^2\\
 &\quad=\frac1N\sum_{i=1}^N|Z_t^{i,i,N}-Z_t^i|^2
 -\Big|\frac1N\sum_{i=1}^N(Z_t^{i,i,N}-Z_t^i)\Big|^2
 +\Big|\frac1N\sum_{i=1}^N(Z_t^i-\E[Z_t])\Big|^2.
\end{split}
\]
Thus by independence and the above representation, we have 
\[
 \frac1N\sum_{i=1}^N\E\int_0^T
 |\widehat\alpha_t^{i,N}-\alpha_t^{*,i}|^2dt
 \le \|\Sigma^{-1}\|_\infty^2
 \left(r_N+\frac1N\E\int_0^T
 |Z_t-\E[Z_t]|^2dt\right)
 \longrightarrow0.
\] 
Note that the existence and verification of the multiple-player equilibria, as well as
the convergence $r_N\to0$, are additional assumptions here and do not follow directly 
from Assumption~\ref{assu:pricing-equilibrium}. Related dimension-free
well-posedness and backward propagation-of-chaos estimates are established in
\cite{HorstSchmidekZhang26} for weakly interacting systems under additional explicit smallness assumptions.



\end{rem}

\section{Conclusion}
\label{sec:conclusion}

We establish local existence and uniqueness for a class of mean-field
quadratic BSDEs with interactions of the forms $\E[Z_t]^\top Z_t$ and
$|\E[Z_t]|^2$, under the stated $L^q$ and Malliavin regularity assumptions,
$q>2$. Freezing the law yields deterministic coefficients in the cross term,
while a BSDE--ODE decomposition handles the additional quadratic law term.
Under an exponential integrability condition on the Malliavin derivatives, entropy estimates extend the local solution to any
finite horizon. The blow-up example shows why polynomial assumptions alone
do not suffice for global solvability.

Under the respective model assumptions, these results yield unique mean-field
equilibria for an exponential-utility portfolio game with price impact and
an intermediated market for client-specific risk-transfer contracts with
exogenous supply. The BSDE solutions determine optimal positions and the
equilibrium impact or premium. 



\bibliography{BSDEs.bib}

\end{document}